\let\vec\mathbf
\newtheorem{thm}{Theorem}[section]
\newtheorem{ex}[thm]{Example}
\newtheorem{prop}[thm]{Proposition}
\newtheorem{dfn}[thm]{Definition}
\newtheorem{rmk}[thm]{Remark}
\newcommand{\UU}{\mathfrak{U}}
\newcommand{\FF}{\mathfrak{F}}
\newcommand{\TT}{\mathscr{T}}
\newcommand{\rr}{\vec{r}_t}
\newcommand{\T}{\mathcal{O}}
\newcommand{\R}{\mathbb{R}}
\newcommand{\F}{\mathbb{F}}
\newcommand{\E}{\mathbb{E}}
\newcommand{\p}{\mathbb{P}}
\newcommand{\N}{\mathbb{N}}
\numberwithin{equation}{section}
\newcommand{\dd}{\mathrm{d}}
\newcommand{\dx}{\,\mathrm{d}x}
\newcommand{\dt}{\,\mathrm{d}t}
\newcommand{\bfr}{\mathbf r}
\newcommand{\bfU}{\mathbf U}
\newcommand{\EE}{\mathscr E}
\newcommand{\ee}{\mathrm e}
\newcommand{\bn}{\mathbf n}
\newcommand{\VV}{\mathcal V}
\newcommand{\bU}{\mathbf{U}}
\newcommand{\bu}{\mathbf u}
\newcommand{\bh}{\mathbf h}
\newcommand{\bm}{\mathbf m}
\newcommand{\bV}{\mathbf V}
\newcommand{\diff}{\mathrm{d}}
\newcommand{\bx}{\mathbf x}
\newcommand{\bg}{\mathbf{g}}
\newcommand{\diag}{\mathrm{diag}}
\newcommand{\IdxM}{\mat{\mathbb I_4}}
\newcommand{\bvu}{\underline{\bu}}
\newcommand{\bvf}{\underline{\bf}}
\newcommand{\bfnum}{\mathbf f^{\mathrm{num}}}
\newcommand{\armean}[1]{\{\{#1\}\}}
\newcommand{\logmean}[1]{\{\{#1\}\}_{\log}}
\newcommand{\jump}[1]{\llbracket#1\rrbracket}
 \newcommand{\mat}[1]{\underline{\underline{#1}}}
 \newcommand{\norm}[1]{\|#1\|}
\title{Discontinuous Galerkin methods for the complete stochastic Euler equations}
\author{Dominic Breit}
\author{Thamsanqa Castern Moyo}
\author{Philipp  \"Offner}
\address{TU Clausthal, Institute of Mathematics, Erzstra\ss e 1, 38678 Clausthal-Zellerfeld, Germany.}
\email{dominic.breit@tu-clausthal.de}
\email{thamsanqa.castern.moyo@tu-clausthal.de}
\email{mail@philippoeffner.de}
\begin{document}
\maketitle
\begin{abstract}

In recent years, stochastic effects have become increasingly relevant for describing fluid behaviour, particularly in the context of turbulence. The most important model for inviscid fluids
in computational fluid dynamics are the Euler equations of gas dynamics which we focus on in this paper. 
To take stochastic effects into account, we  incorporate  a stochastic forcing term in the momentum equation of the Euler system. 
To solve the extended system, we apply an entropy dissipative discontinuous Galerkin spectral element method including the Finite Volume setting,  adjust it to the stochastic Euler equations and analyze its convergence properties.
Our analysis is grounded in the concept of \textit{dissipative martingale solutions}, as recently introduced by Moyo (J. Diff. Equ. 365, 408-464, 2023). Assuming no vacuum formation and bounded total energy,  we proof that our scheme converges in law to a dissipative martingale solution. During the lifespan of a pathwise strong solution, we achieve convergence of at least order 1/2, measured by the expected $L^1$ norm of the relative energy. The results built a counterpart of corresponding results in the deterministic case.
In numerical simulations, we show the robustness of our scheme, visualise different stochastic realizations and analyze our theoretical findings.

\end{abstract}
\section{Introduction}

The Euler equations of gas dynamics are one of -if not the- most studied system for describing inviscid, compressible fluid flow. 
They contain the three conservation laws  of mass, momentum, and energy and serve as foundational tools for understanding and predicting fluid behavior in applications like aerodynamics, astrophysics, and meteorology. The system has been thoroughly investigated from an analytical and numerical perspective. Numerically, many different schemes have been developed to solve the Euler equations in a highly efficient way. The aim is always to obtain the most accurate result for real-world problems. From a modeling perspective, the inclusion of uncertainty and/or to  describe 
turbulent effects, researchers have recently started to introduce stochastic forcing into the Euler equations (an other models as well) to describe more realistic scenarios. \\
In this paper, we follow this trend and study 
the complete Euler equations describing the motion of a general inviscid, compressible and heat-dependent fluid subject to a stochastic perturbation in the momentum equation. In the variables density $\varrho$, momentum $\vec m$ and specific internal energy $e$, the considered system reads then as 
\begin{align}\label{Euler}
\begin{aligned}
\dd \varrho + \mathrm{div}_{x}(\vec m)\, \dd t &=0 \quad \text{in}\, Q,\\
\dd \vec m+ \mathrm{div}_x\Big(\frac{\vec m \otimes \vec m}{\varrho}\Big)\, \dd t + \nabla_x p\,\dd t&=\varrho \phi \,\dd W\quad \text{in}\, Q,\\
\dd (\varrho e) + \mathrm{div}_x\left(\varrho e\frac{\vec m}{\varrho}\right)\dd t &=-p\,\mathrm{div}\Big(\frac{\vec m}{\varrho}\Big)\dd t \quad \text{in}\, Q,\end{aligned}
\end{align}
where $Q=(0,T)\times\mathcal O$ for some $T>0$ and $\mathcal O\subset\R^n$, $n=1,2,3$, bounded.
Here,  $W$ denotes a (cylindrical) Wiener process on a filtered probability space $(\Omega, \FF, (\FF_t)_{t\geq 0},\p)$ and $\phi$ the noise coefficient (a Hilbert-Schmidt operator\footnote{We refer to Section \ref{sec:prob} for the precise probability framework. }). 
The key idea behind adding the stochastic perturbation to the deterministic Euler system is to take physical, empirical or numerical uncertainties and thermodynamical fluctuations into account. Additionally, it is also commonly used to model turbulent effects, see \cite{Bi,MiRo} and the references therein.   
%The stochastic perturbation can take into account physical, empirical or numerical uncertainties and thermodynamical fluctuations. It is also commonly used to model turbulent effects present in fluid flows. \PO{References for this??}
However, it is well-known that the deterministic counterpart of \eqref{Euler} is well-posed only locally in time and solutions can develop shocks \cite{Sm}. Recently, also the precise behaviour of singularities could be described \cite{BSV}. Due to the technique of convex integration, there has been, and continues to be, significant interest in constructing scenarios where the deterministic Euler equations yield multiple entropy solutions from the same initial data. See, for example, \cite{ChDeO,DeSz} for the isentropic Euler system and \cite{FKKM} for the complete Euler equations.
%In the last 15 years a serious of results based on the method of convex integration appeared showing that multiple \PO{entropy} solutions can emanate from the same data, see \cite{DeSz,ChDeO} for the isentropic Euler system and \cite{FKKM} for the complete Euler system.
This is not only valid in the deterministic case but also if stochastic perturbations are added, cf.  \cite{BFHcv} for the isentropic Euler equations and in \cite{ChFeFl} for the complete Euler equations.
%$It has been shown in \cite{BFHcv} for the isentropic Euler equations and in \cite{ChFeFl} for the complete Euler equations that the same phenomena occur under stochastic perturbations.
The only chance to obtain globally defined solutions which obey the laws of thermodynamics in a stable way is the concept of measure-valued solutions. 
 It was already introduced (in the deterministic case for incompressible fluids) in the 80's \cite{DiP,DiMa} but attracted a lot of attention again recently due to the weak-strong uniqueness principle available for dissipative solutions \cite{HoFeBr,Bren,FEJB,KrZa,Neu} and to demonstrate convergence of structure preserving methods \cite{feireisl2020convergence,feireisl2021numerical} within this framework.   %Corresponding results for the complete Euler equations can be found in \cite{HoFeBr,FEJB,KrZa,Neu}. 
Weak-strong uniqueness  results were  recently extended to the stochastic case  by the second author in \cite{Mo} where he showed the existence of a dissipative martingale solution to \eqref{Euler}. Such a solution is weak in the probabilistic sense which means the probability space is not a priori given but becomes an integral part of the solution. \\
Meanwhile from the numerical perspective, there has been a long history on solving the deterministic Euler equations. 
An instrumental method in this context is the approach pioneered by Godunov \cite{godunov1959finite}, which can be conceptualised as a finite volume (FV) technique that tackles exact Riemann problems at each cell interface. Godunov's method has served as the foundation for numerous extensions and has been employed as the groundwork for the development of higher-order methods, exemplified in works such as \cite{harten1983upstream,roe1981approximate,toro1997riemann}. Despite its historical significance, Godunov's method continues to attract attention, with recent studies showcasing its convergence through dissipative weak (DW) solutions of the Euler equations \cite{LuYu}. Additionally, novel error estimates for the multidimensional compressible Euler system have been introduced in \cite{LuSh}. Notably, the results of \cite{LuYu} build upon convergence outcomes for several first-order FV schemes, including the (local) Lax-Friedrichs method, as discussed in \cite{feireisl2020convergence}. For a comprehensive overview of recent convergence results, we direct readers to \cite{feireisl2021numerical} for further details.
In parallel with low-order methods, contemporary applications involve high-order methods for solving hyperbolic conservation laws, particularly the Euler equations. Among these methods, the Discontinuous Galerkin (DG) method stands out as one of the most favoured within the hyperbolic community. Originating from the work of Reed and Hill in 1973, initially devised for solving the hyperbolic neutron transport equation in nuclear reactors \cite{ReHi}, DG has evolved over the years. Mathematical foundations were solidified in the 90s \cite{CoKa, HeWa}, and more recent advancements have focused on ensuring structure-preserving (or property-preserving) properties, as pointed out in \cite{KuHa, oeff2023} and references therein. In the context of the Euler equation, recent convergence results via dissipative weak solutions have been demonstrated for DG in \cite{LuOe}. This marked a significant milestone as the first convergence result for a high-order method within the DW framework. Later one these results were extended to other structure-preserving Finite Element (FE) based schemes  in \cite{AbLu, KuLu}.
All convergence results using the DW framework are made on the assumption that the numerical solution $(\varrho^h,\vec m^h,\mathcal E^h)$, usually formulated in terms of the total energy $\mathcal E^h$ rather than the internal energy, satisfies for some $K>0$
 \begin{align}\label{ass:mainA}
\inf_{t\in[0,T],x\in\T}\varrho^h(t,x)\geq \frac{1}{K},\quad \sup_{t\in[0,T],x\in\T}\mathcal E^h(t,x)\leq K,
\end{align}
uniformly in $h$. Here $h$ is the corresponding grid parameter and the superscript denotes that the quantity is the numerical approximation, cf.  \cite{AbLu,feireisl2021numerical, KuLu, LuYu}. The physical interpretation of \eqref{ass:mainA} is that the density is not reaching vacuum and we have no blow up in terms of energy (and so also the speed is bounded). 
In the stochastic case\footnote{Here, we refer that we consider stochastic forcing inside the equations not uncertain initial/boundary data or uncertainty inside the flux where other techniques like Monte-Carlo simulations, general Polynomial chaos or stochastic collocation methods are used and heavily considered \cite{AbMi}.}, the only available result -up to the authors knowledge- is the paper \cite{ChKo}, where a finite volume scheme is considered for the barotropic Euler equations. The authors from \cite{ChKo} work under assumption \eqref{ass:mainA}. Unfortunately, this is not realistic for stochastic PDEs. Due to the Gaussian character inherited from the driving Wiener process solutions are in general not bounded in probability, not even locally in time. This is even the case for linear problems with smooth data which are globally well-posed, see, for instance, \cite{DP}. \\
Due to this we replace \eqref{ass:mainA} by the following assumption which is on the continuous level certainly satisfied by a strong solution (see Definition \ref{def:compstrong}) and thus seems much more plausible. We suppose that there is a stopping time $\mathfrak t$ with $\p(\mathfrak t>0)=1$ and a deterministic constant $K>0$ such that $\p$-a.s.
\begin{align}\label{ass:mainB}
\inf_{t\in[0,\mathfrak t],x\in\T}\varrho^h(t,x)\geq \frac{1}{K},\quad \sup_{t\in[0,\mathfrak t],x\in\T}\mathcal E^h(t,x)\leq K,
\end{align}
uniformly in $h$. Heuristically speaking, we assume that \eqref{ass:mainB} is only fulfilled up to some time $\mathfrak t$ which exists with probability  $\p(\mathfrak t>0)=1$.
Let us explain some heuristics behind this assumption. If the velocity gradient is bounded in space-time the standard maximum principle for the continuity equation \eqref{Euler}$_1$ implies strict positive of the density. However such a bound on the velocity field can never be uniform in probability (as the stochastic forcing is not uniformly bounded). Hence, for any given deterministic time $T>0$, any positiv deterministic bound can be deceeded with a positive probability. So, out of all possible paths the solution can attain some of them will not satisfy \eqref{ass:mainA}. One can consequently either neglect certain paths or adapt the time horizon. For any given path there is a (possibly small) time for which the movement of the noise is small. Thus \eqref{ass:mainB} holds provided the solution is sufficiently smooth in space.\\
 Based on this assumption we are able to prove the consistency and convergence\footnote{Convergence means in such context: Convergence in law up to a subsequence and it is based on the stochastic compactness method employing Jakubowski's extension of the Skorokhod representation theorem from \cite{Jaku} (due to the defect measures, which appear in the nonlinear term when passing to the limit in the discrete solution, the classical version for Polish spaces does not apply). A main feature is that we have to include the stopping from \eqref{ass:mainA} in the path space which creates several technical difficulties and will be explained in all details in the following chapters.} between a  DG discretization (including the FV case) of \eqref{Euler} and the dissipative martingale solution from \cite{Mo}. The convergence result  can be improved in the life-space of a strong solution to \eqref{Euler} which exists locally in time up to a stopping time
$\mathfrak s$ (which we expect to be significantly smaller than $\mathfrak t$ in \eqref{ass:mainB}), cf. Definition \ref{def:compstrong}, and is defined on a given stochastic basis. 
In this case we obtain convergence of order $1/2$, where the error is measured as the expectation of the $L^1$ norm of the relative energy between discrete and continuous variables evaluated at the final time. 
%Finally, we obtain convergence in law up to a subsequence, cf. Theorem
%\ref{thm:main1}. It is based on the stochastic compactness method employing Jakubowski's extension of the Skorokhod representation theorem from \cite{Jaku} (due to the defect measures, which appear in the nonlinear term when passing to the limit in the discrete solution, the classical version for Polish spaces does not apply). A main feature is that we have to include the stopping from \eqref{ass:mainA} in the path space which creates several technical difficulties.
%
%The result from Theorem \ref{thm:main1} can be improved in the life-space of a strong solution to \eqref{Euler}. Such a solution exists locally in time up to a stopping time
%$\mathfrak s$ (which we expect to be significantly smaller than $\mathfrak t$ in \eqref{ass:mainB}), cf. Definition \ref{def:compstrong}, and is defined on a given stochastic basis. In this case we obtain convergence of order $1/2$, where the error is measured as the expectation of the $L^1$ norm of the relative entropy between discrete and continuous variables evaluated at the final time. 
The result is based on the relative entropy of the discrete solution and provides a stochastic counterpart of the deterministic case. 
Therefore, the paper is organized as follows:
In Section \ref{sec:cr}, we introduce the notation, the mathematical framework and the discontinuous Galerkin (DG) method under consideration. We give a focus on the stochastic part and how we are dealing with it.
 Then, in Section \ref{se_concept} we repeat the definition of dissipative measure-valued martingale solutions from \cite{Mo}, strong solutions in the probabilistic and PDE sense, and formulate the main results about consistency and convergence results. The chapter should give a detailed introduction inside the topic and provide the main results in this context.  In Section \ref{mvsproof}, the consistency results are proven in all technical details where in Section \ref{se_Convergence} the focus is on the convergence result. In addition, in Section  \ref{sec_numerical} we show -up to the authors knowledge- for the first time numerical results using the extended Euler system \eqref{Euler}. Furthermore, we verify our theoretical findings and discuss problems which may rise as well inside the numerical experiments.
A conclusion and an outlook in Section \ref{se_outlock} finishes this paper.

\section{Mathematical framework}\label{Asec}
In this section, we introduce the stochastic Euler system \eqref{Euler} and describe different formulations for it. On the continuous level (for sufficiently smooth solutions) they are equivalent, but from a numerical perspective one is more  favorable than the other as already can be seen in the deterministic case in \cite{abgrall2018,abgrall2010}.  Next, we provide a brief introduction to the probability framework; for further details, see \cite[Chapter 2]{FrBrHo}. Finally, we introduce the discontinuous Galerkin (DG) method under consideration the well-known DG spectral element method using summation-by-parts operators and a flux differencing approach. The method is well investigated and well-used applied for the deterministic Euler equations, cf.  \cite{chen2017,He,LuOe,renac_2019} and references therein. We adjust it to the stochastic Euler system.
\subsection{Constitutive relations}\label{sec:cr}
The fluid model is described by means of three basic state variables: the mass density $\varrho=\varrho(t,x)$, the momentum $\vec m = \vec m(t,x)$, and the (absolute) temperature $\vartheta=\vartheta(t,x)$, where $t$ is the time, $x$ is the space variable (Eulerian coordinate system). The time evolution of the fluid flow is governed by the system of partial differential equations given by
\begin{align}\label{EulerB}
\begin{aligned}
\dd \varrho + \mathrm{div}_{x}\vec m\, \dd t &=0 \quad \text{in}\, Q,\\
\dd \vec m+ \mathrm{div}_x\Big(\frac{\vec m \otimes \vec m}{\varrho}\Big)\, \dd t + \nabla_x p\,\dd t&=\varrho \phi \,\dd W\quad \text{in}\, Q,\\
\dd e + \mathrm{div}_x\left(e\frac{\vec m}{\varrho}\right)\dd t &=-p\,\mathrm{div}\Big(\frac{\vec m}{\varrho}\Big)\dd t \quad \text{in}\, Q
,\end{aligned}
\end{align}
describing: the balance of mass, momentum and internal energy, respectively. Here, $p(\varrho,\vartheta)$ denotes pressure and $e=e(\varrho,\vartheta)$ is the internal energy. 
In the theoretical part, we focus on the two-dimensional Euler system \eqref{EulerB}. However, the same arguments apply in both one and three dimensions\footnote{In Section \ref{sec_numerical}, we demonstrate also some results in one dimension for completeness.}. The total energy is the sum of the kinetic and internal energy, respectively, that is
\begin{align*}
\mathcal E=\mathcal E(\varrho,\vartheta)=\frac{1}{2}\frac{|\vec m|^2}{\varrho}+\varrho e(\varrho,\vartheta).
\end{align*}
Applying formerly It\^{o}'s formula to the process $t\mapsto \frac{1}{2}\frac{|\vec m|^2}{\varrho}$, the internal energy equation 
in \eqref{EulerB} can equivalently be written in terms of the total energy  as\footnote{In the case of a cylindrical Wiener process, see \eqref{Dwiener} below, one has $\|\phi\|^2_{\ell_2}:=\sum_{k\in\N}|\phi e_k|^2$. Note that this is a function depending on space.}
\begin{equation}\label{eq_total}
\dd \mathcal E + \mathrm{div}_x\left(\left(\mathcal E+p\right)\frac{\vec m}{\varrho}\right)\dd t =\frac{1}{2}\varrho\|\phi\|_{\ell_2}^2 \,\dd t+ \phi\cdot \vec m\,\dd W.
\end{equation}
 For completeness, the system (\ref{EulerB}) is supplemented by a set of constitutive relations characterising the physical principles of a compressible inviscid fluid. In particular, we assume that the pressure $p(\varrho,\vartheta)$ and the internal energy $e =e(\varrho,\vartheta)$ satisfy the caloric equation of state
\begin{equation}\label{caloric}
    p=(\gamma-1)\varrho e, 
\end{equation}
where $\gamma>1$ is the adiabatic constant. In addition, we suppose that the absolute temperature $\vartheta$ satisfies the Boyle-Mariotte thermal equation of state:
\begin{equation}\label{boyle}
    p =\varrho\vartheta \quad \mathrm{yielding}\quad e= c_v\vartheta, c_v =\frac{1}{\gamma-1}.
\end{equation}
Finally, we suppose that the pressure $p=p(\varrho, \vartheta)$, the specific internal energy $e =e(\varrho,\vartheta)$, and the specific entropy  $ s =  s(\varrho,\vartheta)$ are interrelated through Gibbs' relation

\begin{equation}\label{gibbs}
   \vartheta D s (\varrho,\vartheta) = D e(\varrho,\vartheta)+ p(\varrho,\vartheta)D\left(\frac{1}{\varrho}\right).
\end{equation}
If $p,e,s$ satisfy (\ref{gibbs}), in the context of \textit{smooth} solutions to (\ref{Euler}), the second law of thermodynamics is enforced through the entropy balance equation

\begin{equation}\label{entrB}
  \dd (\varrho  s (\varrho,\vartheta))+\mathrm{div}_x( s(\varrho,\vartheta)\vec m)\, \dd t =0, 
\end{equation}
where $s(\varrho,\vartheta)$ denotes the (specific) entropy and is of the form
\begin{equation}\label{entropy}
    s(\varrho,\vartheta)=\log(\vartheta^{c_v})-\log(\varrho).
\end{equation}
Denoting by $$S=S(\varrho,\vartheta)=\varrho s(\varrho,\vartheta)=\varrho\log(\vartheta^{c_v})-\varrho\log(\varrho)$$ the total entropy, equation \eqref{entropy} can also be written as
\begin{equation}\label{EntrB}
  \dd S (\varrho,\vartheta)+\mathrm{div}_x\Big(S(\varrho,\vartheta)\frac{\vec m}{\varrho}\Big)\, \dd t =0.
\end{equation}
For weak solutions, the equalities in (\ref{entrB}) and (\ref{EntrB}) no longer hold, the entropy balance is given as an inequality, for more details see \cite{SBGD}. 
Finally, system \eqref{EulerB} can be reformulated as
\begin{align}
\dd \varrho + \mathrm{div}_x\vec m\,\dd t &=0\qquad\,\,\text{in $Q$},\label{eq:aEuler}\\
\dd  \vec m + \mathrm{div}_x\left(\frac{ \vec m \otimes \vec m}{\varrho}\right)\dd t+\nabla_x p\,\dd t  &= \varrho\phi\dd W\,\,
 \text{in $Q$,}\label{eq:bEuler}\\
\dd S +\mathrm{div}_x\left(\frac{S\vec m}{\varrho} \right)\,\dd t &=0 \quad\qquad\text{in $Q$.}\label{eq:cEuler}
\end{align}
For physical relevant solutions, the problem is augmented by the total energy balance
\begin{equation}\label{eq:tenergy}
      \dd \int_{\T}\mathcal E\, \dd x=  \dd \int_{\T}\left [\frac{1}{2}\varrho|\vec u|^2 +\varrho e\right]\, \dd x= \int_{\T}\varrho\phi \cdot\vec u\,\dd x \dd W+ \frac{1}{2}\int_{\T}\|\phi\|_{\ell_2}^2\dx\,\dd t.
\end{equation}
Strong solutions of the system (\ref{Euler}) satisfy (\ref{eq:tenergy}), but for weak solutions it has to be added in the definition. In fact, weak solutions to not posses sufficient spatial regularity to derive \eqref{eq:tenergy} form the other equations. It needs to be derived on the approximate level and subsequently kept in the limit procedure.
\begin{ex}
A particular instance is given by the two-dimensional standard Wiener process
\begin{align}\label{eq:14.02}
\phi W=\begin{pmatrix} 1\\0\end{pmatrix}\beta_1+\begin{pmatrix} 0\\1\end{pmatrix}\beta_2
\end{align}
with two independent standard one-dimensional Wiener processes $ \beta_1$ and $\beta_2$, where $\|\phi\|_{\ell^2}^2=2$. We will apply this noise in our numerical section \ref{sec_numerical} multiplied with some constant to handle the strength and so the influence of it. 
\end{ex}
\subsection{Probability framework}
\label{sec:prob}
Let $(\Omega, \FF, (\FF_t)_{t\geq 0},\p)$ be a complete stochastic basis with a probability measure $\p$ on $(\Omega,\FF)$ and right-continuous filtration $(\FF_t)_{t\geq 0}$.
The filtration is a family of $\sigma$-sub-algebras of $\FF$ indexed by time $t\geq0$, where each $\FF_t$ represents the information available up to time $t$. In other words, it is the collections of events that are \emph{known}  at time $t$. 
 Let $\UU$ be a separable Hilbert space with orthonormal basis $(e_k)_{k\in \N}$ (a natural choice would be $\UU=L^2(\mathcal O)$). We denote by $ L_2(\UU,L^2(\T))$ the set of Hilbert-Schmidt operators from $\UU$ to $L^2(\T)$, i.e., the set of bounded linear mappings $\Phi:\UU\rightarrow L^2(\mathcal O)$ satisfying
 \begin{equation}\label{eq:HSa}
    \sum_{k\geq 1}\|\phi(e_k)\|_{L^{2}(\T)}^2<\infty.
\end{equation}
 The stochastic process $W$ appearing in \eqref{EulerB} is a cylindrical Wiener process $W =(W_t)_{t \geq 0}$ in $\UU$, meaning it is of the form
\begin{equation}
W(s) = \sum_{k\in \N}e_k \beta_k(s),
\label{Dwiener}
\end{equation}

where $(\beta_k)_{k \in \N}$ is a sequence of independent real-valued one-dimensional Wiener processes relative to $(\FF_t)_{t\geq0}$. To identify the precise definition of the diffusion coefficient, set $\UU =\ell^2$ and consider $\varrho \in L^{1}(\T), \varrho > 0$. For a mapping $\phi \in L_2(\UU,L^2(\T))$, we set
$
\phi_k=\phi(e_k)\in L^2(\T)
$, $k\in\N,$
and suppose that
\begin{equation}\label{eq:HS}
    \sum_{k\geq 1}\|\phi_k\|_{L^{\infty}(\T)}^2= \sum_{k\geq 1}\|\phi(e_k)\|_{L^{\infty}(\T)}^2<\infty.
\end{equation}
Consequently, since $\phi_k$ is bounded we deduce 
\begin{equation}\label{fyB}
\|\sqrt{\varrho}\phi_k\|_{L_2(\UU, L^2(\T))}^2\lesssim c(\phi)( \|\varrho\|_{L^{1}(\T)}).
\end{equation}
The stochastic integral 
\[
\int_{0}^{\tau} \varrho \phi \,\dd W = \sum_{k\geq1}\int_{0}^{\tau}\varrho\phi(e_k)\, \dd \beta_k,
\]
takes values in the Banach space  $C([0,T];W^{-\mathfrak k,2}(\T))$ in the sense that
\begin{equation}
    \int_{\T}\left(\int_{0}^{\tau}\varrho\phi \,\dd W\cdot \varphi \right)\,\dd x =\sum_{k\geq1}\int_{0}^{\tau}\left(\int_{\T}\varrho\phi(e_k)\cdot \varphi \,\dd x\right)\,\dd \beta_k, \quad \varphi \in W^{\mathfrak k,2}(\T), \mathfrak k>\frac{n}{2}.
\end{equation}
Finally, we define the auxiliary space $\UU_0$ with  $\UU \subset \UU_0$ as 
\begin{eqnarray}
\UU_0 :&=&\Bigg\{ e = \sum_{k}\alpha_k e_k:\sum_{k}\frac{\alpha_{k}^{2}}{k^2} < \infty \Bigg\}, \nonumber\\
\| e\|_{\UU_0}^{2}:& =& \sum_{k}^{\infty}\frac{\alpha_{k}^{2}}{k^2}, \ e=\sum_{k}\alpha_ke_k,
\label{auxiliary}
\end{eqnarray}
so that the embedding $\UU \hookrightarrow \UU_0$ is Hilbert-Schmidt, and the trajectories of $W$ belong $\p$-a.s. to the class $C([0,T];\UU_0)$ (see \cite{Prato}).

\subsection{Discontinuous Galerkin schemes}\label{sec:DG}
As mentioned in Subsection \ref{sec:cr}, we focus on the two-dimensional setting.  
The spatial domain $\T \subset \R^2$ is then discretized  with a mesh of tensor-product elements, e.g., a regular quadrilateral grid,  denoted by $\TT_h$. We denote the generic cell $K$ and the uniform mesh size with $h$. They  are given by 
$$\mathcal{K}:= [x_{i-1/2,j}, x_{i+1/2,j}] \times [y_{i,j-1/2}, y_{i,j+1/2}]$$
with
$h:= x_{i+1/2,j}-x_{i-1/2,j}=y_{i,j+1/2}-y_{i,j-1/2}$, for simplicity. 
Extensions to  (unstructured) rectangular meshes with cell sizes $h_x\neq h_y$ are straightforward. 
With $\partial \mathcal{K}$ we denote the boundary of an element $\mathcal{K}$ and by $\EE$ the set of all interfaces of all cells $\mathcal{K} \in \TT_h$ where $\ee$ is one interface of $\partial \mathcal{K}$. Between two elements $\mathcal{K}^-$ and $\mathcal{K}^+$ we have a normal vector $\bn$. It is given either by $\bn=(n_x, 0)$ or by $\bn=(0,n_y)$ depending on the interface. 
Let $\mathcal{Q}^{\mathfrak p}([-1,1]^2)$  be the space of all multivariate 
polynomials of degree at most $\mathfrak p\in\N_0$ in each variable.%\footnote{change $p$ to $k$ or similar}
On each element $\mathcal{K} \in \TT_h$, we have a linear map  $T_{\mathcal{K}}:[-1,1]^2\to \mathcal{K}$  and    $\mathcal{Q}^{\mathfrak p}(\mathcal{K})$  is  spanned by functions $\psi \circ T_{\mathcal{K}}^{-1}$ with $\psi\in \mathcal{Q}^{\mathfrak p}([-1,1]^2)$.
The DG solution space $\VV^h$ is given by
 \begin{equation}\label{eq:spaceA}
   \VV^h = \left\{ v^h\in L^1(\T)  \Big| v^h|_{\mathcal{K}} \in \mathcal{Q}^{\mathfrak p}(\mathcal{K}) \text{ for all } \mathcal{K}\in \TT_h \right\}.
 \end{equation}
Approximate solutions to the Euler equations \eqref{Euler} live in  the vector version of this 
 space, that is $\tilde{\VV}^h= [\VV^h]^4$. 
 To describe an element of the finite dimensional space $\VV^h$, we apply a nodal Gauss-Lobatto basis. We denote by  $\xi_i$ the Gauss-Lobatto points in the interval $[-1,1]$. Further,  $L_i$ is the Lagrange polynomial which fulfils $L_i(\xi_j)=\delta_{i,j}$. Here,  $\delta_{i,j}$ is the Kronecker delta. Lagrange polynomials form a basis  for $\mathcal{Q}^{\mathfrak p}([-1,1])$ in one dimension. We obtain a basis for
 $\mathcal{Q}^{\mathfrak p}([-1,1]^2)$  via the tensor product of the one-dimensional basis, i.e., $L_i(x)L_j(y)$.
 In each element, each component of the conservative variable vector of the Euler equations is approximated by a polynomial in the reference space \eqref{eq:spaceA}. The nodal values (interpolation points) are our degrees of freedom (DOFs) where we have to calculate the time-dependent nodal coefficients for all components (density, momentum, energy) in the following.
  To make this point clear, our numerical solution  is given by $\bU^h=(\varrho^h, \bm^h,\mathcal E^h)^\top \in \tilde{\VV}^h$ 
  and each component is represented by a polynomial, e.g. in the reference element the approximated density is given by 
  \begin{equation}\label{eq:approx}
  \varrho^h(x,y,t) = \sum_{i,j=0}^{\mathfrak p} \hat{\varrho}^h(\xi_i, \nu_j, t) L_i(x) L_j(y)= \sum_{i,j=0}^{\mathfrak p} \hat{\varrho}_{i,j}(t) L_i(x) L_j(y)
  \end{equation}
where $ \xi$ and $\nu$ are our Gauss-Lobatto nodes in $x-$ and $y-$direction
and 
$\hat{\varrho}_{i,j}(t)$ is the time-dependent coefficient of our polynomial presentation, i.e. $\hat{\varrho}_{i,j}(t)= \varrho^h(\xi_i, \nu_j,t)$. 
Analogous notation holds for the other components of $\bU^h$. \\
Next, we explain our DG discretization for the stochastic Euler system  \eqref{EulerB}. To avoid the non-conservative products inside the flux formulation which can yield to problems and needs special treatment \cite{abgrall2018,abgrall2010, renac_2019}, we use equation \eqref{eq_total}   instead of the internal energy equation in \eqref{EulerB}. Therefore, the third conservative variable is the total energy  $\mathcal E$ and we consider the system

%We explain now the DG discretisation for the Euler equations \eqref{EulerB}, where the third conservative variable is the total energy $\mathcal E$. That is we have
\begin{align}\label{EulerC}
\begin{aligned}
\dd \varrho + \mathrm{div}_{x}(\vec m)\, \dd t &=0 \quad \text{in}\, Q,\\
\dd \vec m+ \mathrm{div}_x\Big(\frac{\vec m \otimes \vec m}{\varrho}\Big)\, \dd t + (\gamma-1)\nabla_x \bigg(\mathcal E-\frac{1}{2}\frac{|\vec m|^2}{\varrho}\bigg)\,\dd t&=\varrho \phi \,\dd W\quad \text{in}\, Q,\\
\dd \mathcal E + \mathrm{div}_x\left(\left(\mathcal E+(\gamma-1)\Big(\mathcal E-\frac{1}{2}\frac{|\vec m|^2}{\varrho}\Big)\right)\frac{\vec m}{\varrho}\right)\dd t &=\frac{1}{2}\varrho\|\phi\|_{\ell_2}^2 \,\dd t+ \phi\cdot \vec m\,\dd W\quad \text{in}\, Q.\end{aligned}
\end{align}
Or in shorter terms
\begin{align}\label{EulerC_short}
\dd\bU+\mathrm{div}(\mathbf{f}(\bU))\dt=\bh(\bU)\dt+\bg(\bU)\,\dd W,
\end{align} 
where
\begin{align*}
\mathbf{f}(\bU)&=\mathbf{f}\begin{pmatrix}\varrho\\\vec m\\\mathcal E
\end{pmatrix}=\begin{pmatrix}\vec m\\\frac{\vec m \otimes \vec m}{\varrho}+ (\gamma-1)\bigg(\mathcal E-\frac{1}{2}\frac{|\vec m|^2}{\varrho}\bigg)\mathbb I_{2\times 2}\\\left(\mathcal E+(\gamma-1)\Big(\mathcal E-\frac{1}{2}\frac{|\vec m|^2}{\varrho}\Big)\right)\frac{\vec m}{\varrho}\end{pmatrix},\\
\mathbf{h}(\bU)&=\mathbf{h}\begin{pmatrix}\varrho\\\vec m\\\mathcal E
\end{pmatrix}=\varrho\begin{pmatrix}0\\0\\\tfrac{1}{2}\|\phi\|_{\ell_2}^2 \end{pmatrix},\quad\mathbf{g}(\bU)=\mathbf{g}\begin{pmatrix}\varrho\\\vec m\\\mathcal E
\end{pmatrix}=\begin{pmatrix}0\\\varrho \phi \\\phi\cdot \vec m\end{pmatrix}.
\end{align*}
In the case of the two-dimensional noise from \eqref{eq:14.02} we have
\begin{align}\label{eq:14.02b}
\mathbf{h}(\bU)=&\varrho\begin{pmatrix}0\\0\\ 0 \\1\end{pmatrix},\quad\mathbf{g}(\bU)\dd W=\begin{pmatrix}0\\\varrho\, \dd\beta_1\\\varrho\,\dd \beta_2 \\m_1\,\dd\beta_1+m_2\,\dd\beta_2\end{pmatrix}.
\end{align}
If the noise is truly infinite dimensional, it has to be approximated for practical purposes as outlined in Remark \ref{rmk:noise} below.

 We  start by defining the DG discretization for the flux term $\mathrm{div}(\mathbf{f}(\bU))$ in the Euler equation \eqref{EulerC}. 
Let $(\Omega, \FF, (\FF_t)_{t\geq 0},\p)$ be a complete stochastic basis with a probability measure $\p$ on $(\Omega,\FF)$ and right-continuous filtration $(\FF_t)_{t\geq 0}$, $W$ an $(\mathfrak F_t)$-adapted (cylindrical) Wiener process and $\phi\in L_2(\mathfrak U;L^2(\T))$.
%  We approximate the solution by $\bU^h\in \tilde{\VV}^h$, multiply the equation by a test function $\bV^h \in \tilde{\VV}^h$, integrate over the domain $\T$, integrating the flux term by parts over each element $K \in \TT^h$ twice. 
%  Since the space $\bVV^h$ is discontinuous over element interfaces, the fluxes $\mathbf{f}(\bU^h)$ are not well-defined. After the first integration, the fluxes over the elements are replaced by single-valued numerical fluxes $\bfnum$ and one obtain the weak DG form.

 Multiplying by a  test function $\bV^h \in \tilde{\VV}^h$ and integrate over the domain $\T$ it reads as
 \begin{align}\label{eq:DG_strong_analytical}
\begin{aligned}
  \dd\int_{\T} \bU^h \cdot \bV^h \diff \bx& +\sum_{K\in \TT_h} \int_K
  (\mathrm{div}\,\mathbf{f} (\bU^h) )\cdot \bV^h \diff \bx\dt \\=&- \sum_{\partial K^- \in \TT_h} \int_{\partial K^-} (\bfnum(\bU^{h,-}, \bU^{h,+},\bn^-) - \mathbf{f}(\bU^h) \cdot \bn^-) \cdot  \bV^{h,-}\,\dd\mathcal H^1\dt\\
&+\int_{\T}\bh(\bU^h) \cdot \bV^h \diff \bx\,\dd t+\int_{\T}\bg(\bU^h) \cdot \bV^h \diff \bx\,\dd W,
\end{aligned}
 \end{align}
where $\mathbf{f}(\bU^h)=(\mathbf{f}_1,\mathbf{f}_2)$ and for the numerical flux $\bfnum$ we choose
the local Lax-Friedrichs flux given by 
\begin{equation}\label{eq:LLF}
 f^{\mathrm num}(\bU^{h,-}, \bU^{h,+}, \bn^-):= \frac{1}{2} \left(\mathbf{f}(\bU^{h,-}) + \mathbf{f}(\bU^{h,+}) \right)\cdot \bn^- - \frac{\lambda}{2} \left( \mathbf{f}(\bU^{h,+})-\mathbf{f}(\bU^{h,-}) \right),
\end{equation}
where $\lambda \geq \lambda_{max}$ is an upper bound for the maximal wave speed. For the Euler equations \eqref{EulerC}, it holds that 
$\lambda_{max}=\max\{ |\bu^h_{K^-}|+c_{K^-},|\bu^h_{K^+}|+c_{K^+} \}$ with $c=\sqrt{\gamma \frac{p^h}{\varrho^h} }$.\\
Note that  an inner product denoted by $\cdot$ means that \eqref{eq:DG_strong_analytical}
 has to be solved for each component of the Euler equation separately. 
Later all the calculations in our high-order DG method are performed in a reference element $I=[-1,1]^2$.
To evaluate the integrals, we proceed by choosing the Gauss-Lobatto nodes as collocated quadrature points. 
Finally, by this selection of a tensor-product basis and Gauss-Lobatto quadrature, the DG operators have a Kronecker-product structure as well. We denote by $\mat{M}_1$ on $[-1,1]$ the one-dimensional mass matrix.
It has diagonal form with quadrature weight on the diagonal.  Further, we obtain the one-dimensional differentiation matrix $\mat{D}_{1}$  by evaluating the derivatives of the basis functions at the nodal points.  We use the index here to clarify that we have the one-dimensional setting and working on the reference element $I$. 
To clarify the setting, let $\xi_j$ be the Gauss-Lobatto quadrature points $-1=\xi_0<\xi_1<\cdots<\xi_{\mathfrak p}=1$ in $[-1,1]$
with corresponding quadrature weights 
 $\{\omega_j\}_{j=0}^{\mathfrak p}$. The nodal Lagrangian basis is given by  $L_j(\xi_l)=\delta_{jl}$ and we can define 
the discrete inner $\langle u,v\rangle_{\omega}:=\sum_{j=0}^{\mathfrak p} \omega_j u(\xi_j) v(\xi_j)$ in one space-dimension.  Then, the above described operators are given by 
 \begin{itemize}
\item Difference matrix $\mat{D}_1$ with $\mat{D}_{1,jl} =L_l'(\xi_j)$ with $L_l'$ be the first derivative of the $l-$Lagrange polynomial,
\item Mass matrix $\mat{M}_{1,jl}=\langle L_j,L_l\rangle_{\omega} =\omega_j \delta_{jl}$, so that
   $\mat{M}_1 = \diag \{\omega_0, \dots, \omega_{\mathfrak p}\}$.
\end{itemize}
Later, we need also the operators 
\begin{itemize}
\item Stiffness matrix $\mat{Q}_{1,jl}=\langle L_j',L_l\rangle_{\omega} =\langle L_j,L_l'\rangle_{\omega}$
\item Interface matrix $\mat{B}_1=\diag(-1,0,\cdots,0,1)$
\end{itemize}
Up to this point, these operators would work separately on every component of the conservative variable vector $\bU^h$, i.e. 
on the density $\varrho^h$, momentum  $\bm^h$, and energy $\mathcal{E}^h$. To extend these operators to the Euler system, a simple Kronecker product can be used. It is
 \begin{equation*}
   \mat{\mathbf{M}}_1=\mat{M}_1 \otimes \IdxM \qquad \mat{\mathbf{D}}_1=\mat{D}_1 \otimes \IdxM,
 \end{equation*}
where $ \IdxM$ is the $4\times4$-identy matrix.\\
Similar in two-dimension, we obtain the local mass matrix and differentiation matrix in the standard element $I$ through Kronecker products:
%\begin{equation}\label{eq:SBP_Operators}
  $\mat{\mathbf{M}}_I= \mat{\mathbf{M}}_1\otimes  \mat{\mathbf{M}}_1,
  \mat{\mathbf{D}}_{1,I}= \IdxM\otimes \mat{\mathbf{D}}_1,
  \mat{\mathbf{D}}_{2,I}= \mat{\mathbf{D}}_1 \otimes \IdxM.$
%\end{equation}
To solve \eqref{eq:DG_strong_analytical}, we have to  evaluate the 
cell interface integrals. Due to the tensor structure ansatz, we evaluate at each interface the one-dimensional Gauss-Lobatto quadrature rule. Due to the Kronecker ansatz, we obtain the
interface operators 
$
\mat{\mathbf{B}}_{\ee_1}=  \mat{\mathbf{M}}_1 \otimes \mat{\mathbf{B}}_1 \text{ and }  \mat{\mathbf{B}}_{\ee_2}= \mat{\mathbf{B}}_1 \otimes  \mat{\mathbf{M}}_1 
$
depending on the considered cell interfaces. The defined operators fulfil the summation-by-parts property meaning that they mimic discretely integration-by-parts. \\
In  \eqref{eq:DG_strong_analytical}, we have to calculate  the time-dependent coefficients of our polynomial representation \\
$\bU^h= (\varrho^h, \bm^h, \mathcal{E}^h) \in\tilde{\VV}^h.$ We have to distinguish in numbering between the interpolation (quadrature) points in $x$- and $y$- direction in \eqref{eq:approx}. For simplicity, we are renumbering the points. We have $n_{\mathfrak p}=({\mathfrak p}+1)^2$ quadrature/interpolation points\footnote{Alternatively, a multi-index can be used for the notation.} denoted by $\boldsymbol{\xi}_i$ in each element and $n_{b}=4{\mathfrak p}$ at the interfaces. On each face we have ${\mathfrak p}+1$ Gauss-Lobatto quadrature nodes but on the corners they intersect. The basis is given by $\mathbf{L}_i$ with $i \in \{1,n_{\mathfrak p}\}$.
 We denote by $\bvu$ the vector of coefficients (i.e., the nodal values) of $\bU^h$ on $K$:
\begin{equation}\label{eq:coefficients}
\bvu= \left(\varrho^h(\boldsymbol{\xi}_1), \bm^h(\boldsymbol{\xi}_1), \mathcal{E}^h(\boldsymbol{\xi}_1 );
\varrho^h(\boldsymbol{\xi}_2), \bm^h(\boldsymbol{\xi}_2),  \mathcal{E}^h(\boldsymbol{\xi}_2 ); \dots ;\varrho^h(\boldsymbol{\xi}_{n_{\mathfrak p}}), \bm^h(\boldsymbol{\xi}_{n_{\mathfrak p}}), \mathcal{E}^h(\boldsymbol{\xi}_{n_{\mathfrak p}}) \right)^T.
\end{equation}
Let $ \underline{\mathbf{f}}_m$ ($m=1,2$) denote the vector of values of $\mathbf{f}_m(\bU^h)$ evaluated at the nodal points. For each cell interface $\ee \in \partial K \subset \EE$, we have to evaluate $
\mathbf{f}(\bU^h)$ at the one-dimensional Gauss-Lobatto nodes on the cell interface $\ee$ (where the trace of $\bU^h$ is taken from inside $K$ dotted with the scaled normal vector $\bn$ facing outwards from $\ee$). We denote this by $\mat{\mathbf{R}}_{\ee_m}\underline{\mathbf{f}}_{\ee_m}$. Likewise $ \underline{\mathbf{f}}^{\mathrm{num}}_{\ee_m}$ denotes the nodal values of $\bfnum(\bU^{h,-}, \bU^{h,+},\bn^-)$.
With these operators, we can finally re-write the DG semidiscretization  \eqref{eq:DG_strong_analytical} on the reference element as follows
\begin{align}\label{eq:DG_standard}
\begin{aligned}
 \dd \bvu &+  \big(\mat{\mathbf{D}}_{1,I}  \underline{\mathbf{f}}_1 +\mat{\mathbf{D}}_{2,I}  \underline{\mathbf{f}}_2\big)\dt\\&\qquad\qquad= \mat{\mathbf{M}}_I^{-1}  \sum_{j \in \partial I} \mat{\mathbf{B}}_{j} \left(\mat{\mathbf{R}}_{j}\underline{\mathbf{f}}_{j}-\underline{\mathbf{f}}^{\mathrm{num}} \right)\dt+\underline\bh\,\dd t+\underline\bg\,\dd W,
\end{aligned}
\end{align}
where $\partial I$ denotes the cell interfaces of the reference element. 
Equation \eqref{eq:DG_standard} describes the classical discontinuous Galerkin spectral element method (DGSEM) in two-space dimension.
The method is by construction not  entropy conservative/ dissipative for the deterministic Euler equations \cite{zbMATH07517718,gassner2016}. To obtain an high-order entropy dissipative DG method for the Euler equation, we apply the flux differencing approach \cite{fischer_2013,gassner2016}. To this end we replace the volume flux  $\sum_{m=1}^2 \mat{\mathbf{D}}_{m,I}  \bvf_m $  in equation \eqref{eq:DG_standard} above using consistent, symmetric two-point  numerical fluxes. The resulting DG scheme in the reference element reads than 
 \begin{align}\label{eq:DG_standard_2}
\begin{aligned}
 \dd \bvu &+ 2 \left( \mat{\mathbf{D}}_{1,I} \underline{\mathbf{f}}^{\mathrm{num}}_{1,Vol}(\bvu, \bvu)+\mat{\mathbf{D}}_{2,I} \underline{\mathbf{f}}^{\mathrm{num}}_{2,Vol}(\bvu, \bvu) \right)\dt\\&\qquad\qquad= \mat{\mathbf{M}}_I^{-1}  \sum_{j \in \partial I} \mat{\mathbf{B}}_{j} \left(\mat{\mathbf{R}}_{j}\bvf_{j}-\underline{\mathbf{f}}^{\mathrm{num}}_j \right)\dt+\underline\bh\,\dd t+\underline\bg\,\dd W,
\end{aligned}
\end{align}
where $\underline{\mathbf{f}}^{\mathrm{num}}_{Vol}$ denotes the numerical volume flux working on each degree of freedom and $\underline{\mathbf{f}}^{\mathrm{num}}_j $ is the classical numerical flux at the interface. 
For the numerical volume flux $\mathbf{f}^{\operatorname{num}}_{m,Vol}$ with $m=1,2$ , we select the consistent, symmetric and entropy conservative two-point flux from \cite{ranocha2018}. It is defined for each component separately: 
 \begin{equation}\label{eq:Ranocha_flux} 
 \begin{aligned}
  f^{\mathrm num}_{\varrho,1} &=\{\{\varrho\}\}_{\log}\armean{u_1}, \quad  f^{\mathrm num}_{\varrho u_1,1}  =\armean{ u_1} f^{\mathrm num}_{\varrho,1} +\armean{ p}, \quad  f^{\mathrm num}_{\varrho u_2,1}=\armean{ u_2} f^{\mathrm num}_{\varrho} \\
  f^{\mathrm num, x}_{E,1}&= \left( \{\{\varrho\}\}_{\log}  \left( \armean{ u_1}^2+\armean{ u_2}^2 - \frac{\armean{ u_1+u_2}^2}{2} \right) -\frac{1}{\gamma-1} \frac{\logmean{ \varrho} }{\logmean{ \varrho/p} } +
  \armean{ p} \right) \armean{u_1} \\
  &-\frac{\jump{p} \jump{\bu}}{4},
 \end{aligned}
 \end{equation}
 with $\mathbf{f}^{\operatorname{num}}_2$ defined analogously. Here, we have used the abbreviations $\armean{\varrho}=\frac{\varrho^{+}+\varrho^-}{2} $ and
 $\logmean{\varrho}=\frac{\varrho^{+}-\varrho^-}{\log \varrho^+-\log \varrho^-}$.

\begin{rmk}[Interpretation and solvability]
\begin{itemize}
\item Discretization \eqref{eq:DG_standard_2} is classical entropy stable DGSEM method  for the Euler equation equipped with two additional forcing terms due to the noise. The first term $\bh$ has no stochastic component and can be interpreted as a simple forcing of the total energy  whereas the second part $\bg$ acts on the momentum equation and the total energy and takes the stochasticity into account.
\item The well-posedness assumption regarding \eqref{eq:DG_standard_2} can be substituted with the condition that the coefficients of the nonlinear SDE system are Lipschitz continuous, as noted in \cite{KuLu} for the deterministic parts. This substitution is directly deducible from the definition of the Euler fluxes and the assumptions concerning the positive values of pressure and density. The noise coefficients are linear and thus clearly Lipschitz as well. Hence standard results yield the existence of a solution defined on a given complete stochastic basis $(\Omega, \FF, (\FF_t)_{t\geq 0},\p)$ with respect to the $(\FF_t)$-Wiener process $W$ and given initial data (which we assume for simplicity to be deterministic).
\end{itemize}
\end{rmk}

\section{Concept of solutions and main results}\label{se_concept}

\subsection{Measure-valued solutions}\label{MVS}

We are now ready to introduce the concept of measure-valued martingale solutions to the \textit{complete} stochastic Euler system written in entropy conservative variables (\ref{eq:aEuler})--(\ref{eq:cEuler}). From here onward, we denote by $\mathcal{M}^+$ the space of non-negative Radon measures, and we denote by  $A$ the space of ``dummy variables":

\begin{equation}\label{eq:space}
A = \bigg\{[\varrho', \vec m',{S'}]\bigg|\varrho'\geq 0, \vec m' \in \R^3,S'\in \R\bigg\}
\end{equation}
%\[
%A = \bigg\{[\varrho', \vec m',{S'}]\bigg|\varrho'\geq 0, \vec m' \in \R^3,S'\in \R\bigg\}.
%\]
We denote by $\mathcal{P}(A)$ the space of probability measures on $A$. We follow \cite{Mo} and define: 

\begin{dfn}[Dissipative measure-valued martingale solution]\label{E:dfn}Let $\varrho_0\in L^{\gamma}(\T)$, $\vec m\in L^{\frac{2\gamma}{\gamma +1}}(\T)$, $S_0\in L^{\gamma}(\T)$ and $\phi \in L_2(\UU;L^2(\T))$. Then 
\[
((\Omega,\FF, (\FF_t)_{t\geq 0},\p),\varrho,\vec m,S, \mathcal{R}_{\text{conv}},\mathcal{R}_{\text{press}},\mathcal{V}_{t,x},\mathfrak t, W)
\]
is called a dissipative measure-valued
martingale solution to (\ref{eq:aEuler})--(\ref{eq:cEuler}) with initial law $\Lambda$, provided\footnote{Some of our variables are not stochastic processes in the classical sense and we interpret their adaptedness in the sense of random distributions as introduced in \cite{FrBrHo} (Chap. 2.2).}:

\begin{enumerate}
    \item [(a)] $(\Omega,\FF, (\FF_t)_{t\geq 0},\p)$ is a stochastic basis with complete right-continuous filtration;
\item[(b)] $\mathfrak t$ is an $(\FF_t)_{t\geq 0}$-stopping time with $\p(\mathfrak t>0)=1$;
    \item[(c)]$W$ is a $(\FF_t)_{t\geq 0}$-cylindrical Wiener process;
    \item[(d)] The density $\varrho$ is $(\FF_t)_{t\geq 0}$-adapted and satisfies $\p$-a.s.
    \[
    \varrho(\cdot\wedge\mathfrak t) \in C_{\text{loc}}([0,\infty), W^{-4,2}(\T))\cap L_{\text{loc}}^{\infty}(0,\infty;L^{\gamma}(\T));
    \]
    \item[(e)] The momentum $\vec m$ is $(\FF_t)_{t\geq 0}$-adapted and satisfies $\p$-a.s.
    \[
    \vec m(\cdot\wedge\mathfrak t) \in C_{\text{loc}}([0,\infty), W^{-4,2}(\T))\cap L_{\text{loc}}^{\infty}(0,\infty;L^{\frac{2\gamma}{\gamma +1}}(\T));
    \]
    \item[(f)] The total entropy $S$ is $(\FF_t)_{t\geq 0}$-adapted and satisfies $\p$-a.s.
    \[
    S(\cdot\wedge\mathfrak t) \in L^\infty([0,\infty),L^{\gamma}(\T))\cap BV_{w,\text{loc}}(0,\infty;W^{-l,2}(\T)), l>\frac{n+2}{2};
    \]
    \item[(g)] The parametrised measures $(\mathcal{R}_{\text{conv}},\mathcal{R}_{\text{press}},\mathcal{V})$ are $(\FF_t)_{t\geq 0}$-progressively measurable and satisfy $\p$-a.s.
    \begin{eqnarray}\label{eq:para}
    t\mapsto \mathcal{R}_{\mathrm{conv}}(t\wedge\mathfrak t) &\in &L_{\text{weak-(*)}}^{\infty}(0,\infty;\mathcal{M}^+(\T, \mathbb{R}^{3\times 3}));\\
    t\mapsto \mathcal{R}_{\mathrm{press}}(t\wedge\mathfrak t) &\in &L_{\text{weak-(*)}}^{\infty}(0,\infty;\mathcal{M}^+(\T, \mathbb{R}));\\
    (t,x)\mapsto \mathcal{V}_{t\wedge\mathfrak t,x} &\in &L_{\text{weak-(*)}}^{\infty}(Q;\mathcal{P}(A));
    \end{eqnarray}
    \item[(h)] We have $\varrho(0,\cdot)=\varrho_0$, $\vec m(0,\cdot)=\vec m_0$ and $\liminf_{\tau\rightarrow0+}S(\tau,\cdot)\geq S(\cdot,0)$ $\mathbb P$-a.s.;
    \item[(i)]For all $\varphi \in C^{\infty}(\T)$ and all $\tau > 0$ there holds $\p$-a.s.
    \begin{equation}\label{eq:cont}
       \left[\int_{\T}\varrho\varphi\, \dd x\right]_{t=0}^{t=\tau\wedge\mathfrak t} = \int_{0}^{\tau\wedge\mathfrak t}\int_{\T} \vec m \cdot \nabla \varphi \, \dd x\dd t;
    \end{equation}
    \item[(j)]For all $\boldsymbol{\varphi} \in C^{\infty}(\T)$ with $\boldsymbol{\varphi}\cdot\mathbf n=0$ and all $\tau > 0$ there holds $\p$-a.s.
    \begin{eqnarray}\label{eq:mcxs}
         \left[\int_{\T}\vec m \cdot \boldsymbol{\varphi}\right]_{t=0}^{t=\tau\wedge\mathfrak t}&=&\int_{0}^{\tau\wedge\mathfrak t}\int_{\T}\left[\frac{\vec m \otimes\vec m}{\varrho}:\nabla\boldsymbol{\varphi}+\varrho\exp{\left(\frac{S}{c_v\varrho}\mathrm{div}\boldsymbol{\varphi}\right)}\right]\, \dd x\, \dd t\\
         &&+\int_{0}^{\tau\wedge\mathfrak t}\nabla \varphi:\dd \mathcal{R}_{\text{conv}} \dd t+\int_{0}^{\tau\wedge\mathfrak t}\int_{\T}\mathrm{div} \boldsymbol{\varphi}\,\dd \mathcal{R}_{\text{press}} \dd t\nonumber\\
         &&+\int_{0}^{\tau\wedge\mathfrak t}\boldsymbol{\varphi}\cdot \varrho \phi \, \dd x\, \dd W;\nonumber
    \end{eqnarray}
    \item[(k)] The total entropy holds in the sense that 
    \begin{equation}\label{eq:entr}
        \int_{0}^{\tau\wedge\mathfrak t}\int_{\T}\left[\langle\mathcal{V}_{t,x};S'\rangle\partial_t \varphi+\langle\mathcal{V}_{t,x},S'\frac{\vec m'}{\varrho'}\rangle\cdot \varphi\right]\, \dd x\dd t \leq \left[\int_{\T}\langle \mathcal{V}_{t,x};S'\rangle \varphi\, \dd x\right]_{t=0}^{t=\tau\wedge\mathfrak t},
    \end{equation}
    for any $\varphi \in C^1([0,\infty)\times \T), \varphi \geq 0, \p$-a.s.
    \item[(l)] The total energy satisfies
    \begin{equation}\label{eq:Ener}
        \mathcal{E}_{t\wedge\mathfrak t}=\mathcal{E}_{s\wedge\mathfrak t}+\frac{1}{2}\int_{s\wedge\mathfrak t}^{t\wedge\mathfrak t}\|\sqrt{\varrho}\phi\|_{L_2(\UU;L^2(\T))}^2\, \dd \sigma + \int_{s\wedge\mathfrak t}^{t\wedge\mathfrak t}\int_{\T}\vec m \cdot \phi \, \dd x \dd W,
    \end{equation}
    $\p$-a.s. for a.a. $0\leq s<t$, where
    \[
    \mathcal{E}= \int_{\T}\left[\frac{1}{2}\frac{|\vec m |^2}{\varrho}+c_v\varrho^{\gamma}\exp{\left(\frac{S}{c_v\varrho}\right)}\right]\, \dd x + \frac{1}{2}\int_{\T}\dd \,\mathrm{tr}\mathcal{R}_{\text{conv}}(t) +c_v\int_{\T}\dd \,\mathrm{tr}\mathcal{R}_{\text{press}}(t)
    \]
    for $\tau \geq 0$ and 
    \[
   \mathcal E_0= \int_{\T}\left[\frac{1}{2}\frac{|\vec m_0 |^2}{\varrho_0}+c_v\varrho_0^{\gamma}\exp{\left(\frac{S_0}{c_v\varrho_0}\right)}\right]\, \dd x.
    \]
\end{enumerate}
\end{dfn}
The following result is proved in \cite{Mo} (with $\mathfrak t=\infty$) and ensures that dissipative measure-valued martingale solution exists globally.
\begin{thm}\label{ExMainr}
Assume (\ref{eq:HS}) holds. Let $\Lambda$ be a Borel probability measure on $L^{\gamma}(\T)\times L^{\gamma}(\T)\times L^{\frac{2\gamma}{\gamma +1}}(\T)$ such that
\[
\Lambda\bigg\{(\varrho,S,\vec m)\in L^{\gamma}(\T)\times L^{\gamma}(\T)\times L^{\frac{2\gamma}{\gamma +1}}(\T): 0<\underline{\varrho}<\varrho<\overline{\varrho},0<\underline{\vartheta}<\vartheta<\overline{\vartheta} \bigg\}=1,
\]
where $\underline{\vartheta}, \overline{\vartheta},\underline{\varrho}, \overline{\varrho} $ are deterministic constants. Moreover, the moment estimate

\[
\int_{L^{\gamma}(\T)\times L^{\gamma}(\T)\times L^{\frac{2\gamma}{\gamma +1}}(\T)}\left\|\frac{1}{2}\frac{|\vec m|^2}{\varrho}+c_v\varrho^{\gamma}\exp{\left(\frac{S}{c_v\varrho}\right)}\right\|_{L^1(\T)}^p\,\dd \Lambda < \infty,
\]
holds for all $p\geq 1$. Then there exists a dissipative measure-valued martingale solution to the \textit{complete} stochastic Euler system  (\ref{eq:aEuler})--(\ref{eq:cEuler}) in the sense of Definition \ref{E:dfn} (with $\mathfrak t=\infty$) subject to the initial law $\Lambda$. 

It satisfies the entropy balance also in the renormalised sense, that is we have
    \begin{equation}\label{eq:entrren}
        \int_{0}^{\tau\wedge\mathfrak t}\int_{\T}\left[\langle\mathcal{V}_{t,x};Z(S')\rangle\partial_t \varphi+\langle\mathcal{V}_{t,x},Z(S')\frac{\vec m'}{\varrho'}\rangle\cdot \varphi\right]\, \dd x\dd t \leq \left[\int_{\T}\langle \mathcal{V}_{t,x};Z(S')\rangle \varphi\, \dd x\right]_{t=0}^{t=\tau\wedge\mathfrak t},
    \end{equation}
    for any $\varphi \in C^1([0,\infty)\times \T), \varphi \geq 0, \p$-a.s.,{and any  $Z \in BC(\R)$ non-decreasing.} 
\end{thm}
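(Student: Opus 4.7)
My plan is to approximate \eqref{eq:aEuler}--\eqref{eq:cEuler} by a regularized stochastic compressible Navier--Stokes--Fourier system: add artificial viscosity $\varepsilon\Delta\vec u_\varepsilon$ to the momentum equation and heat conductivity in the entropy balance, and if necessary regularize the pressure. Existence at the $\varepsilon$-level can be obtained from a Galerkin truncation combined with a pathwise fixed-point argument and stopping times controlling the stochastic integral; at this level a renormalized entropy inequality holds and a total energy balance can be derived via It\^o applied to $\mathcal E_\varepsilon$. Applying It\^o to $\mathcal E_\varepsilon^p$, using Burkholder--Davis--Gundy on the martingale term and Gronwall, the moment hypothesis on $\Lambda$ transfers into uniform-in-$\varepsilon$ bounds
$\E\big[\sup_{t\in[0,T]}\|\mathcal E_\varepsilon(t)\|_{L^1(\T)}^p\big]\lesssim 1$
for every $p\geq 1$. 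Combined with the entropy inequality, these yield uniform control of $\varrho_\varepsilon$ in $L^\infty_tL^\gamma_x$, of $\vec m_\varepsilon$ in $L^\infty_tL^{2\gamma/(\gamma+1)}_x$ and of $S_\varepsilon$ in $L^\infty_tL^\gamma_x$, with fractional time regularity in negative Sobolev norms coming from the equations themselves (the stochastic part again by BDG).

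\textbf{Stochastic compactness and identification of limit objects.} The laws of $(\varrho_\varepsilon,\vec m_\varepsilon,S_\varepsilon,W)$ are tight on a path space involving weak-$*$ topologies on spaces of Radon measures; these are not Polish, so I would invoke Jakubowski's extension of the Skorokhod representation theorem to obtain, on a new stochastic basis, processes with the same laws that converge almost surely along a subsequence to a candidate $(\varrho,\vec m, S, W)$. The parameterized measure $\mathcal V_{t,x}$ is defined as the Young measure generated by $(\varrho_\varepsilon,\vec m_\varepsilon,S_\varepsilon)$, and $\mathcal R_{\mathrm{conv}}$, $\mathcal R_{\mathrm{press}}$ are the concentration defect measures of $\vec m_\varepsilon\otimes\vec m_\varepsilon/\varrho_\varepsilon$ and of the pressure $\varrho_\varepsilon\exp(S_\varepsilon/(c_v\varrho_\varepsilon))$ respectively; the progressive measurability required in (g) is inherited from the $\varepsilon$-level. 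The stochastic integral on the new basis is then identified via the standard martingale characterisation: verify that the limiting driving process is still an $(\FF_t)$-cylindrical Wiener process and that its covariations with the solution match those of the candidate It\^o integral.

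\textbf{Verification of Definition \ref{E:dfn} and main obstacle.} Items (a)--(g) are built into the Skorokhod construction, and (h) follows from convergence at $t=0$. The weak continuity and momentum equations (i), (j) are obtained by passing to the limit in the approximate weak formulations, with the defect measures absorbing precisely the failure of strong convergence in the convective and pressure terms. The entropy inequality (k) is preserved by weak-$*$ lower semicontinuity applied to the convex entropy structure. The renormalized form \eqref{eq:entrren} is obtained by applying the chain rule with a nondecreasing $Z$ to the smooth approximate entropy equation (where the inequality is actually an equality) and then passing to the limit in the $\mathcal V_{t,x}$-formulation, where monotonicity of $Z$ retains the inequality. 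Finally, (l) must be established at the $\varepsilon$-level via It\^o on $\mathcal E_\varepsilon$; the defect contributions to $\mathcal E$ in Definition \ref{E:dfn} are defined so that the resulting equality persists in the limit. The main obstacle I anticipate is the simultaneous identification of the nonlinear limit and the stochastic integral on a non-Polish path space: the defect measures and the Young measure must be constructed compatibly with the filtration so that progressive measurability survives Jakubowski's representation, and passing to the limit in $\vec m\otimes\vec m/\varrho$ without a pointwise lower bound on $\varrho$, as well as in $Z$ composed with the entropy, requires careful use of the $\mathcal V_{t,x}$-representation together with the defect measures.
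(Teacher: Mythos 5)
The paper does not actually prove Theorem \ref{ExMainr}: it is imported verbatim from \cite{Mo}, and the text preceding the statement says exactly this, so there is no in-paper argument to compare yours against. That said, your roadmap is essentially the strategy of the cited work and of the stochastic compactness literature it builds on (cf.\ \cite{FrBrHo}): a dissipative/viscous approximation, uniform bounds in $L^\infty_t L^\gamma_x$ and $L^\infty_t L^{2\gamma/(\gamma+1)}_x$ driven by $p$-th moment energy estimates (It\^o, Burkholder--Davis--Gundy, Gronwall), tightness on a non-Polish path space resolved by Jakubowski's version of the Skorokhod theorem, Young measures plus concentration defect measures $\mathcal R_{\mathrm{conv}},\mathcal R_{\mathrm{press}}$ for the convective and pressure terms, and identification of the stochastic integral through the martingale/quadratic-variation characterisation. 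Two places where your sketch is thinner than what the actual construction requires: (i) existence and the renormalised entropy inequality for the regularised \emph{complete} (heat-conducting) system is itself a substantial result, not a routine Galerkin fixed point, and the minimum principle for the entropy needed to run the renormalisation with $Z\in BC(\R)$ nondecreasing has to be secured at the approximate level; (ii) the total energy \emph{equality} in item (l) of Definition \ref{E:dfn} is not automatic from lower semicontinuity --- the limit first yields an inequality, which is then upgraded to an equality by augmenting the pressure defect $\mathcal R_{\mathrm{press}}$ with a spatially homogeneous measure $h(t)\,\dd x$ (this does not disturb \eqref{eq:mcxs} since $\int_{\T} h(t)\,\mathrm{div}\,\boldsymbol\varphi\,\dd x=0$); the same device appears in Section \ref{mvsproof} of this paper. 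With those caveats, your proposal is a correct high-level account of the route taken in \cite{Mo}.
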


\subsection{Strong solutions}
In the following, we will give the definition of a local strong pathwise solution. These solutions are strong in the probabilistic and PDE sense, at least locally in time. In particular, the Euler system (\ref{Euler}) will
be satisfied pointwise in space-time on the given stochastic basis associated
to the cylindrical Wiener process $W$.

\begin{dfn}[Strong Solution]\label{def:compstrong}
Let $(\Omega, \FF, (\FF_t)_{t\geq 0}, \p)$ be a complete stochastic basis with a right-continuous filtration, let $W$ be an $(\FF_t)_{t\geq 0}$-cylindrical Wiener process, $\phi\in L_2(\mathfrak U;L^2(\T))$ and $\ell\geq4$. The triplet $[r, \Theta, \vec v]$ and a stopping time $\mathfrak{s}$ is called a (local) strong pathwise solution to the system (\ref{Euler}) provided:

\begin{itemize}
    \item[(a)] the density $r> 0$ $\p$-a.s., $t \mapsto r (t\wedge \mathfrak{s},\cdot)\in W^{\ell,2}(\T)$ is $(\FF_t)_{t\geq 0}$-adapted,
    \[
    \E\left[\sup_{t\in [0,T]}\|r (t\wedge \mathfrak{s},\cdot)\|_{W^{\ell,2}(\T)}^q\right]<\infty \quad\text{for all $1\leq q <\infty$, $T>0$},;
    \]
    \item[(b)] the temperature $\Theta > 0$ $\p$-a.s., $t \mapsto \Theta (t\wedge \mathfrak{s},\cdot)\in W^{\ell,2}(\T)$ is $(\FF_t)_{t\geq 0}$-adapted,
    \[
    \E\left[\sup_{t\in [0,T]}\|\Theta (t\wedge \mathfrak{s},\cdot)\|_{W^{\ell,2}(\T)}^q\right]<\infty \quad\text{for all $1\leq q <\infty$, $T>0$};
    \]
    \item[(c)] the velocity $t\mapsto \vec v(t\wedge \mathfrak{s},\cdot) \in W^{\ell,2}(\T)$is $(\FF_t)_{t\geq 0}$-adapted,
    \[
    \E\left[\sup_{t\in [0,T]}\|\vec v (t\wedge \mathfrak{s},\cdot)\|_{W^{\ell,2}(\T)}^q\right]<\infty \quad\text{for all $1\leq q <\infty$, $T>0$};
    \]
    \item[(d)]  for all $t\geq0$ there holds $\p$-a.s.
    \[
    r(t\wedge\mathfrak{s}) = \varrho(0)- \int_{0}^{t\wedge \mathfrak{s}}\mathrm{div}_x(r\vec v)\,\dd t,
    \]
    \[
    (r\vec v)(t\wedge \mathfrak{s})=(r\vec v)(0)-\int_{0}^{t\wedge}\mathrm{div}(r\vec v\otimes \vec v)\, \dd t-\int_{0}^{t\wedge\mathfrak{s}}\nabla_x p(r,\Theta)\,\dd t +\int_{0}^{t\wedge \mathfrak{s}}r\phi\, \dd W,
    \]
    \[
    (rs(r,\Theta))(t\wedge\mathfrak{s}) = (rs(r,\Theta))(0)- \int_{0}^{t\wedge \mathfrak{s}}\mathrm{div}_x(rs(r,\Theta)\vec v)\,\dd t,
    \]
    where $s$ is the total entropy given by (\ref{entropy}).
\end{itemize}
\end{dfn}
Since density and temperature are strictly positive one can rewrite the system from Definition \ref{def:compstrong} (d) equivalently in terms of $\chi:=\log(\varrho)$, the velocity field $\vec v$ and the temperature $\Theta$ obtaining
\begin{align}\label{EulerD}
\begin{aligned}
\dd \Theta + \vec v\cdot\nabla\Theta\dd t +\Theta\,\mathrm{div}(\vec v)\dd t&=0 \quad \text{in}\, Q\\
\dd \chi + \vec v\cdot\nabla\chi+\mathrm{div}_{x}(\vec v)\, \dd t &=0 \quad \text{in}\, Q,\\
\dd \vec v+ (\vec v\cdot\nabla)\vec v\, \dd t + \Theta\nabla_x \chi\,\dd t+\,\nabla\Theta\,\dt&=\phi \,\dd W\quad \text{in}\, Q
.\end{aligned}
\end{align}
This is a symmetric hyperbolic system with additive noise. Given some initial data $(\varrho_0,\bfU_0,\Theta_0)$ we expect the existence of a unique strong pathwise solution provided (setting $\chi_0:=\log(\varrho_0)$)
\begin{align*}
\chi_0,\bfU_0,\Theta_0\in W^{\ell,2}(\mathcal O),\quad\phi\in L_2(\mathfrak U;W^{\ell,2}(\mathcal O)),
\end{align*}
together with no flux boundary conditions for the initial data.
A corresponding result for the problem on the whole space follows from \cite{Kim}.

The following weak (measure-valued)-strong uniqueness principle is proved in \cite{Mo}.
\begin{thm}\label{thm_a} The pathwise weak-strong uniqueness holds true for the system (\ref{eq:aEuler})--(\ref{eq:cEuler}) in the following sense.
 Let 
\[((\Omega , \FF , (\FF)_{t \geq 0}, \mathbb{P} ),\varrho,\vec m,  S, \mathcal{R}_{\mathrm{conv}},\mathcal{R}_{\mathrm{press}},\mathcal{V}_{t,x},W)
\]
 be a dissipative measure-valued martingale solution to (\ref{eq:aEuler})-(\ref{eq:cEuler}) in the sense of Definition (\ref{E:dfn}) (with $\mathfrak t=\infty$) which satisfies additionally the relative entropy balance in the renormalised sense, cf.~\eqref{eq:entrren}. Let the triplet $[r,\Theta,\vec v]$  and a stopping time $\mathfrak{s}$ be a strong solution in the sense of Definition \ref{def:compstrong} of the same problem; defined on the stochastic basis with the same Wiener process and with initial data
 \begin{equation}\label{Idata}
 \varrho(0,\cdot) =r(0,\cdot),\quad\vec u (0,\cdot)={\vec v}(0,\cdot),\qquad \vartheta (0,\cdot)=\vec \Theta(0,\cdot)\quad\p\text{-a.s.}
 \end{equation}
Then
 \[ [{\varrho},{\vartheta},{\vec u}](\cdot \wedge \mathfrak{s}) \equiv [r,{\Theta},{\vec v}] (\cdot \wedge \mathfrak{s}),
\]
and
\[ \mathcal{R}_{\mathrm{conv}}=\mathcal{R}_{\mathrm{press}}=0,   \]
 $\p$-a.s., and for any $(t,x)\in (0,T)\times\T$
 \[
  \mathcal{V}_{t\wedge \mathfrak{s},x}= \delta_{ s(r,\Theta)},
\]
$\p$-a.s.
\end{thm}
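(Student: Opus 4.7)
The plan is to implement a stochastic adaptation of the relative energy method, which is the canonical tool for weak–strong uniqueness in compressible fluid dynamics. With the total energy density written in conservative variables as $E(\varrho,\vec m,S) := \tfrac{1}{2}|\vec m|^2/\varrho + c_v \varrho^\gamma \exp(S/(c_v\varrho))$, which is strictly convex on the physical state space, I would introduce the relative energy
\[
\mathcal{E}_{\mathrm{rel}}(\varrho,\vec m,S\,|\,r, r\vec v, rs(r,\Theta)) := E(\varrho,\vec m,S) - DE(r,r\vec v,rs(r,\Theta)) \cdot \big(\varrho-r,\vec m - r\vec v, S - rs(r,\Theta)\big) - E(r,r\vec v,rs(r,\Theta)).
\]
Its measure-valued analogue is obtained by averaging against $\mathcal V_{t,x}$ and adding the trace of the concentration defects,
\[
\mathfrak{E}(\tau) := \int_{\T}\!\big\langle \mathcal V_{\tau,x};\mathcal{E}_{\mathrm{rel}}\big\rangle\,\dd x + \tfrac{1}{2}\,\mathrm{tr}\,\mathcal R_{\mathrm{conv}}(\tau) + c_v\,\mathrm{tr}\,\mathcal R_{\mathrm{press}}(\tau),
\]
which is non-negative by convexity of $E$ and positivity of the defects, and vanishes precisely when $\mathcal V_{t,x}=\delta_{(r,r\vec v, rs(r,\Theta))}$ and the defects are zero.

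Next I would differentiate $\mathfrak{E}(\tau\wedge\mathfrak s\wedge\mathfrak t)$ and then take expectations. The pure $E$ piece evolves by the total energy identity (l) of Definition~\ref{E:dfn}. The ``linear-in-defect'' part is handled by applying It\^o's formula to $DE(r,r\vec v, rs(r,\Theta))$ viewed as a smooth semimartingale—justified by the $W^{\ell,2}$ regularity and strict positivity in Definition~\ref{def:compstrong}—and coupling with the continuity, momentum and entropy equations of the measure-valued solution tested against the components of $DE$. The crucial algebraic point is that the stochastic integrals $\int \vec m\cdot \phi\,\dd W$ from (l) cancel against the cross terms generated by It\^o applied to the pairing $r\vec v \cdot \vec m$, while the It\^o correction $\tfrac{1}{2}\|\sqrt{\varrho}\phi\|^2$ in (l) and its counterpart $\tfrac{1}{2}\|\sqrt{r}\phi\|^2$ coming from the strong-solution expansion combine into a quadratic form in $\varrho - r$ that is controlled by $\mathfrak E$. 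The remaining deterministic drift splits into convection/pressure remainders, bounded by $\|\nabla \vec v\|_{L^\infty}\mathfrak E$ using the regularity of $[r,\Theta,\vec v]$, and entropy remainders that are tamed through the renormalized entropy inequality \eqref{eq:entrren}, selecting $Z\in BC(\R)$ that approximates the identity on a neighborhood of $s(r,\Theta)$. Collecting all contributions yields
\[
\E\,\mathfrak E(\tau\wedge\mathfrak s\wedge\mathfrak t) \leq C\int_0^\tau \E\,\mathfrak E(\sigma\wedge\mathfrak s\wedge\mathfrak t)\,\dd \sigma,
\]
and Gronwall combined with $\mathfrak E(0)=0$ from \eqref{Idata} forces $\mathfrak{E}\equiv 0$ on $[0,\mathfrak s]$, which delivers both the pointwise identification of the solutions and the vanishing of $\mathcal R_{\mathrm{conv}}, \mathcal R_{\mathrm{press}}$ and concentration of $\mathcal V_{t,x}$.

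The main obstacle is the simultaneous treatment of three intertwined features. First, the parametrized Young-measure formulation forces every pointwise manipulation to be carried out under $\langle \mathcal V_{t,x};\cdot\rangle$, where the factor $\vec m'/\varrho'$ is not a priori integrable against the measure; this is the precise reason the renormalized inequality from Theorem~\ref{ExMainr} is indispensable. Second, the concentration defects appear additively in the momentum equation \eqref{eq:mcxs} and must be absorbed into $\mathfrak E$ by convexity/trace inequalities rather than estimated individually. Third, the noise coefficient $\varrho\phi$ is random (because $\varrho$ is), so the cancellation of stochastic contributions requires the total energy balance (l) to hold as a genuine equality rather than an inequality; without this, the quadratic It\^o corrections produced by the smooth test process $DE(r,r\vec v,rs(r,\Theta))$ cannot be matched exactly, and the Gronwall closure breaks down. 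Getting all three compatible—with the extra bookkeeping induced by the stopping time $\mathfrak s\wedge\mathfrak t$—is the technical heart of the argument.
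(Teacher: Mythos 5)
Your proposal is correct and follows essentially the same route as the proof the paper relies on: the theorem is not proved in this paper but cited from \cite{Mo}, and the argument there is precisely the stochastic relative energy method you describe --- Bregman distance of the convex total energy (equivalently the ballistic free energy form \eqref{relative} used in the paper's own Proposition \ref{propA}), averaging under $\mathcal V_{t,x}$ plus the traces of $\mathcal R_{\mathrm{conv}},\mathcal R_{\mathrm{press}}$, It\^o cross terms whose quadratic corrections cancel because $\mathbb{D}^s_t\vec v=\phi$ for the strong solution, the renormalised entropy inequality \eqref{eq:entrren} to tame the unbounded entropy under the Young measure, and Gronwall after taking expectations. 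The only overstatement is that the energy balance (l) must be an equality: an inequality with the correct sign would still close the Gronwall argument since the martingale term vanishes in expectation either way.
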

\begin{rmk}
If $\mathfrak p=0$ one can
argue as in \cite{feireisl2021numerical}
to show that the solution obtained by our algorithm (cf. Theorem \ref{thm:main1} below) satisfies the relative entropy balance in the renormalised sense, cf.~\eqref{eq:entrren}, such that Theorem \ref{thm_a} applies. For $\mathfrak p\geq1$ this is currently unclear. Note that this point does not seem to be affected by the noise, this issue appears already in the deterministic case.
\end{rmk}

\subsection{Consistency and convergence}
In this subsection we formulate our main results concerning the DG scheme from \eqref{eq:DG_standard_2} with solution $(\varrho^h,\vec m^h,\mathcal E^h)$ posed on a stochastic basis $(\Omega, \FF, (\FF_t)_{t\geq 0},\p)$ regarding its approximation properties for the Euler equations \eqref{Euler}. We work under the following hypothesis: There is an $(\mathfrak F_t)$-stopping time $\mathfrak t$ with $\p(\mathfrak t>0)=1$ and deterministic constants $K>0$ such that $\p$-a.s.
\begin{align}\label{ass:main}
\inf_{t\in[0,\mathfrak t],x\in\T}\varrho^h(t,x)\geq \frac{1}{K},\quad \sup_{t\in[0,\mathfrak t],x\in\T}\mathcal E^h(t,x)\leq K,
\end{align}
uniformly in $h$. This assumption is the natural counterpart of the corresponding hypothesis in the deterministic case made in various papers such as  \cite{AbLu,feireisl2021numerical, KuLu, LuYu}.

Since the scheme \eqref{eq:DG_standard_2} is formulated in terms of the total energy $\mathcal E^h$, while Definition \ref{E:dfn} is formulated in terms of the total entropy $S$ we need to introduce the approximate total entropy $S^h$ given by
\begin{align*}
S^h=c_v\varrho^h\log\bigg((\gamma-1)\bigg(\mathcal E^h-\frac{1}{2}\frac{|\vec m^h|^2}{\varrho^h}\bigg)\bigg)-(c_v+1)\varrho^h\log(\varrho^h).
\end{align*}
Also, the approximate temperature $\vartheta^h$ is given as
\begin{align*}
\vartheta^h=\frac{\gamma-1}{\varrho^h}\bigg(\mathcal E^h-\frac{1}{2}\frac{|\vec m^h|^2}{\varrho^h}\bigg).
\end{align*}
Now we formulate our main result concerning the consistency of the scheme \eqref{eq:DG_standard_2}.
\begin{thm}\label{thm:main1}
Let $(\Omega, \FF, (\FF_t)_{t\geq 0},\p)$ be a complete stochastic basis with a probability measure $\p$ on $(\Omega,\FF)$ and right-continuous filtration $(\FF_t)_{t\geq 0}$ and $W$ an $(\FF_t)$-adapted Wiener process. Suppose that $\phi\in L_2(\mathfrak U;L^2(\T))$ satisfies \eqref{eq:HS}. Let  $(\varrho^h,\vec m^h,\mathcal E^h)$ be the solution to \eqref{eq:DG_standard_2}. Suppose there is an $(\mathfrak F_t)$-stopping time $\mathfrak t$ such that \eqref{ass:main} holds. Suppose that the initial data $(\varrho_0,\vec m_0,S_0)$ satisfies
\begin{align*}
\varrho_0,S_0\in\ L^\gamma(\T),\quad\vec m_0\in L^{\frac{2\gamma}{\gamma+1}}(\T),\quad\inf_{x\in\T}\varrho_0>K,\quad \sup_{x\in\T}\mathcal E_0<K.
\end{align*}
 Then it there is a null-sequence $(h_m)$ such that for all $q<\infty$
    \begin{equation}
        \begin{cases}
         %\color{red} \Tilde{ S}_{0,\varepsilon_m} \to \overline{\Tilde{S_0}} \,\,
        %\text{in} \,\, L_{w^*}^{\infty}(\T;\mathcal{P}(A))\\
        {\varrho}^{h_m} \to \Tilde{\varrho} \,\, \text{in }\,\, C([0,\tilde{\mathfrak t}];W^{-4,2})\cap C_w([0,\Tilde{\mathfrak t}]; L^{\gamma}(\T)),\\
        {S}_{h_m} \to \Tilde{S} \,\, \text{in }\,\, L^q(0,T;W^{-\mathfrak p-4,2}(\T)),\\
       {S}_{h_m} \rightharpoonup^\ast \Tilde{S} \,\, \text{in }\,\,  L^\infty(0,T; L^{\gamma}(\T)),\\
%        {\vec S}^{h_m} \to \Tilde{\vec S} \,\, \text{in }\,\, C([0,\Tilde{\mathfrak t}];W^{-4,2})\cap C_w([0,\Tilde{\mathfrak t}]; L^{\gamma}(\T)),\\
        {\vec{m}}^{h_m} \to \Tilde{\vec m} \, \, \text{in}\, \, C([0,\tilde{\mathfrak t}];W^{-4,2}(\T))\cap C_w ([0,\Tilde{\mathfrak t}];L^{\frac{2\gamma}{\gamma +1}}(\T)),\\
        \varrho^{h_m}\vartheta^{h_m} \to \Tilde{\varrho}\Tilde{\vartheta}+\Tilde{\mathcal{R}}_{\text{press}} \,\, \text{in} \,\, L_{w^*}^{\infty}(0,T;\mathcal{M}^{+}(\T,\R)),\\
        \frac{{\vec m}^h\otimes {\vec m}^h}{\varrho^h} \to \frac{\Tilde{\vec m}^h\otimes \Tilde{\vec m}^h}{\Tilde\varrho^h}+\Tilde{\mathcal{R}}_{\text{conv}} \,\, \text{in} \,\, L_{w^*}^{\infty}(0,T;\mathcal{M}^{+}(\T,\R^{3\times 3})), \\
\delta_{(\varrho^h,\vec m^h,S^h)}\to \Tilde{\mathcal{V}}_{t,x} \,\, \text{in} \,\, L_{w^*}^{\infty}(Q;\mathcal{P}(A)), 
        \end{cases}
    \end{equation}
in law as $h\rightarrow0$, where, for a filtered probability space $(\Tilde{\Omega},\Tilde\FF, (\Tilde\FF_t)_{t\geq 0},\Tilde\p)$, an $(\Tilde\FF_t)$-adapted Wiener process $\Tilde W$ and an $(\Tilde\FF_t)$-stopping time $\Tilde{\mathfrak t}$,
\[
((\Tilde{\Omega},\Tilde\FF, (\Tilde\FF_t)_{t\geq 0},\Tilde\p),\Tilde\varrho,\Tilde{\vec m},\Tilde S, \Tilde{\mathcal{R}}_{\text{conv}},\Tilde{\mathcal{R}}_{\text{press}},\Tilde{\mathcal{V}}_{t,x},\Tilde{\mathfrak t}, \Tilde{W})
\]
is a dissipative measure-valued solution to (\ref{Euler}) in the sense of Definition \ref{E:dfn} with initial law $\delta_{\varrho_0}\otimes\delta_{\vec m_0}\otimes\delta_{S_0}$. It holds $\tilde{\mathfrak t}\sim \mathfrak t$ in law.
\end{thm}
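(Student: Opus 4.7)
The proof follows the stochastic compactness paradigm (Jakubowski--Skorokhod representation) that has become standard for passing from numerical/viscous approximations of compressible fluid systems to dissipative martingale solutions, but it must be carried out on a random time interval $[0,\mathfrak t]$ rather than the whole $[0,T]$. The plan is to (i) extract uniform bounds from hypothesis \eqref{ass:main} and the discrete entropy/energy identities implied by the flux-differencing form \eqref{eq:DG_standard_2}, (ii) establish tightness of the joint laws, (iii) invoke Jakubowski's theorem to obtain almost sure convergence of a new sequence on a new probability space, (iv) pass to the limit in every term of the scheme, carefully identifying the conservative and pressure Reynolds-type defect measures, and (v) identify the limit stochastic integral by martingale arguments. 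Crucially, the stopping time $\mathfrak t$ has to be carried along as a component of the random variable whose law we push forward, so that the limit object $\tilde{\mathfrak t}$ is distributed like $\mathfrak t$ and \eqref{ass:main} survives on $[0,\tilde{\mathfrak t}]$.

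\textbf{Step 1 (a priori bounds).} Assumption \eqref{ass:main} together with $\mathcal E^h\geq \tfrac12|\vec m^h|^2/\varrho^h$ gives $\|\vec m^h\|_{L^\infty([0,\mathfrak t]\times\T)}\lesssim 1$ and $\inf\vartheta^h\gtrsim 1$, hence $S^h$ is bounded in $L^\infty([0,\mathfrak t];L^\infty(\T))$. Applying It\^o to $\varrho^h\log(\vartheta^h)$-type functionals (the scheme was designed entropy dissipative) and using Burkholder--Davis--Gundy plus \eqref{eq:HS} produces uniform moment bounds in $h$ for the relevant norms, valid up to $\mathfrak t$.

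\textbf{Step 2 (time regularity and tightness).} Reading the semidiscrete equation \eqref{eq:DG_standard_2} as an identity in $W^{-4,2}(\T)$ tested against a fixed smooth $\varphi$, the deterministic parts give a uniform $C^\alpha([0,\mathfrak t];W^{-4,2})$ bound, and the stochastic integral yields H\"older continuity with $\alpha<\tfrac12$ via BDG. Combined with the $L^\infty_t L^\gamma_x$ bounds from Step 1, this produces tightness of $(\varrho^h,\vec m^h)$ in $C_w([0,T];L^\gamma)\cap C([0,T];W^{-4,2})$ and of $S^h$ in $L^q_t W^{-\mathfrak p-4,2}_x$. The parametrised measures $\delta_{(\varrho^h,\vec m^h,S^h)}$ and the tensors $\vec m^h\otimes\vec m^h/\varrho^h$, $\varrho^h\vartheta^h$ are tight in the weak-$*$ topologies on the corresponding Young measure / Radon measure spaces (quasi-Polish, not Polish, which is why Jakubowski's variant is required). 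The stopping time $\mathfrak t$ is tight in $[0,\infty]$, and $W$ is tight in $C([0,T];\UU_0)$.

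\textbf{Step 3 (Jakubowski representation and limit equation).} On a new stochastic basis $(\tilde\Omega,\tilde\FF,(\tilde\FF_t),\tilde\p)$ we obtain new variables with the same laws, $\tilde\p$-a.s.\ convergent, together with a new stopping time $\tilde{\mathfrak t}\sim\mathfrak t$ and a cylindrical Wiener process $\tilde W$ (the latter identified via the L\'evy characterisation from the fact that the martingale structure of the stochastic integral is preserved under equality in law). Adaptedness to the canonical filtration requires the usual argument restricted to the stopped path space. Passing to the limit in the flux-differencing DG formulation \eqref{eq:DG_standard_2} requires: (a) consistency of the SBP discrete divergence against smooth test functions (a classical quadrature / interpolation error estimate together with the symmetry of the Ranocha flux \eqref{eq:Ranocha_flux}); (b) identifying nonlinear limits as expectations against the Young measure $\tilde\VV_{t,x}$; (c) encoding the gap between $\langle\tilde\VV_{t,x},\vec m'\otimes\vec m'/\varrho'\rangle$ and $\tilde{\vec m}\otimes\tilde{\vec m}/\tilde\varrho$ in the defect $\tilde{\mathcal R}_{\mathrm{conv}}$, and analogously for $\tilde{\mathcal R}_{\mathrm{press}}$; (d) lower semicontinuity of the total energy and of the entropy, yielding the inequalities in \eqref{eq:entr} and \eqref{eq:Ener}.

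\textbf{Step 4 (stochastic integral and stopping).} The stochastic integral is identified by the standard martingale method: show that the natural limit candidate martingale $M^{\tilde\varphi}_t$ and its quadratic (co)variation with $\tilde W$ match the ones coming from $\int_0^{t\wedge\tilde{\mathfrak t}}\int_\T\tilde\varrho\phi\cdot\tilde\varphi\,\dd x\,\dd\tilde W$, using that these variations are continuous functionals on the path space at which a.s.\ convergence holds. Here the main obstacle arises: because $\mathfrak t$ is random and only $\p(\mathfrak t>0)=1$, the a priori estimates are not uniform in $\omega$, so the usual $L^p(\Omega)$-passage has to be localised; we therefore stop at $\mathfrak t$ already at the level of the scheme and track $\mathfrak t$ as an additional coordinate in the tight sequence. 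The equality in law $\tilde{\mathfrak t}\sim\mathfrak t$ then follows automatically from Jakubowski, and adaptedness of $\tilde{\mathfrak t}$ to $(\tilde\FF_t)$ is obtained by showing $\{\tilde{\mathfrak t}\leq t\}$ lies in the completed $\sigma$-algebra generated by the stopped variables on $[0,t]$, using continuity of the stopping rule in the pathwise data. This time-stopping interplay with the Skorokhod space --- rather than any single analytic estimate --- is the genuinely delicate step.
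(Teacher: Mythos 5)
Your plan follows essentially the same route as the paper: uniform bounds from \eqref{ass:main}, the consistency formulation of the DG scheme, H\"older/BV time regularity, tightness of the joint law with $\mathfrak t$ and the error terms adjoined as extra coordinates, Jakubowski's representation on quasi-Polish spaces, and identification of the stochastic integral by the Brzezniak--Ondrej\'at martingale/quadratic-variation argument. (The paper organises the identification slightly differently --- it first transfers the \emph{finite-dimensional} SDE \eqref{eq:DG_standard_2} itself to the new probability space via \eqref{eq:1406a}--\eqref{eq:1406b} and only then passes to the limit in the consistency formulation --- but this is cosmetic.) Two points deserve correction.

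First, a genuine gap: in Step 3(d) you state that lower semicontinuity yields ``the inequalities in \eqref{eq:entr} and \eqref{eq:Ener}.'' But Definition \ref{E:dfn}(l) requires the total energy balance \eqref{eq:Ener} as an \emph{equality}, so the object you construct would not satisfy the definition. Weak lower semicontinuity of $(\varrho,\vec m,S)\mapsto \tfrac12|\vec m|^2/\varrho+c_v\varrho^\gamma\exp(S/(c_v\varrho))$ only gives $\tilde{\mathfrak E}_{t\wedge\tilde{\mathfrak t}}\leq\tilde{\mathfrak E}_0+\dots$, and an extra idea is needed to close this. The paper repairs the deficit by augmenting the pressure defect $\tilde{\mathcal R}_{\mathrm{press}}(t)$ with a spatially homogeneous non-negative measure $h(t)\,\dd x$ chosen to restore equality; this modification is admissible because $\tilde{\mathcal R}_{\mathrm{press}}$ enters the momentum equation only through $\int_{\T}\mathrm{div}\,\boldsymbol\varphi\,\dd\mathcal R_{\mathrm{press}}$, and $\int_{\T}h(t)\,\mathrm{div}\,\boldsymbol\varphi\,\dd x=0$. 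Without this (or an equivalent device) your limit object fails clause (l).

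Second, a smaller but incorrect claim in Step 1: from \eqref{ass:main} you cannot conclude $\inf\vartheta^h\gtrsim1$, hence not that $S^h$ is bounded in $L^\infty([0,\mathfrak t]\times\T)$. The assumption bounds $\varrho^h$ from below and $\mathcal E^h$ from above, but the internal energy $\mathcal E^h-\tfrac12|\vec m^h|^2/\varrho^h$, and with it $\vartheta^h$, may degenerate to zero, in which case $S^h\to-\infty$ pointwise. The paper only derives (and only needs) $S^h\in L^\infty([0,\mathfrak t];L^\gamma(\T))$, arguing as in the deterministic dissipative-weak-solution literature; your subsequent steps survive with this weaker bound, but the $L^\infty$ claim as written is false.
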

\begin{rmk}
A general framework for the numerical approximation of martingale solutions to nonlinear stochastic PDEs is given in \cite{OPW}. However, it is rather designed for parabolic problems and does not include the possibility for the solution to develop defect measures which is crucial for hyperbolic problems as considered here.
\end{rmk}
If we have a strong pathwise solution (as described in Definition \ref{def:compstrong}) we can obtain a stronger result. The strong solution exists on a given stochastic basis up to the stopping time $\mathfrak s$. For the strong solution, there is no vaccum and the energy is bounded, i.e., a counterpart of \eqref{ass:main} is satisfied. Hence one may hope that $\mathfrak s$ is significantly smaller than $\mathfrak t$. For $M\in\N$
we introduce the stopping time
\[
\mathfrak s_{M}=\inf\bigg \{t\in (0,\mathfrak{s})|\quad\|(r,\Theta,\vec v)(s,\cdot)\|_{W^{\mathfrak p+1,\infty}(\T)}>M\bigg\},
\]
where $\mathfrak p$ is the degree of the polynomials in the DG solution space, cf. Section \ref{sec:DG}.
Since $[r,\Theta,\vec v]$ is a strong solution,
\[
\p\left[\lim_{M\to \infty}\mathfrak s_M=\mathfrak{s}\right]=1.
\]
Our main result regarding convergence is the following theorem.
\begin{thm}\label{thm:main2}
Let $(\Omega, \FF, (\FF_t)_{t\geq 0},\p)$ be a complete stochastic basis with a probability measure $\p$ on $(\Omega,\FF)$ and right-continuous filtration $(\FF_t)_{t\geq 0}$, $W$ an $(\FF_t)$-adapted Wiener process and $\phi\in L_2(\mathfrak U;W^{\ell,2}(\mathcal O))$ for some $\ell\geq\max\{4,\mathfrak p+3\}$. Let  $\mathbf{U}^h=(\varrho^h,\vec m^h,\mathcal E^h)$ be the solution to \eqref{eq:DG_standard_2}. Suppose there is an $(\mathfrak F_t)$-stopping time $\mathfrak t$ such that \eqref{ass:main} holds.
Let the triplet $[r,\Theta,\vec v]$  and a stopping time $\mathfrak{s}$ be a strong solution in the sense of Definition \ref{def:compstrong} with initial data
\begin{align*}
\varrho_0,\vec m_0,\Theta_0\in W^{\mathfrak p+3,2}(\T),\quad\inf_{x\in\T}\varrho_0>K,\quad \sup_{x\in\T}\mathcal E_0<K.
\end{align*}
\begin{enumerate}
\item[(a)] Let $\mathfrak p=0$. Then it holds for all $t\in[0,T]$ and $M\in\N$
\begin{align}\label{eq:errorA}
\E \Big[\Big(\|\varrho^h-r\|^2_{L^2(\T)}+\|\vec u^h-\vec v\|^2_{L^2(\T)}+\|\vartheta^h-\Theta\|^2_{L^2(\T)}\Big)(t\wedge\mathfrak t\wedge\mathfrak s_M)\Big]\leq\,c h^{\frac{1}{2}}.
\end{align}
\item[(b)] For $\mathfrak p\geq1$, we further assume that there is some deterministic $\mathfrak c>0$ independent of $h$ such that
\begin{equation}\label{as_1}
|\mathbf{U}^h (\mathbf{x}_j ,t\wedge\mathfrak t) - \mathbf{U}^h (\mathbf{x}_i ,t\wedge \mathfrak t) | \leq \mathfrak c h \qquad \forall x_j, x_i \in \mathcal{K}.
\end{equation}
 Then, it holds for all $t\in[0,T]$ and $M\in\N$
\begin{align}\label{eq:errorB}
\E \Big[\Big(\|\varrho^h-r\|^2_{L^2(\T)}+\|\vec u^h-\vec v\|^2_{L^2(\T)}+\|\vartheta^h-\Theta\|^2_{L^2(\T)}\Big)(t\wedge\mathfrak t\wedge\mathfrak s_M)\Big]\leq\,c h.
\end{align}
\end{enumerate}
The constant $c$ above depends on $M$ and $K$ from \eqref{ass:main}. 
\end{thm}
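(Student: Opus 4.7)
The plan is to adapt the relative energy method, successful in the deterministic case (cf.~\cite{LuOe,KuLu,feireisl2021numerical}), to the present stochastic setting. I introduce the relative energy
\[
\mathcal E_{\mathrm{rel}}(\bU^h\,|\,\bU)(\tau) := \int_{\T}\Big[\tfrac{1}{2}\varrho^h|\vec u^h-\vec v|^2 + \varrho^h e(\varrho^h,\vartheta^h) - \text{linear correction}\Big](\tau)\,\dd x,
\]
comparing $\bU^h=(\varrho^h,\vec m^h,\mathcal E^h)$ with the strong solution $\bU=(r,r\vec v,\mathcal E(r,\vec v,\Theta))$; the linear correction is the standard thermodynamic term making $\mathcal E_{\mathrm{rel}}$ nonnegative and quadratically coercive around $\bU$ on the compact range fixed by $K$ and $M$. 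First I would derive an evolution inequality for $\mathcal E_{\mathrm{rel}}(\bU^h\,|\,\bU)(\tau\wedge\mathfrak t\wedge\mathfrak s_M)$ via It\^o's formula applied simultaneously to the discrete scheme and to the strong-solution equations. Crucially, both are driven by the same Wiener process $W$, so the leading-order noise in the relative velocity equation is proportional to $(\varrho^h-r)\phi$, and the resulting quadratic-variation correction has the form $\|\phi\|_{L^\infty}^2\int_{\T}|\varrho^h-r|^2\,\dd x$, which is already controlled by $\mathcal E_{\mathrm{rel}}$ itself thanks to the lower bound $\varrho^h\geq 1/K$.

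Next I would insert the nodal DG interpolants of $(r,\Theta,\vec v)$ as test functions in the discrete scheme \eqref{eq:DG_standard_2}. The summation-by-parts structure together with the entropy-conservative Ranocha flux-differencing \eqref{eq:Ranocha_flux} yields, up to residuals, the same algebraic identity as the continuous relative entropy calculation. The residuals split into (i) DG interpolation errors for $(r,\vec v,\Theta)$, of order $h^{\mathfrak p+1}$ under the regularity $W^{\mathfrak p+3,2}$; (ii) numerical flux dissipation at interfaces, contributing $O(h^{1/2})$ in the piecewise-constant case $\mathfrak p=0$ but improving to $O(h)$ for $\mathfrak p\geq 1$ once \eqref{as_1} is invoked to control interface jumps by $h$; (iii) quadrature errors in the volume flux-differencing operators, of equal or higher order. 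The net residual is thus $O(h^{1/2})$ for $\mathfrak p=0$ and $O(h)$ for $\mathfrak p\geq 1$.

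Finally, I would close the estimate by stochastic Gronwall. Upon taking expectation, the martingale parts vanish when stopped at $\mathfrak t\wedge\mathfrak s_M$; the remaining terms are bounded by $c(M,K)\,\E\,\mathcal E_{\mathrm{rel}}$ plus the $O(h^\alpha)$ residual, with the Gronwall coefficient controlled by $\|(r,\Theta,\vec v)\|_{W^{\mathfrak p+1,\infty}}\leq M$ on $[0,\mathfrak s_M]$. Coercivity of $\mathcal E_{\mathrm{rel}}$ then converts the resulting bound into the claimed $L^2$ estimates on $\varrho^h-r$, $\vec u^h-\vec v$ and $\vartheta^h-\Theta$. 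The main obstacle, in my view, is the stopping-time bookkeeping: since $\mathfrak t$ is defined through the discrete solution itself and hence depends on $h$, one must ensure that all It\^o and Burkholder--Davis--Gundy estimates are performed at $\mathfrak t\wedge\mathfrak s_M$ in an $h$-uniform way. A secondary difficulty is establishing the discrete relative entropy identity with remainders of exactly the advertised order in the $\mathfrak p\geq 1$ case: without the entropy-conservative flux differencing, spurious interface contributions would spoil the rate.
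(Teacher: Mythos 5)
Your proposal is correct and follows essentially the same route as the paper: a relative energy (ballistic free energy) functional for $(\varrho^h,\vec m^h,\mathcal E^h)$ against the strong solution, an It\^o-based relative entropy inequality whose residual is the consistency error ($O(h^{1/2})$ for $\mathfrak p=0$ via the weak BV estimate, $O(h)$ for $\mathfrak p\geq1$ under \eqref{as_1}), Gronwall after taking expectations at $t\wedge\mathfrak t\wedge\mathfrak s_M$, and quadratic coercivity on the compact range fixed by $K$ and $M$. The only differences are cosmetic: the paper quotes the consistency formulation of \cite{LuOe} rather than re-deriving the discrete identity from the SBP/flux-differencing structure, and the It\^o corrections in fact cancel exactly (since $\mathbb D^s\vec v=\phi$) rather than leaving a term proportional to $\int_{\T}|\varrho^h-r|^2\,\dd x$ --- though the latter would in any case be absorbed by Gronwall.
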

\begin{rmk}
There is hardly any literature on the numerical approximation of local strong solutions to SPDEs. In fact, the only result which is comparable to Theorem \ref{thm:main2} we are aware of is \cite{BrDo}. In  \cite{BrDo} local strong solutions to the 3D incompressible stochastic Navier--Stokes equations are considered.
\end{rmk}
\begin{rmk}\label{rmk:noise}
In the case of a truly infinite dimensional noise, a practical implementation requires an approximation by a finite sum, i.e., replacing $W$ by $W^N=\sum_{k=1}^Ne_k\beta_k$ for some large $N\in\N$. This leads to the additional error term
\begin{align*}
\sum_{k=N+1}^\infty\int_0^{\tau\wedge\mathfrak s_m\wedge \mathfrak t}\int_{\T}\varrho^h|\phi e_k|^2\dx\dt\leq c(K)\sum_{k=N+1}^\infty\int_{\T}|\phi e_k|^2\dx
\end{align*}
in the proof of Theorem \ref{thm:main2}. Its size can be controlled by the choice of $N$.
\end{rmk}

\section{Consistency}\label{mvsproof}

The aim of this section is to prove Theorem \ref{thm:main1}. First of all, we can use the boundedness of $\mathcal E^h$ from \eqref{ass:main}. Arguing as in 
\cite{HoFeBr}
we can deduce the following bounds
\begin{align}\label{eq:2512}\begin{aligned}
    \varrho^h&\in L^{\infty}([0,\mathfrak t];L^{\gamma}(\T)), \\
    \vec m^h &\in L^{\infty}([0,\mathfrak t];L^{\frac{2\gamma}{\gamma +1}}(\T)),\\
    \frac{\vec m^h}{\sqrt{\varrho^h}}&\in L^{\infty}([0,\mathfrak t];L^{2}(\T)),\\
    \frac{\vec m^h \otimes \vec m^h}{\varrho^h}&\in L^{\infty}([0,\mathfrak t];L^{1}(\T)),\\
    S^h&\in L^{\infty}([0,\mathfrak t];L^{\gamma}(\T)),\\
    \frac{ S^h}{\sqrt{\varrho^h}}&\in L^{\infty}([0,\mathfrak t];L^{2\gamma}(\T)),
\end{aligned}
\end{align}
which hold $\p$-a.s. uniformly in $h$.

\subsection{The consistency formulation}
Due to our shape regular mesh, cf. \cite{KuLu}, and by following line by line the proof of Theorem 4.2 in \cite{LuOe} we obtain for
  all $\tau \in (0,T]$: 
 \begin{itemize}
  \item for all $\varphi \in C^{p+1}(\overline{\T})$:
  \begin{equation}\label{eq:consistency_rho}
 \left[  \int_{\T} \varrho^h \varphi \diff \bx \right]_{t=0}^{t=\tau} =\int_0^\tau \int_{\T} \bm^h \cdot \nabla_{\bx} \varphi \diff \bx \diff t
 +\int_0^\tau e_{\varrho^h} (t,\varphi) \diff t;
  \end{equation}

  \item for all $\boldsymbol{\varphi}\in C^{p+1}(\overline{\T};\R^2)$:
  \begin{equation}\label{eq:consistency_m}
\begin{aligned}
 \left[  \int_{\T} \bm^h \boldsymbol{\varphi} \diff \bx \right]_{t=0}^{t=\tau} &=\int_0^\tau \int_{\T} \Big(\frac{\bm^h\otimes\bm^h}{\varrho^h} :  \nabla_{\bx}\boldsymbol{\varphi}  +\varrho^h\vartheta^h \mathrm{div}_{\bx} \boldsymbol{\varphi} \Big)\diff \bx \diff t\\
&+\int_0^\tau\int_{\T} \varrho^h\phi\cdot\Pi_h\boldsymbol\varphi\diff \bx \,\dd W + \int_0^\tau e_{\bm^h} (t,\boldsymbol{\varphi}) \diff t,
\end{aligned}
  \end{equation}
  where $\Pi_h$ is the projection onto the solution space $\mathcal V^h$;
  \item for all $\psi \in C^{p+1}( \overline{\T}), \; \psi\geq 0$:
  \begin{equation}\label{eq:consistency_S}
 \left[  \int_{\T} S^h \psi \diff \bx \right]_{t=0}^{t=\tau} \leq \int_0^\tau \int_{\T} S^h \frac{\bm^h}{\varrho^h} \cdot \nabla_\bx {\psi} \diff \bx \diff t + \int_0^\tau e_{S^h} (t,\psi) \diff t;
  \end{equation}
  \item We have the energy balance
\begin{align}  \label{eq:consistency_E}
\int_{\T} \mathcal E^h(\tau)\diff \bx =\int_{\T} \mathcal E^h_0 \diff \bx+\frac{1}{2}\int_0^\tau\|\sqrt{\varrho^h}\phi\|_{L_2}^2 \,\dd t+ \int_0^\tau\int_{\T}\phi\cdot \vec m^h\diff\bx\,\dd W
\end{align}
  \item The error  $ \mathbf e^h:=(e_{j\varrho^h},e_{\vec m^h}, e_{S^h})$ tends to zero under mesh refinement: We have $\p$-a.s.
  \begin{equation}\label{eq:consistency_error}
   \norm{\mathbf e^h(\varphi,\boldsymbol{\varphi},\psi)}_{L^1(0,\mathfrak t)} \to 0 \text{ if } h\to 0. 
  \end{equation}
 \end{itemize}
Note that the proof of \eqref{eq:consistency_error} in \cite{LuOe} is only based on the bounds from \eqref{ass:main}, which is the reason why it only holds up to the stopping time in our case. We also remark that, in fact, a stronger statement is proved in \cite{LuOe} for the case $\mathfrak p\geq 1$ and under the additional assumption \eqref{as_1}. In our case it can be written as
   \begin{equation}\label{eq:consistency_errorB}
   \sup_{t\in[0,\mathfrak t]}|\mathbf e^h(\varphi,\boldsymbol{\varphi},\psi)|\leq\,ch\|(\varphi,\boldsymbol{\varphi},\psi)\|_{W^{\mathfrak p+1,\infty}(\T)},
  \end{equation}
where $c=c(\mathcal{K})$ with $\mathcal{K}$ from \eqref{ass:main}.

If $\mathfrak p=0$ instead, it is proved  using the Godunov/(local) Lax-Friedrichs fluxes that
   \begin{equation}\label{eq:consistency_errorA}
   \sup_{t\in[0,\mathfrak t]}|\mathbf e^h(\varphi,\boldsymbol{\varphi},\psi)|\leq\,ch^{\frac{1}{2}}\|(\varphi,\boldsymbol{\varphi},\psi)\|_{W^{1,\infty}(\T)}.
  \end{equation}
The latter is based on a weak BV-estimate.
\begin{rmk}
%In the consistency study from \cite{LuOe}  as well as in \cite{AbLu, KuLu} no assumptions were made on the smoothness of the solution. Based on the work \cite{LuShu}, a priori error estimates regarding higher order  have been proven with sufficient smoothness of the underlying solution. Furthermore, in many numerical simulations high-order accuracy of DG methods have also been 
%verified \cite{oeff2023}.% We expect for certain numerical simulations therefore high-order convergence rates. 
%Note that it is currently work in progress to develop new deterministic a priori error bounds for the DG (as well as RD and MCL) by means of the consistency analysis similar to \cite{LuSh}. 
It should be stressed out that in the case $\mathfrak p=0$ we are in the classical FV context. The $L^1$ error in the relative energy with $1/2$ corresponds to $1/4$ in $L^2$ of the conserved (primitive) quantities.  Under the additional assumption that the total variation of the numerical solution is uniformly bounded  $1/2$ in the $L^2$-norm is achieved, cf. \cite{LuSh}.
The study in \cite{LuSh} employs the Godunov method. 
Nonetheless, the findings can be applied to the Lax-Friedrichs FV method by following the same analysis.
Moreover, it is important to highlight that, in the examination of consistency within the DG framework, the additional assumption \eqref{as_1} was introduced regarding the variation among the numerical solution at different nodal points within a single element and their proportional adjustments concerning the grid parameter $h$. The application of limiters was also disregarded. Including the application of limiters, a result similar to the FV setting can only be expected.\footnote{However, it is still unclear how limiters can be constructed and incorporated for the stochastic setting which will be part of future research.} This also is in line with the analysis of the flux corrected FE method approach using  monolithic convex limiting and residual distribution schemes. In these contexts, one can directly follow the analysis presented in \cite{LuSh}, expecting convergence rates of $1/4$ or $1/2$ only if the total variation of the numerical solution remains uniformly bounded.
Note that it is currently work in progress to develop new deterministic a priori error bounds for the DG method (as well as RD and MCL) by means of the consistency analysis.
% It's important to emphasize that even under stricter regularity assumptions, obtaining a priori bounds in the context of MCL is challenging due to the highly nonlinear nature of the scheme, even for the linear transport equation. For a more detailed discussion on consistency estimation of various FE schemes and a priori bounds, we refer to the forthcoming paper \cite{LuOe2}.    \\

\end{rmk}

 In order to prepare the proof of compactness we need some time-regularity of the variables $\varrho^h,\vec m^h$ and $S^h$.
 For the stochastic term in the momentum equation we have
 using continuity of $\Pi_h$
\begin{align*}
\E \left[ \left \|\int_{0}^{\cdot} \Pi_h(\varrho^h \phi )\,\dd W\right\|_{C^{\alpha}([0,T];L^2(\T))}^{q}\right] \leq\, c \,\E \left[ \int_{0}^{T}\Big\|\Pi_h(\sqrt{\varrho^h} \phi)\Big\|^{q}_{L_2(\UU,L^2(\T))} \, \dd t \right]
\end{align*}
 for all $\alpha \in (0,1/2-1/q)$ and $q>2$, see [\cite{Brt_1}, Lemma 9.1.3. b)] or [\cite{Hofm}, Lemma 4.6]). Using continuity of $\Pi_h$ we end up with
\begin{align*}
\E \left[ \left \|\int_{0}^{\cdot} \Pi_h(\varrho^h \phi) \,\dd W\right\|_{C^{\alpha}([0,T];L^2(\T))}^{q}\right] \leq\, c \,\E \left[ \int_{0}^{T}\Big\|\sqrt{\varrho^h} \phi\Big\|^{q}_{L_2(\UU,L^2(\T))} \, \dd t \right]  \leq\, c(\overline{\varrho},q,\phi,T).
\end{align*}
The final estimate follows from $\int_{\T}\varrho^h\dx=\int_{\T}\varrho_0\dx$ and \eqref{eq:HS}.

Now combining the deterministic estimates from \eqref{eq:2512} and \eqref{eq:consistency_errorB} with the stochastic estimate above, and using the embeddings $W^{1,2}_t \hookrightarrow C^{\alpha}_{t}$ and $L^2_x \hookrightarrow W^{-3,2}_x$ shows 
 \begin{equation}\label{estimate}
\E \left[\big\|\vec{m}^h\big\|_{C^{\alpha}([0,\mathfrak t];W^{-\mathfrak p-3,2}(\T))} \right] \leq c(T)
 \end{equation}
for all $\alpha < 1/2$ as a consequence of \eqref{eq:consistency_m}. Similarly, the continuity equation \eqref{eq:consistency_rho} together with \eqref{eq:2512} and \eqref{eq:consistency_errorB} yields $\partial_t \varrho_{\varepsilon} \in L^{\infty}(0,\mathfrak t;W^{-\mathfrak p-3,2}(\T))$ $\p$-a.s. In particular,  we obtain

\[
\E\left[\big\| \varrho^h\big\|_{C^{\alpha}([0,\mathfrak t];W^{-\mathfrak p-3,2}(\T))}\right] \leq C.
\]
Finally, we deduce
\begin{align} \label{eq:12.01}
\E\left[\|  S^h\|_{\mathrm{BV}([0,\mathfrak t];W^{-\mathfrak p-3,2}(\T))}\right] \leq C
\end{align}
from \eqref{eq:consistency_S}.

\subsection{Compactness Argument}

%Our goal here is to show tightness of the approximate solutions using the following compact embeddings. 
%
%\begin{equation}\label{eqnA}
%C^{\alpha}([0,T];W^{-3,2}(\T)) \cap L^{\infty}(L^{\frac{2\gamma}{\gamma +1}}(\T)) \hookrightarrow \hookrightarrow C([0,T];W^{-4,2}(\T))\cap C_w (L^{\frac{2\gamma}{\gamma +1}}(\T)).
%\end{equation}
%
%\begin{equation}\label{eqnB}
% C^{\alpha}([0,T];W^{-3,\frac{2\gamma}{\gamma+1}})\cap L^{\infty}(0,T;L^{\gamma}(\T)) \hookrightarrow\hookrightarrow  C([0,T];W^{-4,2}(\T))\cap C_w(0,T; L^{\gamma}(\T)). 
%\end{equation}

We set the spaces (where $q<\infty$ is arbitrary):

\begin{eqnarray*}
%\mathscr{X}_{\varrho_0}: = L^{\gamma}(\T), & \quad& \mathscr{X}_{\vec m_0} = L^{\frac{2\gamma}{\gamma+1}}(\T), \\
\mathscr{X}_{\Vec{m}}: = C([0,T];W^{-\mathfrak p-4,2}(\T))\cap C_w ([0,T];L^{\frac{2\gamma}{\gamma +1}}(\T)),&\quad& \mathscr{X}_{W} :=C([0,T];\UU_0),\\
\mathscr{X}_{\varrho}:=C([0,T];W^{-\mathfrak p-4,2}(\T))\cap C_w([0,T]; L^{\gamma}(\T)), &\quad& \mathscr{X}_{\text{C}}:=L^{\infty}_{w*}(0,T;\mathcal{M}(\T,\R^{3\times3})),\\
\mathscr{X}_{ S}:=L^q(0,T;W^{-\mathfrak p-4,2}(\T))\cap L^\infty_{w^\ast}(0,T; L^{\gamma}(\T)), &\qquad& \mathscr{X}_{\text{Q}}:=L^{\infty}_{w*}(0,T;\mathcal{M}(\T,\R^3)),\\
\mathscr X_{\mathfrak t}=\R,\qquad\mathscr{X}_{\text{P}}:=L^{\infty}_{w*}(0,T;\mathcal{M}^{+}(\T,\R)), &\qquad&
\mathscr{X}_{\text{e}}:=L^2(0,T;W^{-\mathfrak p-4}(\T)),
\end{eqnarray*}
 with respect to weak-* topology for all spaces with $L^{\infty}(\cdot,\mathcal{M}^{\cdot}(\cdot))$. Furthermore, for $T>0$ fixed, we choose the product path space
\begin{equation}
  \mathfrak{X}:=\mathscr{X}_{\varrho }\times \mathscr{X}_{ \Vec{m}} \times\mathscr{X}_{ S} \times\mathscr{X}_{\text{prss}} \times\mathscr{X}_{\text{conv}}\times\mathscr X_{\text{Q}}\times \mathscr{X}_{\text{e}}\times \mathscr{X}_{W}\times \mathscr{X}_{\mathfrak t},
\end{equation}
%
%with the following laws:
%\begin{equation*}
%\begin{cases}
%\mu_{{\vec{m}^h}} \ \text{ is the law of }\ \vec{m}^h(\cdot\wedge\mathfrak t) \ \text{on} \   C([0,T];W^{-p-4,2}(\T))\cap C_w ([0,T];L^{\frac{2\gamma}{\gamma +1}}(\T)),\\
%\mu_{\varrho^h} \ \text{ is the law of }\ \varrho^h(\cdot,\wedge\mathfrak t) \ \text{on} \  C([0,T];W^{-p-4,2}(\mathcal O))\cap C_w([0,T]; L^{\gamma}(\T)),\\
%\mu_{ S^h} \ \text{ is the law of }\ S^h(\cdot\wedge\mathfrak t) \ \text{on} \  C([0,T];W^{-p-4,2})\cap C_w([0,T]; L^{\gamma}(\T)),\\
%\mu_{ \mathbf{e}^h} \ \text{ is the law of }\ \mathbf{e}^h(\cdot\wedge\mathfrak t) \ \text{on} \  L^2([0,T];W^{-p-4,2}),\\
%\mu_W \ \text{ is the law of } \ W \ \text{on} \ C([0,T],\UU_0), \\ 
%\mu_{\mathfrak t} \ \text{ is the law of } \ \mathfrak t \ \text{on} \ \R.  
%\end{cases}
%\end{equation*}
%In addition, let $\mu_{\mathrm{C}^h}, \mu_{\mathrm{P}^h} $ and $\mu_{\mathrm{Q}^h}$ denote the laws of
where $\mathscr{X}_{\text{prss}}$, $\mathscr{X}_{\text{conv}}$ and $\mathscr X_{\text{Q}}$ are the path spaces for the nonlinear terms
\[
 \mathrm{P}^h:=(\varrho^h)^{\gamma} \exp{\left(  \frac{ S^h}{c_v \varrho^h}\right)},\qquad\mathrm{C}^h:=\frac{\vec m^h\otimes \vec m^h}{\varrho^h},\qquad \mathrm Q^h:= S^h \frac{\vec m^h}{\varrho^h},
\]
respectively. 
%Let $\bfr_T$ be the restriction operator which restricts measurable functions (or space-time distributions) defined on $(0,\infty)$ to $(0,T)$. 
We denote by $\mathscr B_{\mathfrak X}$ the Borelian $\sigma$-algebra on $\mathfrak X$ and by $$\mathcal{L}[\varrho^h(\cdot\wedge\mathfrak t),\vec{m}^h(\cdot\wedge\mathfrak t), \mathrm S^h(\cdot\wedge\mathfrak t), P^h(\cdot\wedge\mathfrak t), \mathrm C^h(\cdot\wedge\mathfrak t), \mathrm Q^h,\mathbf e^h(\cdot\wedge\mathfrak t), W,\mathfrak t]$$ the probability law on $\mathfrak{X}$. 
As in \cite{Mo} we can infer that it is a sequence of tight measures on $\mathfrak{X}$. Note that the variables $\mathfrak t$ and $\mathbf e^h$ are not included in \cite{Mo}. The law of $\mathfrak t$ is clearly tight as being a Radon measure on the Polish space $\R$, while tightness of $\mu_{ \mathbf{e}^h}$ follows from \eqref{eq:consistency_errorB}. Also, tightness of the law of $S^h$ follows from \eqref{eq:12.01} and Helly’s selection theorem.
%Since $T$ was arbitrary chosen we deduce that $\mathcal{L}[\varrho^h,\vec{m}^h, S^h,P^h,C^h,Q^h,\mathbf e^h, W,\mathfrak t]$ is tight on $\mathfrak{X}$. 
In view of Jakubowksi's version of the Skorokhod representation theorem \cite{Jaku} (see also Brzezniak et al. \cite{Brzez}, and \cite[Section 2.8]{FrBrHo} for property c), we have the following proposition.

\begin{prop} \label{skorokhod}
There exists a nullsequence $(h_m)_{m \in \N}$, a complete probability space  $(\Tilde{\Omega}, \Tilde{\FF},\Tilde{\p})$ with $(\mathfrak{X},\mathscr{B}_{\mathfrak{X}})$-valued random variables  $(\Tilde{\varrho}^{h_m},\Tilde{\vec{m}}^{h_m},\Tilde{ S}^{h_m}, \Tilde{P}^{h_m}, \Tilde{C}^{h_m},\Tilde{Q}^{h_m},\Tilde{\mathbf e^{h_m}} ,\Tilde{W}^{h_m},\Tilde{\mathfrak t}^{h_m})$, $m \in \N$,\\ and $(\Tilde{\varrho},\Tilde{\vec{m}}, \Tilde{ S},\Tilde{P},\Tilde{C},\Tilde{Q},\tilde{\mathbf e}, \Tilde{W},\tilde{\mathfrak t})$ such that
\begin{itemize}
    \item [(a)] For all $m \in \N$ the law of $$(\Tilde{\varrho}^{h_m},\Tilde{\vec{m}}^{h_m},\Tilde{ S}^{h_m}, \Tilde{P}^{h_m}, \Tilde{C}^{h_m},\Tilde{Q}^{h_m},\Tilde{\mathbf e^{h_m}} ,\Tilde{W}^{h_m},\Tilde{\mathfrak t}^{h_m})$$ on $\mathfrak{X}$ coincides with $$\mathcal{L}[\varrho^{h_m}(\cdot\wedge\mathfrak t),\vec{m}^{h_m}(\cdot\wedge\mathfrak t), \mathrm S^{h_m}(\cdot\wedge\mathfrak t), P^{h_m}(\cdot\wedge\mathfrak t), \mathrm C^{h_m}(\cdot\wedge\mathfrak t), \mathrm Q^{h_m},\mathbf e^{h_m}(\cdot\wedge\mathfrak t), W,\mathfrak t];$$ 
%    \item[(b)] The law of $$(\Tilde{\varrho},\Tilde{\vec{m}}, \Tilde{ S},\Tilde{\vec U},\Tilde{P},\Tilde{C},\Tilde{Q}, \Tilde{W})$$ is a Radon measure on $(\mathfrak{X},\mathscr{B}_{\mathfrak{X}});$
    \item[(b)] The sequence 
$$(\Tilde{\varrho}^{h_m},\Tilde{\vec{m}}^{h_m},\Tilde{ S}^{h_m}, \Tilde{P}^{h_m}, \Tilde{C}^{h_m},\Tilde{Q}^{h_m},\Tilde{\mathbf e^{h_m}} ,\Tilde{W}^{h_m},\Tilde{\mathfrak t}^{h_m}), \,\,m \in \N,$$ converges $\Tilde{\p}$-almost surely to  $$(\Tilde{\varrho},\Tilde{\vec{m}}, \Tilde{ S},\Tilde{P},\Tilde{C},\Tilde{Q},\tilde{\mathbf e}, \Tilde{W},\tilde{\mathfrak t})$$ in the topology of $\mathfrak{X}$, i.e.
    \begin{equation}
        \begin{cases}
%        \Tilde{\varrho}_{0,h_m} \to \Tilde{\varrho}_0 \,\, \text{in}\, L^{\gamma}(\T), \\
%        \Tilde{\varrho}_{0,h_m}\Tilde{\vec{u}}_{0,h_m} \to \Tilde{\vec{m}}_{0} \, \, \text{in}\, \, L^{\frac{2\gamma}{\gamma+1}}(\T),\\
%        \Tilde{ S}_{0,h_m} \to \Tilde{S_0} \,\, \text{in }\,\, C([0,T];W^{-4,2})\cap C_w(0,T; L^{\gamma}(\T)),\\
         %\color{red} \Tilde{ S}_{0,h_m} \to \overline{\Tilde{S_0}} \,\,
        %\text{in} \,\, L_{w^*}^{\infty}(\T;\mathcal{P}(A))\\
        \Tilde{\varrho}^{h_m} \to \Tilde{\varrho} \,\, \text{in }\,\, C([0,T];W^{-\mathfrak p-4,2})\cap C_w([0,T]; L^{\gamma}(\T)),\\
        \Tilde{S}^{h_m} \to \Tilde{S} \,\, \text{in }\,\, L^q(0,T;W^{-\mathfrak p-4,2}(\T)),\\
       \Tilde{S}^{h_m} \rightharpoonup^\ast \Tilde{S} \,\, \text{in }\,\,  L^\infty(0,T; L^{\gamma}(\T)),\\
        %\color{red}\Tilde{ S}_{h_m} \to \overline{\Tilde{S}} \,\,
        %\text{in} \,\, L_{w^*}^{\infty}(Q;\mathcal{P}(A))\\
       \Tilde{\vec{m}}_{h_m} \to \Tilde{\vec m} \, \, \text{in}\, \, C([0,T];W^{-\mathfrak p-4,2}(\T))\cap C_w ([0,T];L^{\frac{2\gamma}{\gamma +1}}(\T)),\\
        \Tilde{P}^{h_m} \to \overline{\Tilde{P}} \,\, \text{in} \,\, L_{w^*}^{\infty}(0,T;\mathcal{M}^{+}(\T,\R)),\\
        \Tilde{C}^{h_m} \to \overline{\Tilde{C}} \,\, \text{in} \,\, L_{w^*}^{\infty}(0,T;\mathcal{M}^{+}(\T,\R^{3\times 3})), \\
        \Tilde{Q}^{h_m} \to \overline{\Tilde{Q}} \,\, \text{in} \,\, L_{w^*}^{\infty}(0,T;\mathcal{M}(\T,\R^3)), \\
       \Tilde{\vec{e}}^{h_m} \to 0 \, \, \text{in}\, \, L^2(0,T;W^{-\mathfrak p-4,2}(\T)),\\
         \Tilde{W}^{h_m} \to \Tilde{W} \,\, \text{in} \,\, C([0,T];\UU_0),\\
         \Tilde{\mathfrak t}^{h_m} \to \Tilde{\mathfrak t} \,\, \text{in} \,\, \R,
        \end{cases}
    \end{equation}
    $\Tilde{\p}$-a.s.
    \item[(c)] For any Carath\'eodory function $H =H(t,x,\varrho,\vec m, S)$ where $(t,x)\in (0,T)\times\T,\, \, (\varrho,\vec m,S)\in \R^5$, satisfying for some $q_1,q_2,q_3>0$ the growth condition
    \[
    H(t,x,\varrho,\vec m, S)\lesssim 1 +|\varrho|^{q_1}+|\vec m|^{q_2}+|S|^{q_2}
    \]
    uniformly in $(t,x)$, we denote by $\overline{H(\varrho,\vec m, S)}(t,x) =\langle \mathcal{V}_{t,x},H \rangle$. Then it holds
    \[
    H(\tilde{\varrho}^{h_m},\tilde{\vec m}^{h_m}, \tilde{S}^{h_m}) \rightharpoonup \overline{H(\tilde{\varrho},\tilde{\vec m}, \tilde{S})}\quad\text{in}\,\,L^{k}((0,T)\times\T)
    \]
    $\tilde{\p}$-a.s. as  $m \to \infty$ for all $1 < k\leq\frac{\gamma+1}{q_1}\wedge \frac{2}{q_2}$.
    
\end{itemize}
\end{prop}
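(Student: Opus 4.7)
The plan is to establish tightness of the joint law of the truncated sequence
\[
(\varrho^{h_m}(\cdot\wedge\mathfrak t),\vec m^{h_m}(\cdot\wedge\mathfrak t),S^{h_m}(\cdot\wedge\mathfrak t),P^{h_m}(\cdot\wedge\mathfrak t),C^{h_m}(\cdot\wedge\mathfrak t),Q^{h_m}(\cdot\wedge\mathfrak t),\mathbf e^{h_m}(\cdot\wedge\mathfrak t),W,\mathfrak t)
\]
on the product space $\mathfrak X$, and then invoke Jakubowski's extension of the Skorokhod representation theorem \cite{Jaku}, which accommodates the non-Polish weak-star factors. For the marginal tightness, the bounds \eqref{eq:2512} together with the H\"older estimate \eqref{estimate} and the analogous one for $\varrho^h$ derived from \eqref{eq:consistency_rho} yield tightness on $\mathscr X_\varrho$ and $\mathscr X_{\vec m}$ via Ascoli--Arzel\`a applied on the compact embedding $W^{-\mathfrak p-3,2} \hookrightarrow W^{-\mathfrak p-4,2}$, combined with the weak continuity in $L^\gamma$ resp.\ $L^{2\gamma/(\gamma+1)}$. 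Tightness of $S^h$ on $\mathscr X_S$ follows from \eqref{eq:12.01} and Helly's selection theorem. The nonlinear functionals $P^h,C^h,Q^h$ are $\p$-a.s.\ uniformly bounded in $L^\infty(0,T;\mathcal M(\T;\cdot))$ thanks to \eqref{eq:2512}, so Banach--Alaoglu provides weak-star tightness. Finally, $\mathbf e^h\to 0$ in $L^2(0,T;W^{-\mathfrak p-4,2})$ in view of \eqref{eq:consistency_errorB} (hence trivially tight), while tightness of $W$ in $C([0,T];\mathfrak U_0)$ and of the scalar $\mathfrak t$ is standard.

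Second, I would verify Jakubowski's separability condition by exhibiting a countable family of continuous functionals on $\mathfrak X$ that separates points; on each weak-star factor $L^\infty_{w*}(0,T;\mathcal M(\T;\cdot))$ this is achieved via pairings with a countable dense subset of $C_c((0,T)\times\T;\cdot)$, and on the $C_w$-factors via pairings with a countable dense set of test functions. Invoking \cite{Jaku} then produces the new probability space $(\tilde\Omega,\tilde{\mathfrak F},\tilde\p)$, the tilded random variables whose joint laws coincide with the originals (giving part (a)), and $\tilde\p$-a.s.\ convergence in the product topology of $\mathfrak X$ (giving part (b)). Note that truncation by $\mathfrak t$ must be encoded consistently in every marginal, and that convergence $\tilde{\mathfrak t}^{h_m}\to\tilde{\mathfrak t}$ in $\R$ is automatic from the product convergence.

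For part (c), the uniform bounds transferred from $\mathfrak X$ to the tilded sequence (via equality of laws) place $(\tilde\varrho^{h_m},\tilde{\vec m}^{h_m},\tilde S^{h_m})$ in a bounded subset of $L^\infty_t(L^\gamma_x\times L^{2\gamma/(\gamma+1)}_x\times L^\gamma_x)$. I would then apply the fundamental theorem on generalized Young measures (see, e.g., \cite{FrBrHo}) to extract, along a further subsequence, a parametrized probability measure $\tilde{\mathcal V}_{t,x}$ on $A$ such that for every Carath\'eodory $H$ with the stated growth, the composition $H(\tilde\varrho^{h_m},\tilde{\vec m}^{h_m},\tilde S^{h_m})$ converges weakly in $L^k((0,T)\times\T)$ to $\langle\tilde{\mathcal V}_{t,x},H\rangle$ whenever $kq_1<\gamma+1$ and $kq_2<2$; the strict inequalities guarantee equi-integrability so that no concentration defect contaminates the $L^k$-limit itself (though such defects do survive in $\overline{\tilde P},\overline{\tilde C},\overline{\tilde Q}$).

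The main obstacle I anticipate is the rigorous treatment of the non-metric weak-star factors when checking Jakubowski's criterion together with tightness: one must restrict to bounded weak-star-compact subsets of $L^\infty_{w*}(0,T;\mathcal M(\T;\cdot))$, which are metrizable only because the predual is separable, and then verify that the resulting topology is compatible with the product-path convergence claimed in (b). A secondary technical point is the compatibility of the stopping-time truncation with all marginals; this forces one to choose, for each factor, a continuous evaluation map $X\mapsto X(\cdot\wedge\mathfrak t)$, which in particular must be continuous on the weak-star measure factors — manageable but requires care.
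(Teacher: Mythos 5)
Your proposal follows essentially the same route as the paper: tightness of the joint laws on the product space $\mathfrak X$ (via the uniform bounds \eqref{eq:2512}, the H\"older/BV time-regularity estimates, Helly's selection theorem for $S^h$, and weak-star compactness for the nonlinear terms), then Jakubowski's extension of the Skorokhod representation theorem for parts (a)--(b), and the fundamental theorem on (generalized) Young measures for part (c). The paper is far terser --- it delegates most of the marginal tightness to \cite{Mo} and only supplies the extra arguments for $\mathfrak t$, $\mathbf e^h$ and $S^h$ --- but the substance is the same, and your additional remarks on the quasi-Polish/separating-functional structure of the weak-star factors are exactly the standard justification implicitly invoked there.
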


To guarantee adaptedness of random variables and to ensure that the stochastic integral is well-defined on the new probability space we introduce filtrations for correct measurability. We simplify notation and set
$ \Tilde{\mathcal{X}}:=\left[\tilde{\varrho},\tilde{\vec{m}},\tilde{ S}\right]$.
Let $\tilde{\FF}_t$ and $\tilde{\FF}_{t}^{h_m}$ be the $\tilde{\p}$-augmented filtration of the correspnding random variables from Proposition \ref{skorokhod}, i.e.
\begin{align*}
\tilde{\FF}_t &= \sigma (\sigma(t\vee\tilde{\mathfrak t},\rr \Tilde{\mathcal{X}},\rr\tilde{W}) \cup \sigma_t( \tilde{P},\tilde{C},\tilde{Q},\tilde e)\cup\{ \mathcal{N} \in \tilde{\FF};\tilde{\p}(\mathcal{N})=0\}), t\geq 0,\\
\tilde{\FF}_{t}^{\varepsilon_m}&=\sigma(\sigma(t\vee\Tilde{\mathfrak t},\rr\Tilde{\mathcal{X}}^{\varepsilon_m},\rr\tilde{W}^{h_m})\cup \sigma_t (\tilde{P}^{h_m},\tilde{C}^{h_m},\tilde{Q}^{h_m},\tilde{\mathbf e}^{h_m})\cup\{ \mathcal{N} \in \tilde{\FF};\tilde{\p}(\mathcal{N})=0\}), t\geq 0.
\end{align*}

Here $\rr$ denotes the restriction operator to the interval $[0,t]$ on the path space and $\sigma_t$\footnote{The family of $\sigma$-fields $(\sigma_{t}[\vec V])_{t\geq 0}$ given as random distribution history of 
 \begin{equation*}
     \sigma_t[\vec V]:= \bigcap_{s>t}\sigma\left(\bigcup_{\varphi\in C_c^{\infty}(Q;\R^3)}\{\langle \vec V, \varphi \rangle <1 \}\cup \{N\in \FF, \p(N)=0\} \right)
 \end{equation*}
 is called the history of $\vec V$. In fact, any random distribution is adapted to its history, see\cite{FrBrHo} (Chap. 2.2).} denotes the history of a random distribution.

\subsection{Passage to the limit}
Noticing that \eqref{eq:DG_standard_2} is a finite dimensional problem we can write in the form
\begin{align}\label{eq:1406a}
X_t^m&=X^m(0)+\int_0^t\mu(X^m_s)\,\dd s+\int_0^t\Sigma(X_s)\,\dd  W,
\end{align}
where $(X_t^m)$ is an $\R^N$-valued stochastic process (in, fact $X_t^m$ corresponds to $\bvu$ from \eqref{eq:DG_standard_2} with $h=h_m$), $\mu:\R^N\rightarrow\R^N$ and $\Sigma:\R^N\rightarrow\R^{N\times N}$ are Lipschitz continuous functions in the range of $(X^m_{t\wedge \mathfrak t})$ due to \eqref{ass:main}.
We have to show that $\tilde\p$-a.s.
\begin{align}\label{eq:1406b}
\tilde X_{t\wedge \tilde{\mathfrak t}}^m&=\tilde X^m(0)+\int_0^{t\wedge \mathfrak t}\mu(\tilde X^m_s)\,\dd s+\int_0^{t\wedge \mathfrak t}\Sigma(\tilde X^m_s)\,\dd \tilde W^m,
\end{align}
i.e., the equation continues to hold on the new probability space.  Here $\tilde X_t^m$ relates to $(\tilde\varrho^{h_m},\Tilde{\vec{m}}^{h_m},\Tilde{\mathcal E}^{h_m})$ exactly as $X_t^m$ relates to $(\varrho^{h_m},{\vec{m}}^{h_m},{\mathcal E}^{h_m})$ via \eqref{eq:DG_standard_2}.
By Proposition \ref{skorokhod} the processes $(X^m_{t\wedge\mathfrak t})$ and $(\tilde X^m_t)=(\tilde X^m_{t\wedge\tilde{\mathfrak t}})$ coincide in law on $C^0([0,T];\R^N)$ and the same holds true for $W$ and $\tilde W$.
The mapping $X\mapsto \int_0^{\cdot}\mu(t,X_t)\dt$ is continuous on the $C^0([0,T];\R^N)$. However,
the mapping $(X,W)\mapsto \int_s^t\Sigma(r,X_r)\,\diff W$ is not.
So, we can not identify it immediately. We will make use of the fact that a martingale is uniquely determined by its quadratic variations.
This can be done with the help of an
elementary method by Brzezniak and Ondrej\'at \cite{BZ}.
We consider the functionals
\begin{align*}
\mathfrak M(Y,\mathfrak r)_t&=Y_{t\wedge \mathfrak r}-Y(0)-\int_0^{t\wedge \mathfrak r}\mu(r,Y_r)\diff r,\\
\mathfrak N(Y,\mathfrak r)_t&=\int_0^{t\wedge \mathfrak r}|\Sigma(r,Y_r)|^2\diff r,\quad \mathfrak L(Y,\mathfrak r)_t=\int_0^{t\wedge \mathfrak r}\Sigma(r,Y_r)\diff r
\end{align*}
Obviously $\mathfrak M$, $\mathfrak N$ and $\mathfrak L$ are continuous on the pathspace. Consequently, by equality of laws, we have
\begin{align*}
\mathfrak M(X^m,\mathfrak t)_{t}&\sim^d \mathfrak M(\tilde X^m,\tilde{\mathfrak t})_{t},\quad
\mathfrak N(X^m,\mathfrak t)_{t} \sim^d \mathfrak N(\tilde X^m,\tilde{\mathfrak t})_{t},\quad
\mathfrak L(X^m,\mathfrak t)_{t}\sim^d \mathfrak L(\tilde X^m,\tilde{\mathfrak t})_{t}.
\end{align*}
Let $\mathfrak M(X^m,\mathfrak t)_{s,t}$ denote the increment $\mathfrak M(X^m,\mathfrak t)_{t}-\mathfrak M(X^m,\mathfrak t)_{s}$ and similarly for $\mathfrak N(X^m,\mathfrak t)_{s,t}$ and $\mathfrak N_k(X^m,\mathfrak t)_{s,t}$. 
Note that the proof will be complete once we show that the process $\mathfrak M(\tilde X^m, \tilde{\mathfrak t})_t$ is an $(\tilde{\FF}_t^m)_{t\geq0}$-martingale and its quadratic and cross variations satisfy, respectively,
\begin{equation}\label{marta}
\begin{split}
\langle\langle \mathfrak M(\tilde X^m,\tilde{\mathfrak t})\rangle\rangle_{t}&=\mathfrak N(\tilde X^m,\tilde{\mathfrak t})_{t},\quad\langle\langle \mathfrak M(\tilde X^m,\tilde{\mathfrak t}),\tilde W\rangle\rangle_{t}=\mathfrak L(\tilde X^m,\tilde{\mathfrak t})_{t}.
\end{split}
\end{equation}
Indeed, in that case we have
\begin{align}\label{neu3108a}
\Big\langle\Big\langle \mathfrak M(\tilde X^m,\tilde{\mathfrak t})-\int_0^{\cdot\wedge \tilde{\mathfrak t}}\Sigma(s,\tilde X^m_s)\,\dd\tilde W\Big\rangle\Big\rangle_{t}=0
\end{align}
which implies the desired equation \eqref{eq:1406b}. %replacing $t$ by $t\wedge \tilde{\mathfrak t}$.
Let us verify \eqref{marta}. To this end, we fix times $s,t\in[0,T]$ such that $s<t$ and let
$$h:\R\times C^0([0,s];\R^N)\rightarrow [0,1]$$
be a continuous function.
Since $\mathfrak M(X^m,\mathfrak t)$
is a square integrable $(\F_t)_{t\geq0}$-martingale, we infer that
$$\big[\mathfrak M(X^m,\mathfrak t)\big]^2-\mathfrak N(X^m,\mathfrak t),\quad \mathfrak M(X^m,\mathfrak t)W-\mathfrak L(X^m,\mathfrak t),$$
are $(\FF_t)_{t\geq0}$-martingales.
Let $\bfr_s$ be the restriction of a function to the interval $[0,s]$. Then it follows from the equality of laws that
\begin{equation}\label{exp11}
\tilde{\E}\big[\,h\big(s\vee\tilde{\mathfrak t},\bfr_s\tilde X^m,\bfr_s\tilde{W}^m\big)\mathfrak M(\tilde{\mathfrak t},\tilde X^m,\tilde{\mathfrak t})_{s,t}\big]=\E \big[\,h\big(s\vee \mathfrak t,\bfr_sX^m,\bfr_sW\big)\mathfrak M(X^m,\mathfrak t)_{s,t}\big]=0,
\end{equation}
\begin{equation}\label{exp21}
\begin{split}
&\tilde{\E}\bigg[\,h\big(s\vee\tilde{\mathfrak t},\bfr_s\tilde X^m,\bfr_s\tilde{W}^m\big)\Big([\mathfrak M(\tilde X^m,\tilde{\mathfrak t})^2]_{s,t}-\mathfrak N(\tilde X^m,\tilde{\mathfrak t})_{s,t}\Big)\bigg]\\
&=\E\bigg[\,h\big(s\vee{\mathfrak t},\bfr_sX^m,\bfr_sW^m\big)\Big([\mathfrak M(X^m,\mathfrak t)^2]_{s,t}-\mathfrak N(X^m,\mathfrak t)_{s,t}\Big)\bigg]=0,
\end{split}
\end{equation}
\begin{equation}\label{exp31}
\begin{split}
&\tilde{\E}\bigg[\,h\big(s\vee\tilde{\mathfrak t},\bfr_s\tilde X^m,\bfr_s\tilde{W}^m\big)\Big([\mathfrak M(\tilde X^m,\tilde{\mathfrak t})\tilde{W}^m]_{s,t}-\mathfrak L(\tilde X^m,\tilde{\mathfrak t})_{s,t}\Big)\bigg]\\
&=\E\bigg[\,h\big(s\vee{\mathfrak t},\bfr_sX^m,\bfr_sW\big)\Big([\mathfrak M(X^m,\mathfrak t)W]_{s,t}-\mathfrak L(X^m,\mathfrak t)_{s,t}\Big)\bigg]=0.
\end{split}
\end{equation}
So we have shown \eqref{marta} and hence (\ref{neu3108a}). This finishes the proof of \eqref{eq:1406b} and thus have shown that the equations
  \begin{equation}\label{eq:consistency_rhotilde}
 \left[  \int_{\T} \tilde\varrho^h \varphi \diff \bx \right]_{t=0}^{t=\tau\wedge\tilde{\mathfrak t}} =\int_0^{\tau\wedge\tilde{\mathfrak t}} \int_{\T} \tilde{\bm}^h \cdot \nabla_{\bx} \varphi \diff \bx \diff t
 +\int_0^{\tau\wedge\tilde{\mathfrak t}} \tilde e_{\varrho^h} (t,\varphi) \diff t;
  \end{equation}
  \begin{equation}\label{eq:consistency_mtilde}
\begin{aligned}
 \left[  \int_{\T} \tilde{\bm}^h \boldsymbol{\varphi} \diff \bx \right]_{t=0}^{t=\tau\wedge\tilde{\mathfrak t}} &=\int_0^{\tau\wedge\tilde{\mathfrak t}} \int_{\T} \Big(\frac{\tilde{\bm}^h\otimes\tilde{\bm}^h}{\tilde\varrho^h} :  \nabla_{\bx}\boldsymbol{\varphi}  +\tilde\varrho^h\tilde\vartheta^h \mathrm{div}_{\bx} \boldsymbol{\varphi} \Big)\diff \bx \diff t\\
&+\int_0^{\tau\wedge\tilde{\mathfrak t}}\int_{\T} \tilde\varrho^h\phi\cdot\Pi_{h_m}\boldsymbol\varphi\diff \bx \,\dd \tilde W + \int_0^{\tau\wedge\tilde{\mathfrak t}} \tilde e_{\bm^h} (t,\boldsymbol{\varphi}) \diff t,
\end{aligned}
  \end{equation}
  \begin{equation}\label{eq:consistency_Stilde}
 \left[  \int_{\T} \tilde S^h \psi \diff \bx \right]_{t=0}^{t=\tau\wedge\tilde{\mathfrak t}} \leq \int_0^{\tau\wedge\tilde{\mathfrak t}} \int_{\T} \tilde S^h \frac{\tilde\bm^h}{\tilde\varrho^h} \cdot \nabla_\bx {\psi} \diff \bx \diff t + \int_0^{\tau\wedge\tilde{\mathfrak t}} \tilde e_{S^h} (t,\psi) \diff t;
\end{equation}
hold $\tilde\p$-a.s. for all sufficiently smooth spatial test-functions. For the stochastic integral in we have by It\^{o}-isometry, $\phi\in L_2(\mathfrak U;L^2(\T))$ and \eqref{ass:main}
\begin{align*}
\tilde\E\bigg[\bigg|\int_0^{\tau\wedge\tilde{\mathfrak t}}\int_{\T} \tilde\varrho^h\phi\cdot(\Pi_{h_m}-\mathrm{id})\boldsymbol\varphi\diff \bx \,\dd \tilde W\bigg|^2\bigg]&=\tilde\E\bigg[\int_0^{\tau\wedge\tilde{\mathfrak t}}\bigg(\int_{\T} \tilde\varrho^h\phi\cdot(\Pi_{h_m}-\mathrm{id})\boldsymbol\varphi\diff \bx\bigg)^2\dt \bigg]\\
&\leq\,c(\tau,K,\phi)\|(\Pi_{h_m}-\mathrm{id})\boldsymbol\varphi\|^2_{L^2(\T)},
\end{align*}
which vanishes as $m\rightarrow\infty$.
By Proposition \ref{skorokhod} we can pass to the limit in the remaining terms noticing that the error terms on the right-hand side vanish as $m\rightarrow\infty$. Hence the limit object is a dissipative solution in the sense of Definition \ref{E:dfn} provided we can establish the energy balance.

On the original probability space, the approximate the energy equality holds in the sense that
\begin{eqnarray*}
\mathfrak E_{t\wedge\mathfrak t}^{h_m}= \mathfrak E_{s\wedge\mathfrak t}^{h_m}+\frac{1}{2}\int_{s\wedge\mathfrak t}^{t\wedge\mathfrak t}\|\sqrt{\varrho^{h_m}}\phi\|_{L_2(\UU,L^2(\T))}^{2}\,\dd \sigma+ \int_{s\wedge\mathfrak t}^{t\wedge\mathfrak t}\int_{\T}\vec m^{h_m}\phi \,\dd x\dd W,
\end{eqnarray*}
$\p$-a.s for a.a $0\leq s<t$ (including $s=0$), where

\[
\mathfrak E_{t}^{h_m} =\int_{\T}\left[\frac{1}{2}\frac{|\vec m^{h_m}|^2}{\varrho^{h_m}}+ c_v(\varrho^{h_m})^{\gamma}\exp{\left(\frac{S^{h_m}}{c_v\varrho^{h_m}}\right)} \right]\, \dd x ,
\]
for a.a $t \geq 0$. For any fixed $s$ this is equivalent to

\[
-\int_{s\wedge\mathfrak t}^{T\wedge\mathfrak t}\partial_t \varphi \mathfrak E_{t\wedge\mathfrak t}^{h_m} \, \dd t- \varphi(s)\mathfrak E_{s\wedge\mathfrak t}^{h_m} = \frac{1}{2}\int_{s\wedge\mathfrak t}^{T\wedge\mathfrak t}\varphi \|\sqrt{\varrho^{h_m}}\phi\|_{L_2(\UU,L^2(\T))}^{2}\,\dd t+ \int_{s\wedge\mathfrak t}^{T\wedge\mathfrak t}\varphi\int_{\T}\vec m^{h_m} \cdot \phi \,\dd x\dd W,
\]
$\p$-a.s for all $\varphi\in C_{0}^{\infty}([s,T))$. By virtue of Theorem 2.9.1 in \cite{FrBrHo}, and in view of Proposition \ref{skorokhod} the energy equality continues to hold on the new probability space and reads

\[
\tilde{\mathfrak E}_{t\wedge\tilde{\mathfrak t}}^{h_m}=\tilde{\mathfrak E}_{s\wedge\tilde{\mathfrak t}}^{h_m}+\frac{1}{2}\int_{s\tilde{\wedge\mathfrak t}}^{t\wedge\tilde{\mathfrak t}}\|\sqrt{\tilde{\varrho}^{h_m}}\phi\|_{L_2(\UU,L^2(\T))}^{2}\,\dd \sigma+ \int_{s\wedge\tilde{\mathfrak t}}^{t\wedge\tilde{\mathfrak t}}\int_{\T}\tilde{\vec m}^{h_m}\phi \,\dd x\dd \tilde{W}^{h_m},
\]
$\tilde{\p}$-a.s. for a.a $s$(including $s=0$) and all $t \geq s$. Averaging in $t$ and s, the above expression becomes continuous on the path space.  Furthermore, fixing $s=0$ and Lemma 2.1 in \cite {Debussche}, the bounds established in Proposition \ref{skorokhod}, and higher moments to perform the limit $m\to \infty$ we obtain

\begin{equation}\label{eq:newEner}
    \tilde{\mathfrak{E}}_{t\wedge\tilde{\mathfrak t}}\leq\tilde{\mathfrak{E}}_0+\frac{1}{2}\int_{s\wedge\tilde{\mathfrak t}}^{t\wedge\tilde{\mathfrak t}}\|\sqrt{\tilde{\varrho}}\phi\|_{L_2(\UU;L^2(\T))}^2\, \dd \sigma + \int_{s\wedge\tilde{\mathfrak t}}^{t\wedge\tilde{\mathfrak t}}\int_{\T}\tilde{\vec m} \cdot \phi \, \dd x \dd \tilde{W},
\end{equation}
    $\p$-a.s. for a.a. $t\in [0,T]$, where
    \[
    \tilde{\mathfrak {E}}_t= \int_{\T}\left[\frac{1}{2}\frac{|\tilde{\vec m} |^2}{\tilde{\varrho}}+c_v\tilde{\varrho}^{\gamma}\exp{\left(\frac{\tilde{S}}{c_v\tilde{\varrho}}\right)}\right]\, \dd x + \frac{1}{2}\int_{\T}\dd\, \mathrm{tr}\tilde{\mathcal{R}}_{\text{conv}}(t) +c_v\int_{\T}\dd\, \tilde{\mathcal{R}}_{\text{press}}(t),
    \]
     and 
    \[
   \tilde{\mathfrak E}_0= \int_{\T}\left[\frac{1}{2}\frac{|{\vec m}_0 |^2}{{\varrho}_0}+c_v{\varrho}_0^{\gamma}\exp{\left(\frac{S_0}{c_v\varrho_0}\right)}\right]\, \dd x.
    \]
Performing the limit $\varepsilon_m\to 0$ yields an energy inequality. 
%The entropy balance in the approximate system holds as an equality\footnote{really??}. 
Hence, to convert (\ref{eq:newEner}) to equality, it is sufficient to augment  the term contributing to the internal energy ($\tilde{\mathcal{R}}_{\text{press}}(t)$) by $h(t)\dd x$ with A suitable spatially homogeneous $h\geq 0$. NOte that $\tilde{\mathcal{R}}_{\text{press}}(t)$ acts on $\mathrm{div}_x\varphi$ such that
$\int_{\T}h(t)\mathrm{div}_x \varphi\,\dd x=0$
and hence
 \[
-\int_{s\wedge\tilde{\mathfrak t}}^{T\wedge\tilde{\mathfrak t}}\partial_t \varphi \tilde{\mathfrak E}_t \, \dd t- \varphi(s)\tilde{\mathfrak E}_{s\wedge\tilde{\mathfrak t}} = \frac{1}{2}\int_{s\wedge\tilde{\mathfrak t}}^{T\wedge\tilde{\mathfrak t}}\varphi \|\sqrt{\tilde\varrho}\phi\|_{L_2(\UU,L^2(\T))}^{2}\,\dd t+ \int_{s\wedge\tilde{\mathfrak t}}^{T\wedge\tilde{\mathfrak t}}\varphi\int_{\T}\tilde{\vec m}\cdot \phi \,\dd x\dd \tilde W,
\]
$\p$-a.s for all $\varphi\in C_{0}^{\infty}([s,T))$. This completes the proof of Theorem \ref{thm:main1}. 

\section{Convergence}\label{se_Convergence}
In this section we prove Theorem \ref{thm:main2}. The main tool is the relative entropy, which we derive in the next subsection.

\subsection{The relative entropy}
We consider the \textit{relative entropy} for the discrete solution $(\varrho^h,\vec m^h, \mathcal E^h)$ measuring its distance to a smooth comparison function $(r,\Theta,\vec v)$ (which will be chosen in the next subsection as the strong pathwise solution to \eqref{Euler}). It is given by
\begin{align}\label{relative}
\begin{aligned}
\mathcal{K}\big(\varrho^h,E^h,\vec m^h\big|r,\Theta,\vec v\big) &= \frac{1}{2}\int_{\T}\varrho^h\left|\frac{\vec m^h}{\varrho^h} -\vec v\right|^2\, \dd x-\int_{\T}\Theta\varrho^h {s}(\varrho^h,\mathcal E^h)\, \dd x-\int_{\T}\varrho^h\partial_{\varrho}H_{\Theta}(r,\Theta)\,\dd x\\&+\int_{\T}\big(\partial_{\varrho}H_{\Theta}(r,\Theta)(r)-H_{\Theta}(r,\Theta)\big)\, \dd x
\end{aligned}
\end{align}
for $t\in[0,\mathfrak t]$. Here we used
the \textit{ballistic free energy}
\begin{equation}\label{ballistic}
 H_{{\Theta}}(\varrho,\vartheta)=\varrho e(\varrho,\vartheta)-\Theta \varrho s (\varrho,\vartheta),
\end{equation}
recalling the formulas for $e$ and $s$ from Section \ref{sec:cr}.
We obtain the following result.
\begin{prop}[Relative Entropy Inequality]\label{propA}
Let $(\Omega, \FF, (\FF_t)_{t\geq 0},\p)$ be a complete stochastic basis with a probability measure $\p$ on $(\Omega,\FF)$ and right-continuous filtration $(\FF_t)_{t\geq 0}$ and $W$ an $(\FF_t)$-adapted Wiener process. Let  $(\varrho^h,\vec m^h,\mathcal E^h)$ be the solution to \eqref{eq:DG_standard_2}. Suppose there is an $(\mathfrak F_t)$-stopping time $\mathfrak t$ such that \eqref{ass:main} holds. Let $(r,\Theta, \vec v)$ be a trio of $(\FF_t)_{t\geq 0}$ -adapted stochastic processes defined on $(\Omega, \FF, (\FF_t)_{t\geq 0},\p)$ such that for some $T>0$
\begin{align*}
 r,\Theta,\vec v \in C([0,T]; C^{1} (\T)) \quad\p \text{-a.s.,}  \,\,\,
 \E\left[\sup_{t\in [0,T]}\|(r,\Theta,\vec v)\|_{W^{1,\infty}(\T)}\right]^q < \infty \text{ for all }\, 2\leq q < \infty,
\end{align*}
\[
0<\underline{r} \leq r(t,x) \leq \overline{r}, \quad
0<\underline{\Theta} \leq \Theta(t,x) \leq \overline{\Theta} \quad \p\text{-a.s.}
\]
Suppose further that it holds
\begin{align}\label{eq:sass}
\begin{aligned}
    \dd r &= D_t^dr\,\dd t,\\
    \dd \vec v &=D_t^d \vec v \,\dd t+ \mathbb{D}_t^s \vec v\, \dd W,\\
    \dd \Theta &= D_t^d \Theta\,\dd t,
\end{aligned}
\end{align}
where
\[
D^dr,D^d \Theta,D^d\vec v \in L^{q}(\Omega; C([0,T];C^{1}(\T)))\qquad  \mathbb{D}^s\vec v \in L^2(\Omega; L^2(0,T;L_2(\UU;L^2(\T))),
\]
\begin{equation}\label{eq:prop}
 \left(\sum_{k\geq 1}|\mathbb{D}^s\vec v(e_k)|^q\right)^{\frac{1}{q}}   \in L^{q}(\Omega; L^q(0,T;L^q(\T))).
\end{equation}
Then the relative entropy inequality:
\begin{align}\label{REI}
\begin{aligned}
   \mathcal{K}\bigg(\varrho^h,\mathcal E^h,\vec m^h\bigg|r,\Theta,\vec v\bigg)&\leq\,c \,\mathcal{K}\bigg(\varrho^h,\mathcal E^h,\vec m^h\bigg|r,\Theta,\vec v\bigg)(0)+c\int_{0}^{\tau}\mathcal{Q}\bigg(\varrho^h,\vartheta^h,\vec m^h\bigg|r,\Theta,\vec v\bigg)\, \dd t+\mathbb{M}\\
&+c\,\mathbf e^h(\partial_{\varrho}H_{\Theta}(r,\Theta),\vec v,\Theta)
\end{aligned}
\end{align}
holds $\p$-a.s for all $\tau \in (0,T\wedge\mathfrak t)$, where
\begin{align*}
\mathcal{Q}\bigg(\varrho^h,\vartheta^h,\vec m^h\bigg|r,\Theta,\vec v\bigg)=
&\int_{\T}\varrho\left(\frac{\vec m^h}{\varrho^h}-\vec v\right)\cdot\nabla_x\vec v\cdot \left(\vec v-\frac{\vec m^h}{\varrho^h}\right)\,\dd x\nonumber\\
&+\int_{\T}[(D_t^d \vec v + \vec v\cdot\nabla_x \vec v )\cdot(\varrho\vec v- \vec m^h) -p(\varrho^h,\vartheta^h)\mathrm{div}_x\vec v] \, \dd x  \\
&-\int_{0}^{\tau}\int_{\T}[\varrho^h s(\varrho^h,\vartheta^h) D_t^d \Theta + s(\varrho^h,\vartheta^h)\vec m^h\cdot \nabla_x \Theta ] \, \dd x \dd t\nonumber\\
&+\int_{0}^{T}\int_{\T}[\varrho^h s (r,\Theta) \partial_{t}\Theta+ \vec m^h s(r,\Theta)\cdot\nabla_x\Theta]\, \dd x\dd t \nonumber\\
&+\int_{\T} \left(\left(1-\frac{\varrho^h}{r}\right)\partial_{t}p(r,\Theta)-\frac{\vec m^h}{r}\cdot\nabla_x p(r,\Theta)\right)\,\dd x\nonumber,\\
&-\sum_{k\geq 1}\int_{\T}\mathbb{D}_t^s\vec v(e_k)\cdot \varrho^h\phi(e_k)\, \dd x \\
&+\frac{1}{2}\|\sqrt{{\varrho^h}}\phi\|_{L_2(\UU,L^2(\T))}^{2} +\frac{1}{2}\sum_{k\geq 1}\int_{\T}\varrho^h|\mathbb{D}_t^s\vec v(e_k)|^2\, \dd x, 
\end{align*}
and
\begin{align*}
\mathbb{M}&= \int_{0}^{\tau}\int_{\T}{\vec m^h}\phi \,\dd x\dd {W} \\
& -\int_{0}^{t}\int_{\T}\bigg[\vec m^h\mathbb{D}_t^s\vec v+ \Pi_h\vec v\varrho^h \phi\bigg]\,\dd x \dd W +\int_{0}^{t}\int_{\T}\varrho^h\vec v\cdot \mathbb{D}_t^s\vec v\, \dd x\dd W.
\end{align*}
\end{prop}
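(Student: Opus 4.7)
The plan is to compute the Itô differential of each summand in $\mathcal K$ separately on the time interval $[0,\tau\wedge\mathfrak t]$ and then add them. Throughout I would treat the DG solution as a finite-dimensional semimartingale at the nodal level (so that Itô's formula applies pointwise in the DOFs), while exploiting the consistency formulations \eqref{eq:consistency_rho}--\eqref{eq:consistency_S} whenever a spatial test function is needed; the bounds \eqref{ass:main} guarantee that all products of solution components that appear are integrable on this time interval. The stochastic evolution \eqref{eq:sass} of the comparison triple $(r,\Theta,\vec v)$ enters via Itô's product rule.

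The main calculation splits into three blocks. For the kinetic block $\tfrac12\int_{\T}\varrho^h|\vec m^h/\varrho^h-\vec v|^2\dx$, expand the square into $\tfrac12\int|\vec m^h|^2/\varrho^h\dx-\int\vec m^h\cdot\vec v\dx+\tfrac12\int\varrho^h|\vec v|^2\dx$. The first piece is handled by Itô's formula applied to $|\vec m^h|^2/\varrho^h$ using \eqref{eq:consistency_rho}--\eqref{eq:consistency_m}, producing the martingale $\int\vec m^h\cdot\phi\,\dd W$ and the Itô correction $\tfrac12\|\sqrt{\varrho^h}\phi\|_{L_2}^2$. The cross piece $-\int\vec m^h\cdot\vec v\dx$ follows from the product rule between the two semimartingales $\vec m^h$ and $\vec v$, generating the cross-variation $-\sum_k\int\mathbb{D}^s_t\vec v(e_k)\cdot\varrho^h\phi(e_k)\dx$ together with two martingale terms (the projection $\Pi_h$ appears because $\vec v$ must be projected into $\mathcal V^h$ in order to use \eqref{eq:consistency_m}). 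The last piece is the product of $\varrho^h$, driven by \eqref{eq:consistency_rho}, and $|\vec v|^2$, treated by Itô's formula in $\vec v$, producing the correction $\tfrac12\sum_k\int\varrho^h|\mathbb{D}^s_t\vec v(e_k)|^2\dx$ and one more martingale. For the entropy block $-\int\Theta S^h\dx$, the product rule between the BV-in-time process $S^h$ (evolved by the entropy \emph{inequality} \eqref{eq:consistency_S}) and the smooth process $\Theta$ yields the entropy-flux lines of $\mathcal Q$; the inequality sign propagates in the favourable direction. For the potential block involving $H_\Theta(r,\Theta)$, the chain rule combined with \eqref{eq:sass} and the Gibbs relation \eqref{gibbs} converts derivatives of $H_\Theta$ into pressure and entropy and furnishes the pressure-work contributions $\int(1-\varrho^h/r)\partial_t p(r,\Theta)\dx-\int(\vec m^h/r)\cdot\nabla p(r,\Theta)\dx$ as well as the reference-entropy terms with $s(r,\Theta)$.

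Finally, assemble. After rearranging the convective fluxes and completing the square to obtain $\int\varrho^h(\vec m^h/\varrho^h-\vec v)\cdot\nabla\vec v\cdot(\vec v-\vec m^h/\varrho^h)\dx$, the deterministic residuals collect into $\mathcal Q$ in the advertised form; the four noise contributions collect into $\mathbb M$; the Itô corrections are exactly the last two lines of $\mathcal Q$; the non-polynomial test functions $\partial_\varrho H_\Theta(r,\Theta),\vec v,\Theta$ needed in \eqref{eq:consistency_rho}--\eqref{eq:consistency_S} are inserted directly and their polynomial defect is absorbed into $\mathbf e^h(\partial_\varrho H_\Theta(r,\Theta),\vec v,\Theta)$ via \eqref{eq:consistency_error}. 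The main obstacle is the careful bookkeeping of all Itô corrections so that the decomposition is \emph{exactly} $\mathbb M$ plus the two correction lines of $\mathcal Q$ with no spurious residual. A secondary difficulty is the treatment of the cross stochastic integral $-\int\Pi_h\vec v\cdot\varrho^h\phi\,\dd W$, in which the projection $\Pi_h$ reflects the discrete nature of the momentum equation; the deviation $(\mathrm{id}-\Pi_h)\vec v$ is controlled via the Itô isometry and \eqref{eq:HS}, exactly as in the passage-to-the-limit argument of Section~\ref{mvsproof}.
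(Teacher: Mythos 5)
Your overall architecture --- expanding $\mathcal K$ into its summands, applying It\^{o}'s product rule to $\int_{\T}\vec m^h\cdot\vec v\,\dd x$ and to $\tfrac12\int_{\T}\varrho^h|\vec v|^2\,\dd x$, testing the entropy inequality \eqref{eq:consistency_S} with $\Theta$ and the continuity identity \eqref{eq:consistency_rho} with $\partial_\varrho H_{\Theta}(r,\Theta)$, and absorbing the resulting defects into $\mathbf e^h(\partial_\varrho H_\Theta(r,\Theta),\vec v,\Theta)$ --- coincides with the paper's proof for all but one block, and for those blocks your identification of the cross-variation, the It\^{o} correction $\tfrac12\sum_k\int_{\T}\varrho^h|\mathbb D^s_t\vec v(e_k)|^2\dx$ and the three martingales in $\mathbb M$ is correct.

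The genuine gap is your treatment of the discrete kinetic energy $\tfrac12\int_{\T}|\vec m^h|^2/\varrho^h\,\dd x$. You propose to obtain its evolution by It\^{o}'s formula ``using \eqref{eq:consistency_rho}--\eqref{eq:consistency_m}'', claiming it produces only the martingale $\int\vec m^h\cdot\phi\,\dd W$ and the correction $\tfrac12\|\sqrt{\varrho^h}\phi\|^2_{L_2}$. This does not close: the consistency identities require test functions in $C^{\mathfrak p+1}(\overline{\T})$, and $\vec m^h/\varrho^h$ is a discontinuous piecewise rational function, hence inadmissible; if you instead apply It\^{o}'s formula at the nodal level to the SDE \eqref{eq:DG_standard_2} (as you suggest), the drift contributes, besides the pressure work, the kinetic-energy production of the flux-differencing volume terms and of the Lax--Friedrichs interface penalisation. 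These deterministic residuals have no sign, are not $o(1)$ termwise, and appear in neither $\mathcal Q$ nor $\mathbb M$; only the \emph{total} energy (kinetic plus internal) is exactly controlled by the scheme. The paper therefore never differentiates the kinetic energy alone: it adds the exact discrete total-energy balance \eqref{eq:consistency_E} to the sum of \eqref{eq:PA}--\eqref{eq:PF}, and it is \eqref{eq:consistency_E} that supplies the martingale $\int_0^\tau\int_{\T}\vec m^h\phi\,\dd x\,\dd W$ and the correction $\tfrac12\|\sqrt{\varrho^h}\phi\|^2_{L_2}$ with no spurious drift, precisely because the DGSEM scheme \eqref{eq:DG_standard_2} is constructed to be energy/entropy consistent. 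You need to replace your direct It\^{o} computation on $|\vec m^h|^2/\varrho^h$ by an appeal to \eqref{eq:consistency_E}; without it the ``no spurious residual'' bookkeeping you flag as the main obstacle genuinely fails.
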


\begin{proof}
By It\^{o}'s formula we obtain from \eqref{eq:consistency_m} and \eqref{eq:sass}
\begin{align}\label{eq:PA}
\begin{aligned}
\dd \left( \int_{\T}\vec m^h\cdot\vec v \,\dd x \right)&=\left(\int_{\T}\bigg[\vec m^h\cdot D_t^d\vec v+ \left(\frac{\vec m^h\otimes\vec m^h}{\varrho^h}\right)\cdot\nabla \vec v+ p(\varrho^h, s^h)\mathrm{div}\vec v\bigg]\,\dd x \right)\, \dd t\\
&+\sum_{k\geq 1}\int_{\T}\mathbb{D}_t^s\vec v(e_k)\cdot \varrho^h\phi(e_k)\, \dd x \dd t\\
&+e_{\vec m^h}(t,\vec v)\dt+\dd M_1,\end{aligned}
\end{align}
where 
\[
M_1 =\int_{0}^{t}\int_{\T}\bigg[\vec m\mathbb{D}_t^s\vec v+ \Pi_h\vec v\varrho \phi\bigg]\,\dd x \dd W.
\]

Similarly to (\ref{eq:PA}), we compute
\begin{align}\label{eq:PB}
\begin{aligned}
\dd\left(\int_{\T}\frac{1}{2}\varrho^h |\vec v|^2\,\dd x\right)&=\int_{\T}\varrho^h\vec v \cdot \nabla \vec v\cdot \vec v\, \dd x \dd t+ \int_{\T}\varrho^h\vec v\cdot D_t^d\vec v\, \dd x \dd t\\
&+\frac{1}{2}\sum_{k\geq 1}\varrho^h|\mathbb{D}_t^s\vec v(e_k)|^2\, \dd x \dd t +\dd M_2, \end{aligned}
\end{align}
where 
\[
M_2 = \int_{0}^{t}\int_{\T}\varrho^h\vec v\cdot \mathbb{D}_t^s\vec v\, \dd x\dd W.
\]

Testing the entropy balance \eqref{eq:consistency_S} with $\Theta$ we deduce
\begin{equation}\label{eq:PD}
\dd \left(\int_{\T} \varrho^h s(\varrho^h,\vartheta^h) \Theta \dd x\right) 
\geq\int_{\T} \vec m^h s(\varrho^h,\vartheta^h) \cdot \nabla_x \Theta\, \dd x\dd t+\int_{\T}\varrho^h s(\varrho^h,\vartheta^h) \partial_t \Theta\, \dd x\dd t+e_{S^h}(t,\Theta)\dt.
\end{equation}
Similarly, testing the continuity equation \eqref{eq:consistency_rho} with $\partial_{\varrho}H_{\Theta}(r,\Theta)$ yields
\begin{equation}\label{eq:PF}
 \dd\left(\int_{\T}\varrho \partial_{\varrho}H_{\Theta}(r,\Theta)\, \dd x \right) =\int_{\T} \vec m\cdot \nabla_x(\partial_{\varrho}H_{\Theta}(r,\Theta)) \,\dd x+ \int_{\T}\varrho^h \partial_t(\partial_{\varrho}H_{\Theta}(r,\Theta))\, \dd x+e_{\varrho^h}(t,\partial_{\varrho}H_{\Theta}(r,\Theta))\dt.
\end{equation}
Finally, we collect and sum the resulting expressions (\ref{eq:PA})--(\ref{eq:PF}), and add the energy balance from \eqref{eq:consistency_S} to the sum.
\end{proof}

\subsection{Error estimates}
\begin{proof}[Proof of Theorem \ref{thm:main2}]
We choose now $(r,\vec v,\Theta)$ in Proposition \ref{propA} to be the local strong solution from Definition and $t\in[0,\mathfrak t\wedge \mathfrak s_M]$. The terms contained in $\mathcal Q$ corresponding exactly to those in \cite{Mo}\footnote{In fact, in \cite{Mo} also the defect measures appear which are not present for the discrete solution.} and we obtain as there
\begin{align*}
\mathcal{Q}\bigg(\varrho^h,\vartheta^h,\vec m^h\bigg|r,\Theta,\vec v\bigg)(\tau\wedge \mathfrak s_M\wedge \mathfrak t)\leq\,c  \int_0^{\tau\wedge \mathfrak s_M\wedge \mathfrak t} \mathcal{K}\bigg(\varrho^h,\mathcal E^h,\vec m^h\bigg|r,\Theta,\vec v\bigg)\,\diff t
\end{align*}
$\p$-a.s. with a constant depending on $M$.
The right-hand side can be handled by Gronwall's lemma. The stochastic terms in \eqref{REI} vanish after taking expectations. Finally, due to the regularity of $(r,\vec v,\Theta)$ we can use estimate \eqref{eq:consistency_errorB} and \eqref{eq:consistency_errorA} (depending on whether $\mathfrak p\geq 1$ or $\mathfrak p=0$) to control the error term by $h$. By \cite[Lemma 2.7]{LuSh} we have for some $c>0$ independent of $h$
\begin{align*}
&\E\Big[\mathcal{K}\bigg(\varrho^h,\mathcal E^h,\vec m^h\bigg|r,\Theta,\vec v\bigg)(\tau\wedge \mathfrak s_M\wedge \mathfrak t)\Big]\\&\qquad\qquad\leq\,c\,\E \Big[\Big(\|\varrho^h-r\|^2_{L^2(\T)}+\|\vec u^h-\vec v\|^2_{L^2(\T)}+\|\vartheta^h-\Theta\|^2_{L^2(\T)}\Big)(t\wedge\mathfrak t\wedge\mathfrak s_M)\Big],
\end{align*}
where the constant depends on $K$ from \eqref{ass:main}. The proof is complete.
\end{proof}

\section{Numerical simulation}\label{sec_numerical}
To support our theoretical analysis, we derive some simulations solving the stochastic Euler system \eqref{EulerC}. The numerical experiments are conducted using the Julia programming language \cite{bezanson2017julia}.  
For stochastic time integration we employ if nothing else is said the Euler–Maruyama method from the DifferentialEquations.jl package \cite{rackauckas2017differentialequations}. 
Spatial discretization is handled using the Trixi.jl framework \cite{ranocha2022adaptive,schlottkelakemper2021purely}.
Visualization of the results is achieved through Plots.jl \cite{christ2023plots}, and Random.jl package of Julia is used for generating the noise environment.  
%All source code required to reproduce the numerical experiments is available online in the reproducibility repository \cite{???}.
As described before, up-to-the-authors knowledge this is the first time to present  simulations for this type of problems where discontinuities may rise and we lose regularity inside the path even if we start with smooth initial data. %and the  solution of the purely deterministic case remains regular, i.e., it is a strong or classical solution.
In our numerical testing, we focus therefore either on short time periods and/or weak noise strength meaning that if we apply \eqref{eq:14.02}, we multiply with constants which we choose appropriately to our testing. These numerical simulations should only give a proof of concept where further investigations  in this direction together with a more efficient implementation strategy is part of future research. Meanwhile we also refer to \cite{breit2023mean} for a numerical investigation in context of the stochastic Navier-Stokes equations.   \\
We are investigating two errors in detail. We are investigating the expected value of the classical discrete $L^2$ norm in terms of density and the expected value \eqref{eq:errorA}  which represents the estimation of relative energy. The error behaviour in other quantities, such as momentum and energy, follows a similar pattern to that of the density in our observations and is therefore omitted from further analysis. We analyse therefore
\begin{align}
\mathtt{E}_1&=\E \left[ \|\varrho^h-\varrho\|_{L^2(\T)} \right]; %\qquad \E \left[\|\vec u^h-\vec v\|_{L^2(\T)} \right]  \qquad \E \left[ \|  p^h - p \|_{L^2(\T)} \right] 
  \label{L2_error}\\
\mathtt{E}_2&=\E\Big[\Big(\|\varrho^h-r\|^2_{L^2(\T)}+\|\vec u^h-\vec v\|^2_{L^2(\T)}+\|\vartheta^h-\Theta\|^2_{L^2(\T)}\Big) \Big]. \label{relative_energy}
\end{align}
 at the end time. 
Since an analytical solution is unavailable, we compute a reference solution on the finest grid using the same set-up and project the numerical solution $ \varrho$ onto this finer grid to evaluate the errors. For the spatial discretisation, we consistently use the local Lax-Friedrichs (LLF) or Rusanov flux for the surface flux, while for the volume flux, we apply the entropy-conservative and kinetic-energy-preserving flux \eqref{eq:Ranocha_flux} from \cite{ranocha2018}. With polynomial degree zero, this approach aligns with the classical cell-centered finite volume (FV) framework, where the FV method employs the LLF flux.
For the DG case, we restrict ourself in the following mainly to the polynomial degree one for simplicity and efficiency. 
As stated before, this part is only a proof of concept, a more and detailed  investigation will be left for future work together with additional tasks like the construction of limiting strategies for the stochastic time-integration method and  a more efficient implementation. In the simulations, we set the number of samples to 1000 and consider one- and two-dimensional cases.

\subsection{One dimension}\label{sec_one_d}

We consider  the one-dimensional stochastic compressible Euler equations $\gamma = 1.4$
\begin{equation}\label{eq:euler}
  \dd \begin{pmatrix} \varrho \\  m \\ \varrho e \end{pmatrix}
  + \dd_x \begin{pmatrix}  m \\  m^2/\varrho + p \\ (\varrho e + p) v \end{pmatrix} \dd t
  = \frac{1}{2}\mu^2 \begin{pmatrix} 0 \\ 0 \\ \varrho  \end{pmatrix}  \dd t +
   \mu \begin{pmatrix} 0 \\ \varrho  \\  m  \end{pmatrix}  \dd \beta
\end{equation}
where $\mu$ denotes  a stochastic forcing factor and $\beta$ is a standard one-dimensional Wiener process. If nothing else is said, we select $\mu=1$ in the one dimensional tests.

\subsubsection{Grid Convergence}\label{sub_sub_section_1}
We consider a simple density wave 
\begin{equation}
  \varrho(0, x) = 1 + 0.5 \sin\bigl(2\cdot x \bigr), \quad v(0, x) = 0.1, \quad p(0,x)= 10,
\end{equation}
für $t \in [0, 0.5]$ and $x \in [-1, 1]$. In the deterministic case, we obtain a simple density wave (advection behaviour). The reference solution is calculated using $2^{12}=4096$ elements and the time-step is set to $\Delta t=10^{-5}$ where $T=0.5$.\\
In Table \ref{FV_DG_density}, the errors are plotted.  We observe error behaviour for the FV methods in density around 0.8 and the estimation in relative entropy is around 1.23 which is a little bit better than from our theoretical result but beyond the accuracy estimation for classical FV methods which is one. \\
In the case of DGSEM with order $\mathfrak{p=1}$ we obtain second order for the density error (and all other quantities) and fourth order in terms of the estimation of relative energy. 
These results are better than the one obtained through our analysis which relies on the consistency investigation and does take the regularity of the solution into account. 
We stress that the results presented lies inside the behaviour which one expects for DGSEM methods applied to the deterministic Euler equations as demonstrated already in several works, for instance \cite{gassner2016,zbMATH07517718,chen2017}.

%%% New values 

%julia> error_EOC(error_DG_1_linear[1], 4)
%(1.9950376158040481, 2.000775445293089, 2.0022570299616262)
%
%julia> error_EOC(error_DG_1_linear[2], 4)
%(3.978420331013798, 4.004327575509483, 4.0241791530413265)
%
%julia> error_average(error_DG_1_linear[2], 4)
%4-element Vector{Float64}:
% 0.004146157314527266
% 0.0002630400759056534
% 1.6390764425790287e-5
% 1.0073968197515955e-6
%
%julia> error_average(error_DG_1_linear[1], 4)
%4-element Vector{Float64}:
% 0.0028679563955659247
% 0.0007194595458060185
% 0.00017976823546255363
% 4.4871804053468356e-5

%julia> error_EOC(error_DG_0_linear[1], 4)
%(0.62537952287725, 0.8236303042795925, 0.9751430053230031)
%
%julia> error_EOC(error_DG_0_linear[2], 4)
%(0.870818812099528, 1.23310037860864, 1.6079530556739134)
%
%julia> error_average(error_DG_0_linear[2], 4)
%4-element Vector{Float64}:
% 13.661712516641389
%  7.4707217116007145
%  3.178061940635412
%  1.0426060867217548
%
%julia> error_average(error_DG_0_linear[1], 4)
%4-element Vector{Float64}:
% 0.24928806124595648
% 0.1616007920241995
% 0.09130741718265113
% 0.04644711712086064

{\small{
\begin{table}[h!]
\centering
\begin{tabular}{|c|c|c|c|c|c|c|c|c|}
\hline
& \multicolumn{4}{c|}{FV} & \multicolumn{4}{c|}{DGSEM ($\mathfrak{p}=1$)} \\
\cline{2-9}
Elements & $\mathtt{E}_1$ & EOC($\mathtt{E}_1$) & $\mathtt{E}_2$ & EOC($\mathtt{E}_2$) & $\mathtt{E}_1$ & EOC($\mathtt{E}_1$) & $\mathtt{E}_2$ & EOC($\mathtt{E}_2$) \\
\hline
64 & 0.2492 & - & 13.6617 & -  &   0.0029& - & 0.0041 & -\\ 
128 & 0.1616 & 0.63&   7.4707 & 0.87&  0.0007 & 2.00 &  0.0003&3.98 \\
256 & 0.0913& 0.82 & 3.1981 & 1.23  & 0.0002 & 2.00 & $1.63\cdot 10^{-5}$ & 4.00  \\
512 & 0.0464  &  0.98 & 1.0426 & 1.60 & $4.48\cdot 10^{-5}$ & 2.00 & $1.01\cdot 10^{-6}$ &  4.02 \\ \hline
average EOC& - &  0.81 & - & 1.23 & -& 2.0 & -& 4.00 \\ \hline
\end{tabular}
\caption{LLF FV and  entropy stable DGSEM ($\mathfrak{p}=1$) methods - error plot using 1000 samples}
\label{FV_DG_density}
\end{table}
}}

%\begin{table}[h!]
%\centering
%\begin{tabular}{|c|c|c|c|c|}
%\hline
%Elements & $\mathtt{E}_1$ & EOC($\mathtt{E}_1)$ &$\mathtt{E}_2$ &EOC($\mathtt{E}_2)$ \\ \hline \hline
%64 & Row 1, Col 2 & - & Row 1, Col 4 & - \\ 
%128 & Row 2, Col 2 & Row 2, Col 3 & Row 2, Col 4 & Row 2, Col 5 \\
%256 & Row 3, Col 2 & Row 3, Col 3 & Row 3, Col 4 & Row 3, Col 5 \\
%512 & Row 4, Col 2 & Row 4, Col 3 & Row 4, Col 4 & Row 4, Col 5 \\ \hline
%av. order & - & Row 6, Col 3 & -& Row 6, Col 5 \\ \hline
%\end{tabular}
%\caption{entropy stable DGSEM (\mathfrak{p}=1) - error plot using 1000 samples}
%\label{DGSEM_density}
%\end{table}

\subsubsection{One-dimensional Riemann problem}
Next, we examine one-dimensional Riemann problems in the domain \( [0,1] \). It is important to note that our Theorem \ref{thm:main2} does not apply to these test cases, as the initial conditions do not satisfy the requirement of belonging to \( W^{\mathfrak{p}+3,2} \); in fact, they are no longer even Lipschitz continuous. These types of problems are included for comparison with the results from \cite{LuSh}, where the Godunov method and a particular FV method were numerically studied in the deterministic case. The reference solutions are again calculated using \(4096\) elements. The initial data are as follows:
\begin{itemize}
  \item Rarefaction wave: $  (\varrho, v, p) (0, x) = \begin{cases}
  (0.5197, - 0.7259,0.4), \qquad x <0.5,\\
  (1, 0, 1),\qquad x>0.5.
  \end{cases}$
\item Contact discontinuity: $  (\varrho, v, p) (0, x) = \begin{cases}
  (0.5,0.5,5), \qquad x <0.5,\\
  (1, 5,0, 5),\qquad x>0.5.
  \end{cases}$
    \item Shock wave: $  (\varrho, v, p) (0, x) = \begin{cases}
  (1,0.7276,1), \qquad x <0.5,\\
  (0.5313, 0, 0.4),\qquad x>0.5.
  \end{cases}$
\end{itemize}
We use  $\Delta t=10^{-5}$  and $T=0.2$ in all three test cases. The boundary conditions are outflow conditions. 
In Tables \ref{FV_DG_rarefaction} and \ref{FV_Contact_Shock}, 
the errors and experimental orders of convergence are given for different settings. 
To summarize the results: 

\begin{itemize}
\item Rarefaction wave: For $\mathfrak p=0$ we recognise that convergence rates in terms of density are slightly larger than $0.5$ (in average 0.62) whereas the 
estimation of relative energy is around 1.28 (larger than 1). This is in agreement with the deterministic case from \cite{LuSh} where for Godunov method similar results have been presented and verified the theoretical result\footnote{In \cite{LuSh}, the assumption on the initial data was less restrictive and only an estimation in terms of projection from the initial data and the initial data itself was used.}. In the DGSEM setting with $\mathfrak{p}=1$, we recognise that due to the lack of regularity and on account of the Gibbs phenomena (projection at the beginning of the simulation), the convergence error rates are lower in both quantities. Here, additional techniques have to be used like the application of modal filters, for instance similar to the works \cite{offner2015zweidimensionale,zbMATH00036793}, to obtain better error rates. In the following, we will concentrate on the FV setting. 
\item For the single contact wave the convergence rate for the density is in average around $0.43$ which is slightly better than $1/4$. In terms of the estimation of the relative energy we have $0.84$ where the most accurate results are obtained on the finest grid.
\item For the single shock wave the convergence rate of $\mathtt{E}_1$ is around $0.52$. The error in $\mathtt{E}_2$ is about $1.02$. This outcome is consistent with the deterministic case discussed in \cite{LuSh}.
\end{itemize}

{\small{
\begin{table}[h!]
\centering
\begin{tabular}{|c|c|c|c|c|c|c|c|c|}
\hline
& \multicolumn{4}{c|}{FV} & \multicolumn{4}{c|}{DGSEM ($\mathfrak{p}=1$)} \\
\cline{2-9}
Elements & $\mathtt{E}_1$ & EOC($\mathtt{E}_1$) & $\mathtt{E}_2$ & EOC($\mathtt{E}_2$) & $\mathtt{E}_1$ & EOC($\mathtt{E}_1$) & $\mathtt{E}_2$ & EOC($\mathtt{E}_2$) \\
\hline
64 & 0.0321 & - & 0.0046 & -               &  0.0141 & -         &  0.00111  & -\\ 
128 & 0.0224 & 0.51 & 0.0021 & 1.09 &   0.0098  & 0.52  & 0.00077  & 0.52\\
256 & 0.0148 & 0.61 & 0.0009 & 1.26  & 0.0074& 0.41     & 0.00066 & 0.23  \\
512 & 0.0089 & 0.72 & 0.0003 & 1.47  & 0.0061 & 0.27    &  0.00061 & 0.08 \\ \hline
average EOC& - & 0.61 & - & 1.27        & -          &    0.4 & -          & 0.28 \\ \hline
\end{tabular}
\caption{LLF-FV and  entr. dissipative DGSEM methods for the rarefaction wave - 1000 samples}
\label{FV_DG_rarefaction}
\end{table}
}}

%error_EOC(error_FV_rar[1], 4)
%(0.5143167563663498, 0.6054642907988579, 0.7224424141196262)
%
%julia> error_EOC(error_FV_rar[2], 4)
%(1.0942664676211744, 1.2550410141124668, 1.4741481942275902)
%
%julia> error_average(error_FV_rar[2], 4)
%4-element Vector{Float64}:
% 0.0046102363968627425
% 0.0021593158124567618
% 0.0009047137149562058
% 0.0003256479507043618
%
%julia> error_average(error_FV_rar[1], 4)
%4-element Vector{Float64}:
% 0.032122533732051706
% 0.022489770461941533
% 0.014781622609003531
% 0.008958705769010485

%error_average(error_DG_rar[1],4)
%4-element Vector{Float64}:
% 0.014079919095668785
% 0.009804893060877505
% 0.00739498222242074
% 0.006114319501646356
%
%julia> error_average(error_DG_rar[2],4)
%4-element Vector{Float64}:
% 0.0011071324753598232
% 0.0007705047161838554
% 0.0006551825924410659
% 0.0006175027257565068
%
%julia> error_EOC(error_DG_rar[2],4)
%(0.5229521675036416, 0.23390676086628556, 0.08545152149606738)
%
%julia> error_EOC(error_DG_rar[1],4)
%(0.5220652435991241, 0.40695521636576026, 0.2743547373574856)

%julia> error_average(error_FV_con[1], 4)
%4-element Vector{Float64}:
% 0.06729697279671357
% 0.05254105779083934
% 0.039512616552426515
% 0.02788623206409043
%
%julia> error_average(error_FV_con[2], 4)
%4-element Vector{Float64}:
% 0.5912653682456979
% 0.36197944190803527
% 0.2058261797759504
% 0.1031629231524883
%
%julia> error_EOC(error_FV_con[1], 4)
%(0.3570963638552652, 0.4111318603124428, 0.5027603724700249)
%
%julia> error_EOC(error_FV_con[2], 4)
%(0.7078980137705659, 0.8144812683831938, 0.9965019374838676)
%

{\small{
\begin{table}[h!]
\centering
\begin{tabular}{|c|c|c|c|c|c|c|c|c|}
\hline
 & \multicolumn{4}{c|}{Contact} & \multicolumn{4}{c|}{Shock} \\
\cline{2-9}
Elements & $\mathtt{E}_1$ & EOC($\mathtt{E}_1$) & $\mathtt{E}_2$ & EOC($\mathtt{E}_2$) & $\mathtt{E}_1$ & EOC($\mathtt{E}_1$) & $\mathtt{E}_2$ & EOC($\mathtt{E}_2$) \\\hline
64 & 0.0673 & - & 0.5913 & -  &  0.049 & - &  0.010 & -\\ 
128 & 0.0525 & 0.36& 0.3620 & 0.71 & 0.037 & 0.45 & 0.005 & 0.86 \\
256 & 0.0395 & 0.41 & 0.2058 &0.81  & 0.025 & 0.51 & 0.003 & 1.01  \\
512 & 0.0279 & 0.50 & 0.1031 & 1.00 & 0.017 & 0.60 & 0.001 & 1.19\\ \hline
average EOC& - & 0.42 &  & 0.84& -& 0.52 & -            & 1.02\\ \hline
\end{tabular}
\caption{LLF-FV method - 1000 samples}
\label{FV_Contact_Shock}
\end{table}
}}

\subsubsection{1D SOD}
This experiment is the famous SOD problem. The exact deterministic solutions contains a rarefaction, contact and shock waves. In this example, the final time is set to 
$T=0.15$ and the initial data  is given by 
\begin{equation*}
  (\varrho, v, p) (0, x) = \begin{cases}
  (1,0,1), \qquad x <0.5,\\
  (0.125,0, 0.1),\quad x>0.5.
  \end{cases}
\end{equation*}
We use again $4096$ elements and $\Delta t=10^{-5}$. The convergence rates can be seen in Table \ref{FV_SOD} and we recognize that even without the regulartiy of the intitial data, we obtain convergence rates which align with the result itself inside the FV setting ($\approx 0.5$ in  $\mathtt{E}_1$ and  $\approx 1$ in  $\mathtt{E}_2$). 
{\small{
\begin{table}[h!]
\centering
\begin{tabular}{|c|c|c|c|c|}
\hline
Elements & $\mathtt{E}_1$ & EOC($\mathtt{E}_1)$ &$\mathtt{E}_2$ &EOC($\mathtt{E}_2)$ \\ \hline \hline
64 & 0.0378 & - &  0.0203 & - \\ 
128 & 0.0287 & 0.40 & 0.0106 & 0.93 \\
256 & 0.0200 & 0.51 & 0.0052& 1.05 \\
512 & 0.0130 & 0.63 & 0.0022 & 1.20 \\ \hline
av. order & - & 0.51 & -&1.06 \\ \hline
\end{tabular}
\caption{FV method for SOD -  1000 samples}
\label{FV_SOD}
\end{table}

}}
We also illustrate the impact of the $\mu$ parameter and provide examples demonstrating various noise behaviours. Figure \ref{fig:SOD} presents the density profiles for different configurations of the (stochastic) Euler equations, with a zoomed-in view around the shock wave. The simulations utilise the FV framework with $2^{12}$ elements. The solid black line represents the deterministic solution obtained via the explicit Euler method with $\Delta t = 10^{-4}$. For the stochastic equations, the Euler–Maruyama method is applied using the same $\Delta t$.

In the left-hand panels, we display three simulations with distinct noise behaviors, while the right-hand panels compare two simulations with identical noise behavior but different $\mu$ values: one (blue, dotted) and two (red, dash-dotted), alongside the deterministic solution. The parameter $\mu$ significantly influences the deviation of the stochastic solution from the deterministic one, with further analysis on this effect provided in the subsequent subsection.

\begin{figure}[h!]
    \centering
    % First image
        \includegraphics[width=0.49\textwidth]{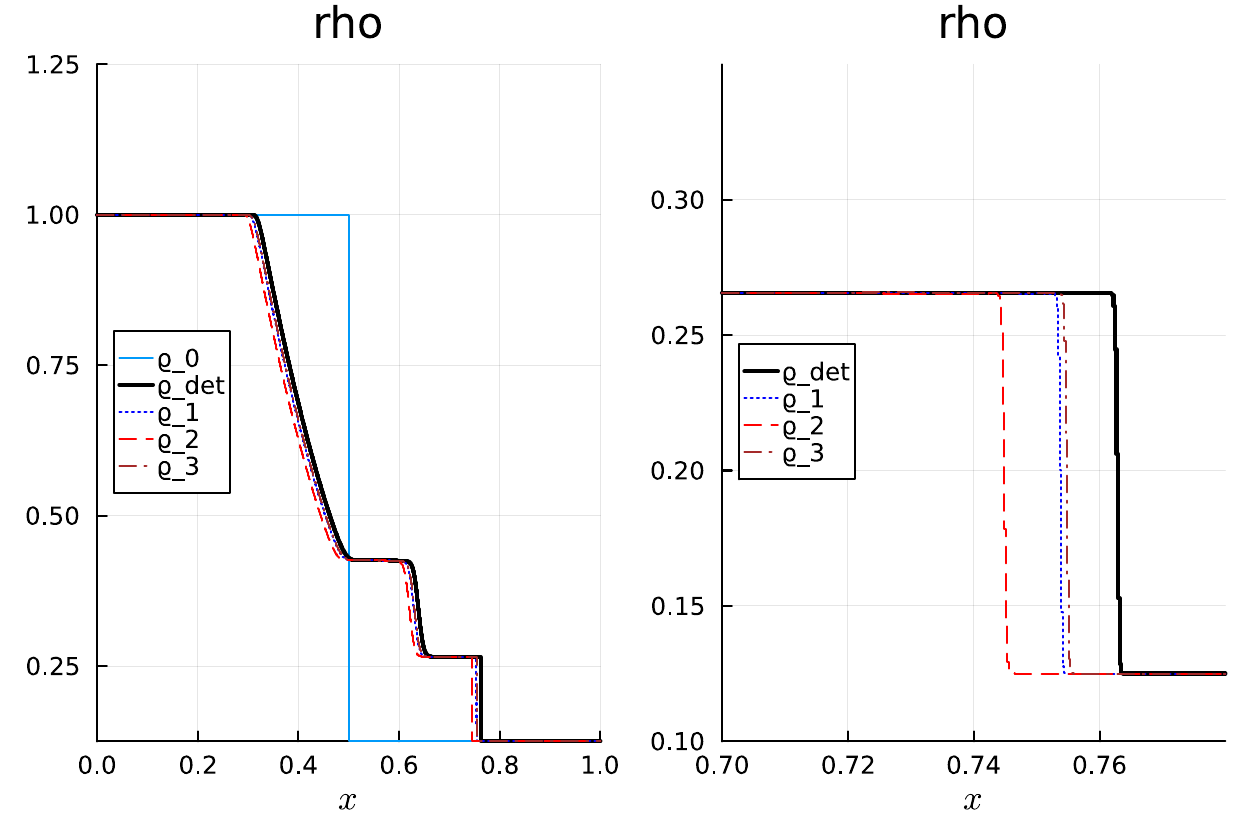}
        \includegraphics[width=0.49\textwidth]{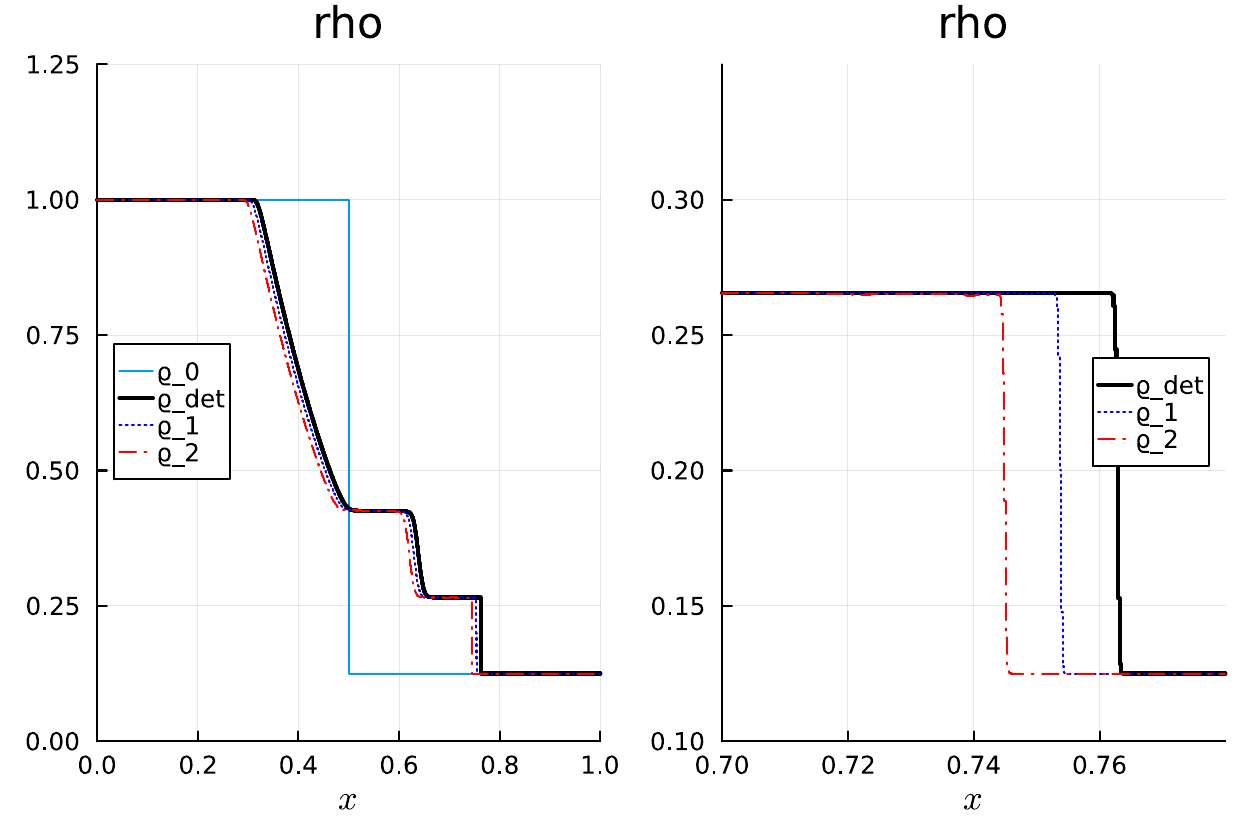}
    \caption{Different noise (left) and different noise strength(right)}
    \label{fig:SOD}
\end{figure}

%(0.3967472236055052, 0.5141338937676279, 0.631983503537148)
%
%julia> error_EOC(error_FV_SOD[2], 4)
%(0.9345332263991575, 1.0458482985040016, 1.2033577910246058)
%
%julia> error_average(error_FV_SOD[2], 4)
%4-element Vector{Float64}:
% 0.0203430930288152
% 0.01064374477313143
% 0.005155404464198407
% 0.0022388033735712075
%
%julia> error_average(error_FV_SOD[1], 4)
%4-element Vector{Float64}:
% 0.03776503856945784
% 0.028685149494435364
% 0.020085719516292788
% 0.012961086303123585

\subsection{Two  dimension}\label{sec_two_d}

We consider  the two-dimensional stochastic compressible Euler equations \eqref{EulerC_short} with the right hand side 

\begin{equation*}
\mathbf{h}(\bU)=\mu^2\varrho\begin{pmatrix}0\\0\\ 0 \\1\end{pmatrix},\quad\mathbf{g}(\bU)\dd W= \mu\begin{pmatrix}0\\\varrho\, \dd \beta_1\\\varrho\,\dd \beta_2 \\m_1\,\dd\beta_1+m_2\,\dd\beta_2\end{pmatrix}.
\end{equation*}
where $\mu$ denotes again  a stochastic forcing factor. The domain is divided into $n\times n$ uniform quads for the elements ($n \in \N$). 

\subsubsection{Convergence  }
Similar to Section \ref{sub_sub_section_1},  we consider now the two dimensional density wave 
given by 
\begin{equation}
  \varrho(0, x) = 1 + 0.5 \sin\bigl(2\cdot x \bigr), \quad v_1(0, x) = 0.1, \quad v_2(0, x) = 0.1, \quad p(0,x)= 10,
\end{equation}
für $t \in [0, 0.1]$ and $x \in [-1, 1]^2$. Here, we use $\mu=0.1$ and calculate everything  with $\Delta t=10^{-4}$. The reference solution is calculated using $256^2$ elements. 
In Table \ref{FV_DG_density_2d}, we see the error behaviour for the FV and DGSEM method with $\mathfrak{p}=1$. We reach first and second order of accuracy which is in line with the
deterministic numerical simulations for a smooth test case, e.g. \cite{oeff2023,gassner2016,zbMATH07517718,chen2017} and references therein. %Meanwhile we do not want to hide that the complexity of  the method 
%\PO{Eine Tabelle mit wie teuer es ist bezüglich nutzen vom deterministischen und stochastischer euler methode... }

%\PO{Anpassen  Convergence FV ((0.5298130751414618, 0.898508753493324, 1.405915839525686))  error_EOC(DG_0_linear[1], 4)
%error_EOC(DG_0_linear[2], 4)
%(0.8018821238609789, 1.4225042854792982, 2.448801181710801)
%
%error_average(DG_0_linear[2], 4)
%4-element Vector{Float64}:
% 11.558923779489271
%  6.63020303198597
%  2.4734911954798386
%  0.4530516951348132
%error_average(DG_0_linear[1], 4)
% 0.2622496077868156
% 0.18164574104409484
% 0.09744221184425808
% 0.036772596335687864

%julia> error_EOC(DG_1_linear[1], 4)
%(1.9663451122551918, 2.0546767450553256, 2.3014678498988137)
%
%julia> error_EOC(DG_1_linear[2], 4)
%(3.7426797821869373, 4.057271892324652, 4.589796333313774)
%
%julia> error_average(DG_1_linear[1], 4)
%4-element Vector{Float64}:
% 0.0365039911672445
% 0.009341389697835456
% 0.0022484962024327763
% 0.00045612229573976844
%
%julia> error_average(DG_1_linear[2], 4)
%4-element Vector{Float64}:
% 0.606945556256199
% 0.04534097527583748
% 0.0027235184227717628
% 0.00011310034951026425

{\small{
\begin{table}[h!]
\centering
\begin{tabular}{|c|c|c|c|c|c|c|c|c|}
\hline
& \multicolumn{4}{c|}{FV} & \multicolumn{4}{c|}{DGSEM ($\mathfrak{p}=1$)} \\
\cline{2-9}
Elements & $\mathtt{E}_1$ & EOC($\mathtt{E}_1$) & $\mathtt{E}_2$ & EOC($\mathtt{E}_2$) & $\mathtt{E}_1$ & EOC($\mathtt{E}_1$) & $\mathtt{E}_2$ & EOC($\mathtt{E}_2$) \\
\hline
$16^2$           &  0.2622  & -        &  11.5589 & -  &         0.0365   & -       &  0.6069   & -\\ 
$32^2$          &   0.1816  & 0.53  &   6.6302  & 0.80  &   0.0093   & 1.97 &  0.0453   &    3.74 \\
$64^2$          &   0.0974  & 0.90  &   2.4735 &  1.42  &  0.0022   & 2.05  &   0.0027  &  4.06  \\
$128^2$        &  0.0368   & 1.41   &   0.4531 &  2.45  &  0.0001   & 2.30  &   0.0001  &  4.59 \\ \hline
average EOC& -              & 0.95 &   -           &  1.52  & -              &  2.11 &  -            & 4.13 \\ \hline
\end{tabular}
\caption{LLF FV and  entropy stable DGSEM ($\mathfrak{p}=1$) methods - error plot using 1000 samples}
\label{FV_DG_density_2d}
\end{table}
}}

\subsubsection{Kelvin-Helmholtz and influence of the noise }
In the second test case in two-space dimensions, we consider the famous Kelvin-Helmholtz  instability test.  
Kelvin-Helmholtz describes a shear flow of three fluid layers with different densities. 
The initial data are given by 
\begin{equation}\label{init_KH}
 (\varrho, v_1, v_2, p)(\bx, 0)=\begin{cases}
                          (2,-0.5,0,2.5), \quad I_1 \leq y \leq I_2,\\
                          (1,0.5,0,2.5),   \text{ otherwise 0},
                         \end{cases}
\end{equation}
where the interface profiles $I_j=I_j(\bx):= J_j+\epsilon Y_j(\bx),\; j=1,2$ and $0<\epsilon\ll1$, are chosen to be small perturbations around the lower $J_1=0.25$ and the upper $J_2=0.75$ interfaces, respectively. Moreover, 
$Y_j= \sum_{m=1}^M a_j^m \cos \left( b_j^m +2\pi m x \right)$, $j=1,2$, with $a_j^m\in [0,1]$ and $b_j^m\in[-\pi, \pi],\; j=1,2,\; m=1,\cdots, M$ are arbitrary but fixed numbers. The coefficients $a_j^m$ have been normalized such that $\sum_{m=1}^M a_j^m=1$ to guarantee  that $|I_j-J_j|\leq \epsilon$ for $j=1,2.$ In the simulations, we have $M=10$, $\epsilon=0.01$ and $T=1.5$. We solve the Kelvin-Helmholtz problem on $[0,1]\times[0,1]$ with periodic boundary conditions. 
It is known that no grid conference can be obtained as can be seen for instance in \cite{Fjordholm2016, feireisl2020convergence,LuOe} and references therein. Vertices rise on different locations and fine scales which can also derive problems in terms of stability for entropy dissipative DG methods \cite{glaubitz2024generalized,ranocha2025robustness}.\\
However, to demonstrate now the differences driven by the noise and also by the forcing factor $\mu$, we print in Figure \ref{fig:main} different realisations using 
the flux differencing DGSEM method with $\mathfrak{p}=2$ on a grid with $64 \times 64$ elements. The time step is set to $\Delta t=10^{-4}$. We are using again explicit Euler in the deterministic case (for comparison) and the Euler–Maruyama in the stochastic setting. 
Subfigure \ref{fig:sub1} depicts the deterministic solution, whereas subfigures \ref{fig:sub2}–\ref{fig:sub3} show two different realizations of the same test case with distinct noise configurations. Subfigures \ref{fig:sub3}–\ref{fig:sub4} illustrate the same problem but with varying noise strengths. These results clearly highlight the significant impact of noise strength on the solution. 
We do not want to hide that if the strength is to high above $1$, we have obtained also stability issues in this test case for the tested realisations. This can possible also happen for other realisations with the smaller strength where the noise can have an stabilisation effect as well. A more detailed investigation of such context will left for future research. 

%\begin{figure}[h!]
%    \centering
%    % First image
%    \begin{subfigures}
%        \includegraphics[width=0.49\textwidth]{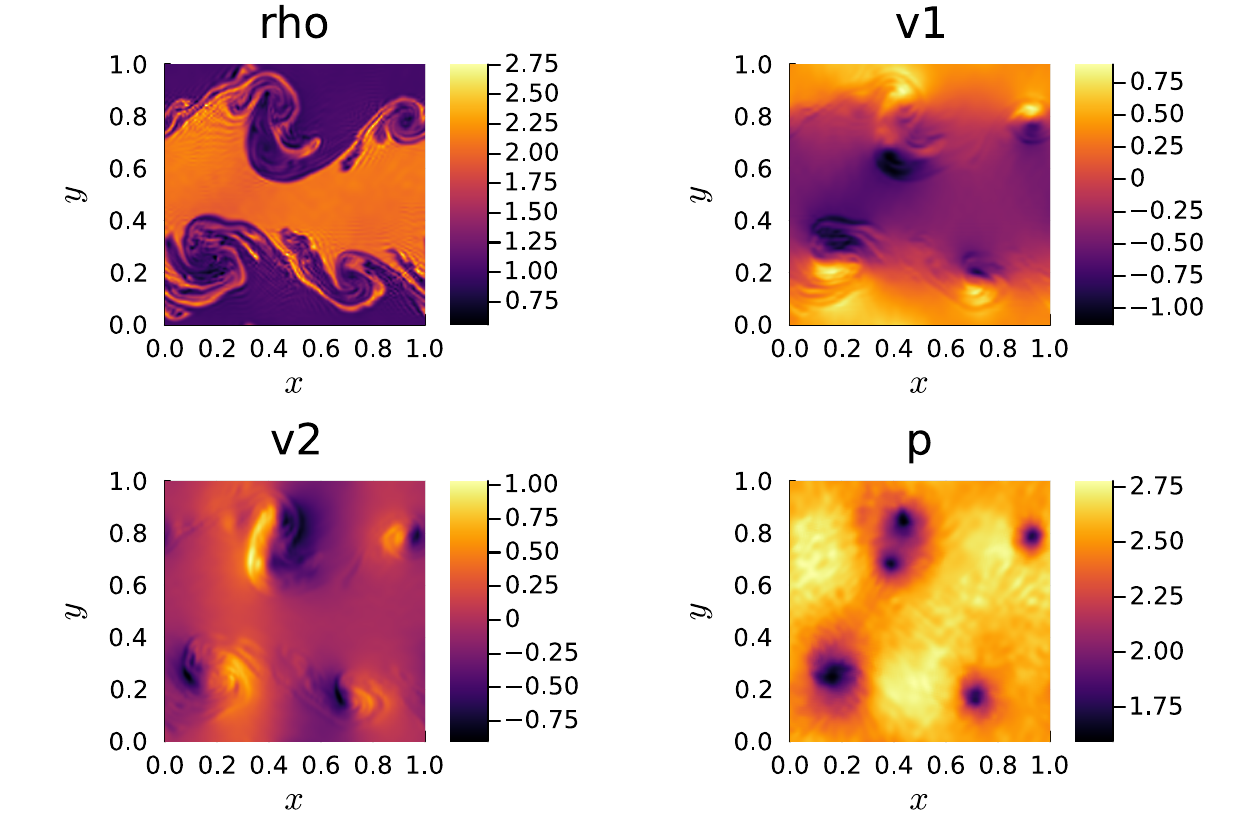}
%        \end{subfigures}
%        \includegraphics[width=0.49\textwidth]{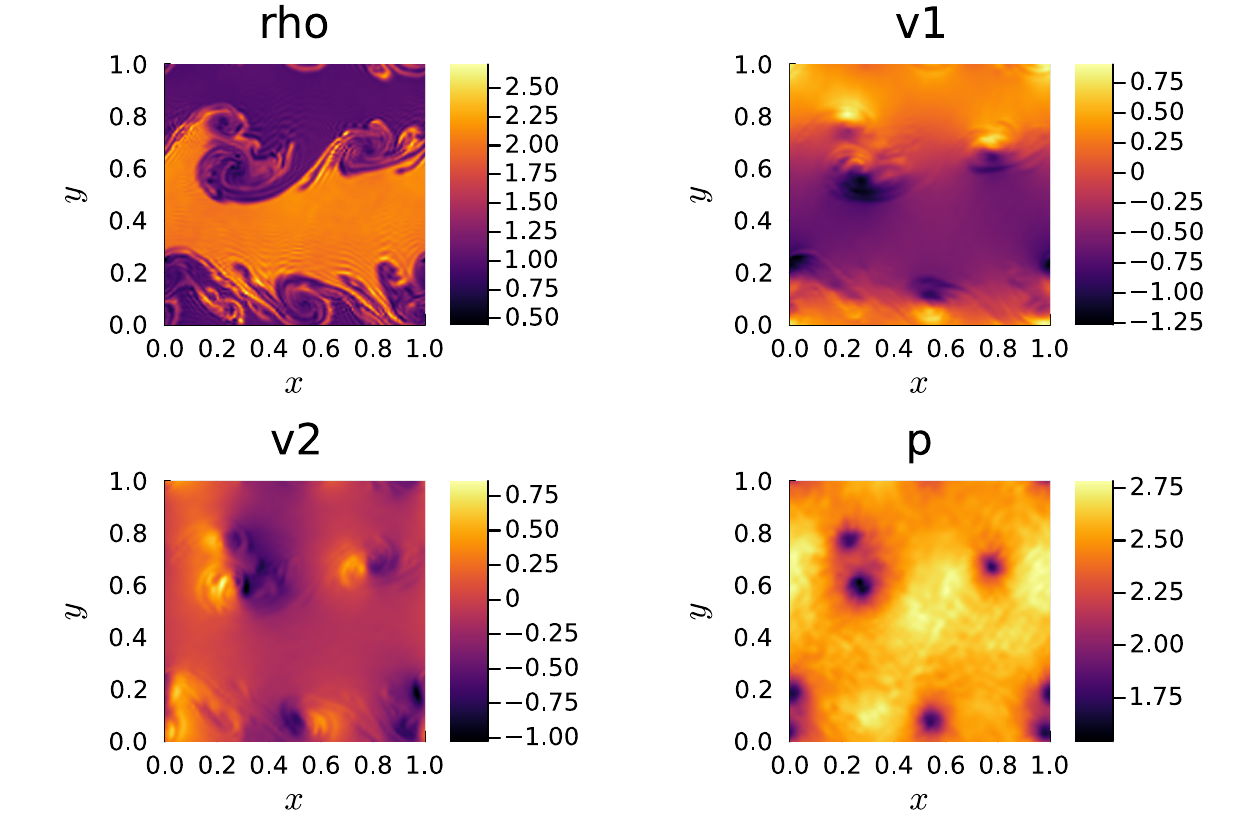}
%         \includegraphics[width=0.49\textwidth]{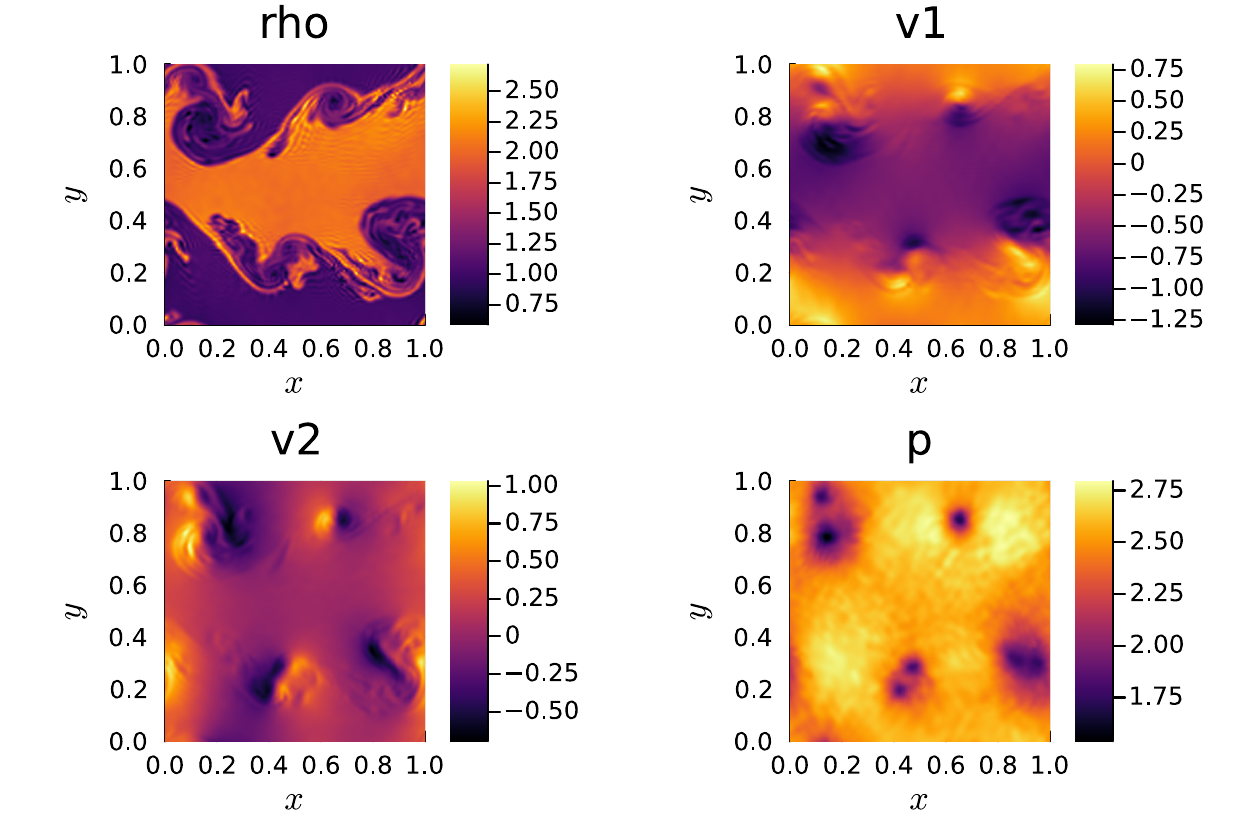}
%          \includegraphics[width=0.49\textwidth]{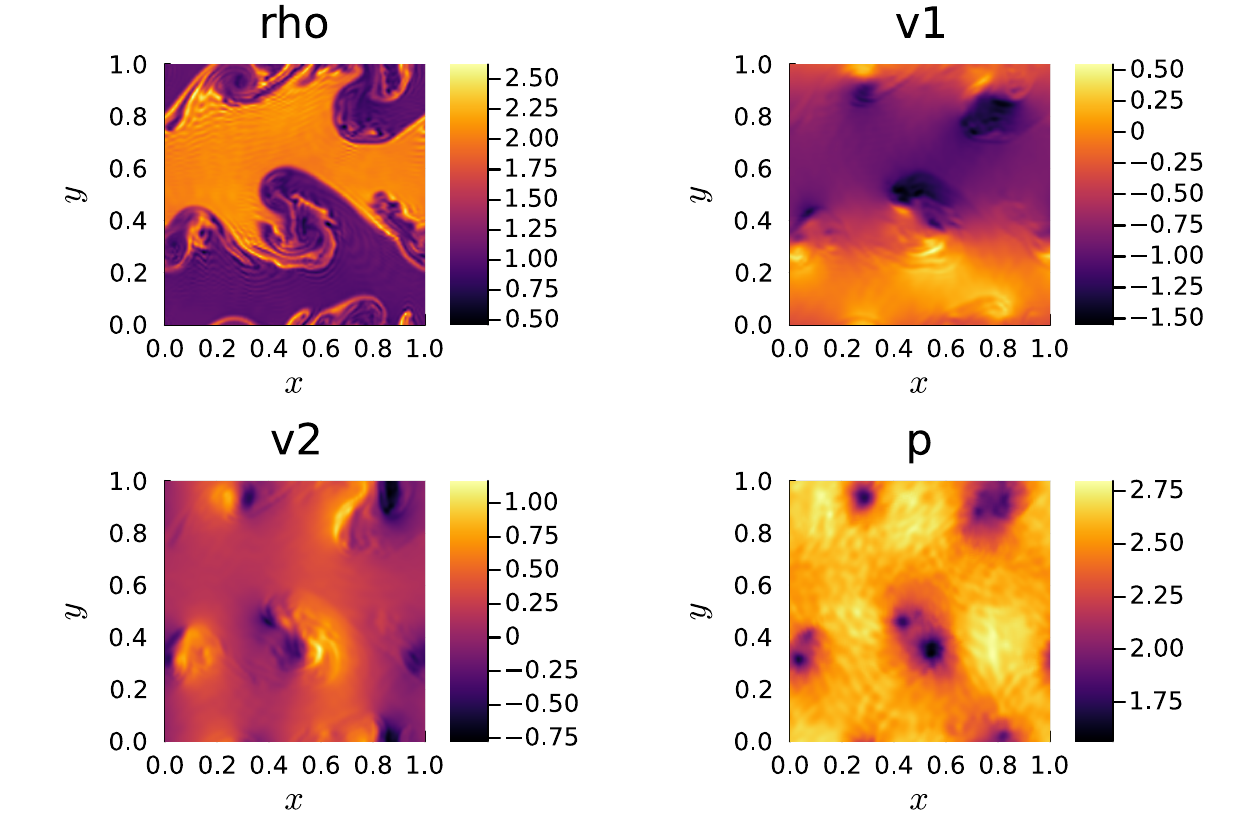}
%    \caption{Different noise (left) and different noise strength(right)}
%    \label{f}
%\end{figure}
%

\begin{figure}[h!]


    \centering
    % First subfigure
    \begin{subfigure}{0.45\textwidth}
        \includegraphics[width=\linewidth]{Figures/Fig_DG_2_grid_6_det.pdf}
        \caption{Deterministic}
        \label{fig:sub1}
    \end{subfigure}
    % Second subfigure
    \begin{subfigure}{0.45\textwidth}
        \includegraphics[width=\linewidth]{Figures/Fig_DG_2_grid_6.pdf}
        \caption{First realization with noise $\mu=0.1$}
        \label{fig:sub2}
    \end{subfigure}
        % Third subfigure
    \begin{subfigure}{0.45\textwidth}
        \includegraphics[width=\linewidth]{Figures/Fig_DG_2_grid_6_2.pdf}
        \caption{Second realization with noise $\mu=0.1$}
        \label{fig:sub3}
    \end{subfigure}
        % Fourth subfigure
    \begin{subfigure}{0.45\textwidth}
        \includegraphics[width=\linewidth]{Figures/Fig_DG_2_grid_6_2_strong.pdf}
        \caption{Second realization with noise $\mu=0.25$}
        \label{fig:sub4}
    \end{subfigure}
    % Caption for the entire figure
    \caption{Kelvin-Helmholtz and influence of the noise }
    \label{fig:main}
\end{figure}

\section{Conclusion and outlook}\label{se_outlock}

In this work, we have extended the entropy-dissipative DGSEM methods including the FV case to solve the stochastic Euler system, demonstrating convergence and providing error estimates across various settings. Our analysis builds on the consistency framework from \cite{LuOe} and the theoretical results in \cite{Mo}, leading to a first convergence result for the stochastic Euler equations. Additionally, we implemented these methods and examined error behavior across multiple experiments as a proof of concept.\\
We have seen that the method is stable and for sufficient smooth solutions (for the deterministic case), we obtain even better convergence rates than we would have expected. 
 This is also in line with the purely deterministic framework where several optimal error estimations for different DG settings are known as references for instance in \cite{zbMATH06698849,Jiao2022,Liu2020Liu,zbMATH07086311}. However, our analysis is relying on the consistency estimation where no regularity have been assumed. In the future, we plan to derive new error estimates using the consistency analysis and transfer it also from the  purely deterministic setting to the stochastic one.
Meanwhile, we have numerically analyzed the stochastic Euler equations using LLF-FV and  DGSEM methods. We observe that LLF-FV methods exhibit error behavior in the stochastic setting similar to that derived in \cite{LuSh} for the deterministic case. Furthermore, our testing of the DGSEM method shows that, with smooth initial data, numerical results resemble those of deterministic case. However, for cases involving discontinuities, we observe oscillations, indicating a need for the development of limiting strategies including stochasticity to address more complex test cases where a first step in this direction was recently done in \cite{Wo}. Future research will focus on these limiting strategies, along with turbulence modeling aspects.

\section*{Acknowledgement}

The work of D.B.
was supported by the German Research Foundation (DFG) within the framework of the priority research program SPP 2410 under the grant BR 4302/3-1 (525608987) and under the personal grant BR 4302/5-1 (543675748).\\
P.Ö. was supported by the DFG within SPP 2410, project OE 661/5-1 
(525866748) and under the personal grant  OE 661/4-1(520756621).\\
P.Ö. thanks Hendrik Ranocha (JGU Mainz) for his assistance in implementing the stochastic forcing term in Trixi.jl.

\section*{Conflict of Interest}
The authors have no conflict of interest to declare.

\bibliographystyle{abbrv}
\bibliography{literature}

\end{document}